\documentclass[10pt, a4paper]{amsart}
\usepackage{amscd,amsmath,amssymb,amsfonts,amsthm,ascmac}
\usepackage{enumerate,mathrsfs,stmaryrd,latexsym, comment, mathtools} 
\usepackage[all]{xy}
\usepackage[top=27truemm,bottom=21truemm,left=26truemm,right=26truemm]{geometry}
\usepackage{graphicx}
\def\r{\mathbb{R}}
\def\c{\mathbb{C}}
\def\q{\mathbb{Q}}
\def\z{\mathbb{Z}}

\newtheorem{thm}{Theorem}[section]
\newtheorem{defi}[thm]{Definition}
\newtheorem{rem}[thm]{Remark}
\newtheorem{prop}[thm]{Proposition}
\newtheorem{ex}[thm]{Example}
\newtheorem{cor}[thm]{Corollary}
\newtheorem{lem}[thm]{Lemma}

\newtheorem*{ack}{Acknowledgements}
\newtheorem*{theor}{Theorem}

 \makeatletter
    
    \@addtoreset{equation}{section}
  \makeatother
\title[FOLDING PROCEDURE FOR NEWTON-OKOUNKOV POLYTOPES OF SCHUBERT VARIETIES]{\fontsize{11pt}{11pt}\selectfont FOLDING PROCEDURE FOR NEWTON-OKOUNKOV POLYTOPES OF SCHUBERT VARIETIES}
\date{\today}
\author[N. Fujita]{\fontsize{10pt}{10pt}\selectfont Naoki Fujita}
\begin{document}
\address{Department of Mathematics, Tokyo Institute of Technology, 2-12-1 Oh-okayama, Meguro-ku, Tokyo 152-8551, Japan}
\email{fujita.n.ac@m.titech.ac.jp}
\subjclass[2010]{Primary 17B37; Secondary 05E10, 14M15, 14M25}
\keywords{Newton-Okounkov body, Schubert variety, Crystal basis, Orbit Lie algebra, Fixed point Lie subalgebra}
\thanks{The work was partially supported by Grant-in-Aid for JSPS Fellows (No.\ 16J00420)}
\begin{abstract}
The theory of Newton-Okounkov polytopes is a generalization of that of Newton polytopes for toric varieties, and it gives a systematic method of constructing toric degenerations of a projective variety. In the case of Schubert varieties, their Newton-Okounkov polytopes are deeply connected with representation theory. Indeed, Littelmann's string polytopes and Nakashima-Zelevinsky's polyhedral realizations are obtained as Newton-Okounkov polytopes of Schubert varieties. In this paper, we apply the folding procedure to a Newton-Okounkov polytope of a Schubert variety, which relates Newton-Okounkov polytopes of Schubert varieties of different types. As an application of this result, we obtain a new interpretation of Kashiwara's similarity of crystal bases.
\end{abstract}
\maketitle
\setcounter{tocdepth}{2}
\tableofcontents
\section{Introduction}
This paper is devoted to the study of the folding procedure for a Newton-Okounkov polytope of a Schubert variety. The theory of Newton-Okounkov polytopes was introduced by Okounkov \cite{Oko1, Oko2}, and afterward developed independently by Kaveh-Khovanskii \cite{KK1} and by Lazarsfeld-Mustata \cite{LM}. It is a generalization of the theory of Newton polytopes for toric varieties to arbitrary projective varieties, and it gives a systematic method of constructing toric degenerations by \cite[Theorem 1]{And} (see also \cite{HK}). In the case of Schubert varieties, their Newton-Okounkov polytopes include some representation-theoretic polytopes such as Littelmann's string polytopes \cite{Kav}, Nakashima-Zelevinsky's polyhedral realizations \cite{FN}, and Feigin-Fourier-Littelmann-Vinberg's polytopes \cite{FeFL, Kir}; in addition, Lusztig's parametrization of the canonical basis also appears in the theory of Newton-Okounkov polytopes (see \cite{FaFL}). In this paper, we study Littelmann's string polytopes and Nakashima-Zelevinsky's polyhedral realizations, and obtain relations among these polytopes for Schubert varieties of different types.

To be more precise, let $\mathfrak{g}$ be a simply-laced simple Lie algebra, $\mathfrak{t} \subset \mathfrak{g}$ a Cartan subalgebra, $P_+ \subset \mathfrak{t}^\ast$ the set of dominant weights for $\mathfrak{g}$, and $\omega \colon I \rightarrow I$ a Dynkin diagram automorphism, where $I$ is an index set for the vertices of the Dynkin diagram. In this paper, for technical reasons, we always assume that any two vertices of the Dynkin diagram in the same $\omega$-orbit are not joined. Such an $\omega$ induces a Lie algebra automorphism $\omega \colon \mathfrak{g} \xrightarrow{\sim} \mathfrak{g}$, which preserves the Cartan subalgebra $\mathfrak{t}$. We know that the fixed point Lie subalgebra $\mathfrak{g}^\omega \coloneqq \{x \in \mathfrak{g} \mid \omega(x) = x\}$ is also a simple Lie algebra. Fix a complete set $\breve{I}$ of representatives for the $\omega$-orbits in $I$; the set $\breve{I}$ is identified with an index set for the vertices of the Dynkin diagram of $\mathfrak{g}^\omega$. Then, there exists a natural injective group homomorphism $\Theta \colon \breve{W} \hookrightarrow W$ from the Weyl group of $\mathfrak{g}^\omega$ to that of $\mathfrak{g}$. If ${\bf i} = (i_1, \ldots, i_r) \in \breve{I}^r$ is a reduced word for $w \in \breve{W}$, then \[\Theta({\bf i}) \coloneqq (i_{1, 1}, \ldots, i_{1, m_{i_1}}, \ldots, i_{r, 1}, \ldots, i_{r, m_{i_r}}) \in I^{m_{i_1} + \cdots + m_{i_r}}\] is a reduced word for $\Theta(w)$, where we set $m_i \coloneqq \min \{k \in \z_{>0} \mid \omega^k(i) = i\}$ for $i \in \breve{I}$ and $i_{k, l} \coloneqq \omega^{l-1}(i_k)$ for $1 \le k \le r$, $1 \le l \le m_{i_k}$. Let $\omega^\ast \colon \mathfrak{t}^\ast \xrightarrow{\sim} \mathfrak{t}^\ast$ be the dual of the $\mathbb{C}$-linear automorphism $\omega \colon \mathfrak{t} \xrightarrow{\sim} \mathfrak{t}$, and set $(\mathfrak{t}^\ast)^0 \coloneqq \{\lambda \in \mathfrak{t}^\ast \mid \omega^\ast(\lambda) = \lambda\}$. Note that an element $\lambda \in P_+ \cap (\mathfrak{t}^\ast)^0$ naturally induces a weight $\hat{\lambda}$ for $\mathfrak{g}^\omega$. Now, for $w \in \breve{W}$ and $\lambda \in P_+ \cap (\mathfrak{t}^\ast)^0$, let $X(w)$ (resp., $X(\Theta(w))$) be the corresponding Schubert variety, and $\mathcal{L}_{\hat{\lambda}}$ (resp., $\mathcal{L}_\lambda$) the corresponding line bundle on $X(w)$ (resp., $X(\Theta(w))$). Also, let $\Delta_{\bf i} ^{(\hat{\lambda}, w)}, \Delta_{\Theta({\bf i})} ^{(\lambda, \Theta(w))}$ (resp., $\widetilde{\Delta}_{\bf i} ^{(\hat{\lambda}, w)}, \widetilde{\Delta}_{\Theta({\bf i})} ^{(\lambda, \Theta(w))}$) denote Littelmann's string polytopes (resp., Nakashima-Zelevinsky's polyhedral realizations) corresponding to $w \in \breve{W}$ and $\lambda \in P_+ \cap (\mathfrak{t}^\ast)^0$; see Definition \ref{d:representation-theoretic polytopes} for the definitions. Kaveh \cite{Kav} (resp., the author and Naito \cite{FN}) proved that 
\begin{align*}
&\Delta_{\bf i} ^{(\hat{\lambda}, w)} = -\Delta(X(w), \mathcal{L}_{\hat{\lambda}}, v_{\bf i}, \tau_{\hat{\lambda}}),\ \Delta_{\Theta({\bf i})} ^{(\lambda, \Theta(w))} = -\Delta(X(\Theta(w)), \mathcal{L}_\lambda, v_{\Theta({\bf i})}, \tau_\lambda)\\
({\rm resp.},\ &\widetilde{\Delta}_{\bf i} ^{(\hat{\lambda}, w)} = -\Delta(X(w), \mathcal{L}_{\hat{\lambda}}, \tilde{v}_{\bf i}, \tau_{\hat{\lambda}}),\ \widetilde{\Delta}_{\Theta({\bf i})} ^{(\lambda, \Theta(w))} = -\Delta(X(\Theta(w)), \mathcal{L}_\lambda, \tilde{v}_{\Theta({\bf i})}, \tau_\lambda)) 
\end{align*}
for specific valuations $v_{\bf i}, v_{\Theta({\bf i})}$ (resp., $\tilde{v}_{\bf i}, \tilde{v}_{\Theta({\bf i})}$) and specific sections $\tau_{\hat{\lambda}}, \tau_\lambda$, where the sets on the right-hand side of these equations denote the corresponding Newton-Okounkov polytopes (see Definitions \ref{d:valuations} and \ref{d:Newton-Okounkov polytopes} for the definitions). The following is the main result of this paper.

\vspace{2mm}\begin{theor}
Define an $\mathbb{R}$-linear surjective map $\Omega_{\bf i} = \Omega_{\bf i} ^{(\omega)} \colon \mathbb{R}^{m_{i_1} + \cdots + m_{i_r}} \twoheadrightarrow \mathbb{R}^r$ by \[\Omega_{\bf i}(a_{1, 1}, \ldots, a_{1, m_{i_1}}, \ldots, a_{r, 1}, \ldots, a_{r, m_{i_r}}) \coloneqq (a_{1, 1} + \cdots + a_{1, m_{i_1}}, \ldots, a_{r, 1} + \cdots + a_{r, m_{i_r}}).\] Then the following equalities hold$:$
\begin{align*}
&\Omega_{\bf i}(\Delta(X(\Theta(w)), \mathcal{L}_\lambda, v_{\Theta({\bf i})}, \tau_\lambda)) = \Delta(X(w), \mathcal{L}_{\hat{\lambda}}, v_{\bf i}, \tau_{\hat{\lambda}}),\ {\it and}\\
&\Omega_{\bf i}(\Delta(X(\Theta(w)), \mathcal{L}_\lambda, \tilde{v}_{\Theta({\bf i})}, \tau_\lambda)) = \Delta(X(w), \mathcal{L}_{\hat{\lambda}}, \tilde{v}_{\bf i}, \tau_{\hat{\lambda}}).
\end{align*} 
\end{theor}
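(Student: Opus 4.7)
By the Kaveh and Fujita--Naito identifications recalled above, each of the four Newton--Okounkov polytopes in the statement equals, up to the sign $x\mapsto -x$ that commutes with the $\mathbb{R}$-linear map $\Omega_{\bf i}$, the corresponding string polytope or Nakashima--Zelevinsky polyhedral realization. The theorem therefore reduces to proving
\[
\Omega_{\bf i}\bigl(\Delta_{\Theta({\bf i})}^{(\lambda,\Theta(w))}\bigr)=\Delta_{\bf i}^{(\hat{\lambda},w)},\qquad \Omega_{\bf i}\bigl(\widetilde{\Delta}_{\Theta({\bf i})}^{(\lambda,\Theta(w))}\bigr)=\widetilde{\Delta}_{\bf i}^{(\hat{\lambda},w)}.
\]
Each of these polytopes is the closure of the convex hull of the string (respectively $*$-string) parametrizations of the elements of the appropriate Demazure crystal, and $\Omega_{\bf i}$ commutes with closure and convex hull; so I reduce to the set-level equality $\Omega_{\bf i}(\Phi_{\Theta({\bf i})}(B_{\Theta(w)}(\lambda)))=\Phi_{\bf i}(B_w(\hat{\lambda}))$ together with its $*$-string analogue, where $\Phi_{\bf i}$ denotes the ${\bf i}$-string parametrization and $B_w(\cdot)$ the Demazure crystal.

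For the inclusion $\subseteq$, the hypothesis that no two vertices of the Dynkin diagram in the same $\omega$-orbit are joined forces $\tilde{e}_j$ and $\tilde{e}_{j'}$ to commute on $B(\lambda)$ for $j,j'$ in a common orbit, and to preserve each other's $\varepsilon$-function. The computation of the $\Theta({\bf i})$-string of any $\tilde b\in B(\lambda)$ therefore decomposes block by block, and the $k$-th block sum $a_{k,1}+\cdots+a_{k,m_{i_k}}$ gains intrinsic meaning as a total orbit-operator exponent. Invoking Kashiwara's similarity $S\colon B(\hat{\lambda})\hookrightarrow B(\lambda)$, which identifies $\tilde{f}_{i_k}^{\mathfrak{g}^\omega}$ on $B(\hat{\lambda})$ with $\prod_l \tilde{f}_{\omega^{l-1}(i_k)}^{\mathfrak{g}}$ on $S(B(\hat{\lambda}))$ and restricts to an embedding $B_w(\hat{\lambda})\hookrightarrow B_{\Theta(w)}(\lambda)$, I would then identify these block sums with the coordinates of the ${\bf i}$-string of a well-defined element of $B_w(\hat{\lambda})$. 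The $*$-string case is handled identically after applying Kashiwara's $*$-involution.

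\textbf{Main obstacle.} The reverse inclusion $\supseteq$ is the hard part. Kashiwara's similarity produces only highly symmetric $\Theta({\bf i})$-strings, constant within each orbit block, whose $\Omega_{\bf i}$-images are confined to the scaled sublattice $\{(m_{i_1}x_1,\ldots,m_{i_r}x_r)\}$; one must therefore exhibit many further elements of $B_{\Theta(w)}(\lambda)$, outside the image of $S$, whose $\Theta({\bf i})$-string block sums realize an arbitrary point of $\Phi_{\bf i}(B_w(\hat{\lambda}))$ while respecting the Demazure condition on the $\mathfrak{g}$-side (cut out by $\Theta(w)$) and matching a prescribed ${\bf i}$-string on the $\mathfrak{g}^\omega$-side (cut out by $w$). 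An alternative, perhaps cleaner, route bypasses crystals entirely: the natural inclusion $X(w)\hookrightarrow X(\Theta(w))$ induced by $\mathfrak{g}^\omega\hookrightarrow\mathfrak{g}$ gives $\mathcal{L}_\lambda|_{X(w)}=\mathcal{L}_{\hat{\lambda}}$, and in compatible Bott--Samelson open charts it restricts to the diagonal embedding $(t_1,\ldots,t_r)\mapsto(t_1,\ldots,t_1,\ldots,t_r,\ldots,t_r)$ with each $t_k$ repeated $m_{i_k}$ times; a monomial exponent $(a_{k,l})$ on $X(\Theta(w))$ then pulls back to $\Omega_{\bf i}((a_{k,l}))$ on $X(w)$, and the two claimed polytope inclusions follow from the behavior of $v_{\bf i}$ and $\tilde{v}_{\bf i}$ under restriction combined with the surjectivity of the restriction map $H^0(X(\Theta(w)),\mathcal{L}_\lambda^{\otimes n})\twoheadrightarrow H^0(X(w),\mathcal{L}_{\hat{\lambda}}^{\otimes n})$.
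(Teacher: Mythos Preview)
Your proposal has a fundamental confusion and a genuine gap, and misses the paper's key idea entirely.

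\textbf{Confusion of orbit versus fixed-point Lie algebras.} The crystal embedding you invoke, sending $\tilde f_{i_k}$ to $\prod_l \tilde f_{\omega^{l-1}(i_k)}$, realizes the crystal of the \emph{orbit} Lie algebra $\breve{\mathfrak g}$ inside $\mathcal B(\lambda)$ as the $\omega$-fixed points (this is the map $\Upsilon_{\bf i}$ in the paper). But $\hat\lambda$ lives on the \emph{fixed-point} Lie subalgebra $\mathfrak g^\omega$, which is Langlands dual to $\breve{\mathfrak g}$ and is not the same object. There is no crystal embedding $\mathcal B^{\mathfrak g^\omega}(\hat\lambda)\hookrightarrow \mathcal B^{\mathfrak g}(\lambda)$ of the kind you describe; the map $\Omega_{\bf i}$ is not induced by any crystal morphism. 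So your argument for the inclusion $\subseteq$ via crystals does not get off the ground.

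\textbf{The geometric route needs positivity.} Your alternative, restricting sections along $X(w)\hookrightarrow X(\Theta(w))$ in Bott--Samelson coordinates, is indeed how the paper proves $\subseteq$, but you skip the essential point. When one substitutes $t_{k,l}\mapsto t_k$, distinct monomials collapse to the same monomial and may cancel; moreover the highest monomial for $v_{\Theta({\bf i})}$ need not map to the highest for $v_{\bf i}$. The paper resolves this by working with the dual perfect basis $\{\Xi^{\rm up}_{\Theta(w)}(b)\}$ and using the positivity of the lower global basis (conditions $(\mathrm P)_1,(\mathrm P)_2$) to guarantee that all coefficients are nonnegative real numbers, so no cancellation occurs (Lemma~\ref{l:positivity lemma} and Theorem~\ref{Key proposition 1}). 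Surjectivity of the restriction on global sections does not rescue this: even if $\sigma$ restricts to a given section on $X(w)$, you have no control over $v_{\Theta({\bf i})}(\sigma)$ versus $v_{\bf i}(\sigma|_{X(w)})$ without such positivity.

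\textbf{The reverse inclusion requires a new idea.} You correctly flag $\supseteq$ as the hard part but do not supply an argument. The paper's mechanism is to introduce a second simply-laced Lie algebra $\mathfrak g'$ with automorphism $\omega'$ such that $(\mathfrak g')^{\omega'}\simeq \breve{\mathfrak g}$ and the orbit sizes satisfy $m_i\cdot m'_i=L$ for all $i$. One then has four maps $\Upsilon_{\bf i}^{(\omega)},\Omega_{\bf i}^{(\omega)},\Upsilon_{\bf i}^{(\omega')},\Omega_{\bf i}^{(\omega')}$ cycling through polytopes for $\mathfrak g,\mathfrak g^\omega,\mathfrak g',\breve{\mathfrak g}$, and their composite around the square equals $L\cdot{\rm id}_{\mathbb R^r}$. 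Since each $\Upsilon$ lands in the correct polytope by crystal theory (Corollary~\ref{c:folding of polytopes(crystal)}) and each $\Omega$ lands in the correct polytope by the positivity argument (Corollary~\ref{inclusion property}), the composite being a bijection onto $L\cdot\Delta(X(w),\mathcal L_{\hat\lambda},\ldots)=\Delta(X(w),\mathcal L_{L\hat\lambda},\ldots)$ forces the individual $\Omega_{\bf i}$ to be surjective. Nothing in your outline approaches this.
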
\vspace{2mm}

In our proof of the theorem above, we use another simply-laced simple Lie algebra $\mathfrak{g}^\prime$ having a Dynkin diagram automorphism $\omega^\prime \colon I^\prime \rightarrow I^\prime$ satisfying the following conditions:
\begin{enumerate}
\item[${\rm (C)}_1$] the fixed point Lie subalgebra $(\mathfrak{g}')^{\omega'}$ is isomorphic to the orbit Lie algebra $\breve{\mathfrak{g}}$ associated to $\omega$; this condition implies that the index set $\breve{I}$ for $\breve{\mathfrak{g}}$ is identified with an index set $\breve{I}^\prime\ (= \breve{(I^\prime)})$ for $(\mathfrak{g}')^{\omega'}$;
\item[${\rm (C)}_2$] if we set $m_i ^\prime \coloneqq \min\{k \in \mathbb{Z}_{>0} \mid (\omega^\prime)^k(i) = i\}$, $i \in \breve{I}^\prime$, then the product $L \coloneqq m_i \cdot m_i ^\prime$ is independent of the choice of $i \in \breve{I} \simeq \breve{I}^\prime$.
\end{enumerate}
Let ${\bf i} = (i_1, \ldots, i_r) \in \breve{I}^r \simeq (\breve{I}^\prime)^r$ be a reduced word. It is known that $P_+ \cap (\mathfrak{t}^\ast)^0$ is identified with the set of dominant weights for the orbit Lie algebra $\breve{\mathfrak{g}}$ associated to $\omega$; let $\breve{\lambda}$ denote the dominant weight for $\breve{\mathfrak{g}}$ corresponding to $\lambda \in P_+ \cap (\mathfrak{t}^\ast)^0$. Now we define an $\mathbb{R}$-linear injective map $\Upsilon_{\bf i} = \Upsilon_{\bf i} ^{(\omega)} \colon \mathbb{R}^r \hookrightarrow \mathbb{R}^{m_{i_1} + \cdots + m_{i_r}}$ by 
\[\Upsilon_{\bf i}(a_1, \ldots, a_r) \coloneqq (\underbrace{a_1, \ldots, a_1}_{m_{i_1}}, \ldots, \underbrace{a_r, \ldots, a_r}_{m_{i_r}}).\] 
By using the theory of crystal bases, we see that Littelmann's string polytope (resp., Nakashima-Zelevinsky's polyhedral realization) for $\breve{\mathfrak{g}}$ with respect to $\breve{\lambda}$ and ${\bf i}$ is identified with a slice of $\Delta_{\Theta({\bf i})} ^{(\lambda, \Theta(w))}$ (resp., $\widetilde{\Delta}_{\Theta({\bf i})} ^{(\lambda, \Theta(w))}$) through $\Upsilon_{\bf i}$ (see Corollary \ref{c:folding of polytopes(crystal)} for more details). Hence we obtain the following diagram:
\begin{align*}
\xymatrix{                   
      & \r^{m_{i_1} + \cdots + m_{i_r}} \ar@{->>}[rd]^-{\Omega_{\bf i} ^{(\omega)}} &   \\
\r^r \ar@{^{(}->}[ru]^-{\Upsilon_{\bf i} ^{(\omega)}} &                            & \ar@{^{(}->}[ld]^-{\Upsilon_{\bf i} ^{(\omega^\prime)}} \r^r,\\
                         & \ar@{->>}[lu]^-{\Omega_{\bf i} ^{(\omega^\prime)}} \r^{m_{i_1} ^\prime + \cdots +m_{i_r} ^\prime}&   
} 
\end{align*}
in which the composite maps $\Omega_{\bf i} ^{(\omega)} \circ \Upsilon_{\bf i} ^{(\omega)} \circ \Omega_{\bf i} ^{(\omega^\prime)} \circ \Upsilon_{\bf i} ^{(\omega^\prime)}$ and $\Omega_{\bf i} ^{(\omega^\prime)} \circ \Upsilon_{\bf i} ^{(\omega^\prime)} \circ \Omega_{\bf i} ^{(\omega)} \circ \Upsilon_{\bf i} ^{(\omega)}$ are both identical to $L \cdot {\rm id}_{\r^r}$, where $L$ is the positive integer in ${\rm (C)}_2$. This diagram plays an important role in our proof of the Theorem above. If $\mathfrak{g}$ is of type $A_{2n-1}$ and $\omega$ is its Dynkin diagram automorphism of order two, then $\mathfrak{g}^\omega$ is of type $C_n$ and $(\mathfrak{g}^\prime, \omega^\prime)$ is given uniquely by the pair of the simple Lie algebra of type $D_{n+1}$ and its Dynkin diagram automorphism of order two; the fixed point Lie subalgebra $(\mathfrak{g}^\prime)^{\omega^\prime}$ is of type $B_n$. Thus the diagram above relates Newton-Okounkov polytopes of Schubert varieties of types $A$, $B$, $C$, and $D$. A remarkable fact is that the composite map $\Omega_{\bf i} \circ \Upsilon_{\bf i}$ is identical to the map coming from a similarity of crystal bases. This gives a new interpretation of the similarity of crystal bases in terms of the folding procedure.

This paper is organized as follows. In Section 2, we recall some basic facts about Littelmann's string polytopes and Nakashima-Zelevinsky's polyhedral realizations. In Section 3, we review main results of \cite{FN} and \cite{Kav}. Section 4 is devoted to the study of the folding procedure for crystal bases. In Section 5, we prove the Theorem above. In Section 6, we study the relation with a similarity of crystal bases. Finally, we mention that our arguments in this paper are naturally extended to symmetrizable Kac-Moody algebras; in Appendix A, we give the list of nontrivial pairs of automorphisms of simply-laced affine Dynkin diagrams satisfying conditions ${\rm (C)}_1$ and ${\rm (C)}_2$ above.

\vspace{2mm}\begin{ack}\normalfont
The author is greatly indebted to his supervisor Satoshi Naito for fruitful discussions and numerous helpful suggestions. The author would also like to thank Hironori Oya for suggesting the relation with a similarity of crystal bases.
\end{ack}\vspace{2mm}

\section{Littelmann's string polytopes and Nakashima-Zelevinsky's polyhedral realizations}

In this section, we consider Littelmann's string polytopes and Nakashima-Zelevinsky's polyhedral realizations, which are the main objects of our study. We first recall some basic facts about crystal bases, following \cite{Kas1, Kas2, Kas3, Kas4}. Let $G$ be a connected, simply-connected simple algebraic group over $\c$, $\mathfrak{g}$ its Lie algebra, $W$ the Weyl group, $T \subset G$ a maximal torus, and $I$ an index set for the vertices of the Dynkin diagram of $\mathfrak{g}$. Let $\mathfrak{t} \subset \mathfrak{g}$ denote the Lie algebra of $T$, $\mathfrak{t}^\ast \coloneqq {\rm Hom}_\mathbb{C} (\mathfrak{t}, \mathbb{C})$ the dual space of $\mathfrak{t}$, and $\langle \cdot, \cdot \rangle \colon \mathfrak{t}^\ast \times \mathfrak{t} \rightarrow \mathbb{C}$ the canonical pairing. Denote by $P \subset \mathfrak{t}^\ast$ the weight lattice for $\mathfrak{g}$, by $P_+ \subset P$ the set of dominant integral weights, by $\{\alpha_i \mid i \in I\} \subset \mathfrak{t}^\ast$ the set of simple roots, and by $\{h_i \mid i \in I\} \subset \mathfrak{t}$ the set of simple coroots. For an indeterminate $q$, we define $q_i \in \q(q)$, $i \in I$, by:
\begin{align*}
&q_i = \begin{cases}
q^3\quad{\rm if}\ \mathfrak{g}\ {\rm is\ of\ type}\ G_2\ {\rm and}\ \alpha_i\ {\rm is\ a\ long\ root},\\
q^2\quad{\rm if}\ \mathfrak{g}\ {\rm is\ of\ type}\ B_n, C_n,\ n \ge 2,\ {\rm or}\ F_4,\ {\rm and}\ \alpha_i\ {\rm is\ a\ long\ root},\\
q\quad\ {\rm otherwise}.
\end{cases}
\end{align*}
Let $U_q(\mathfrak{g})$ be the quantized enveloping algebra of $\mathfrak{g}$ over $\mathbb{Q}(q)$ with generators $\{e_i, f_i, t_i, t_i ^{-1} \mid i \in I\}$, and $U_q (\mathfrak{u}^-)$ the $\mathbb{Q}(q)$-subalgebra of $U_q(\mathfrak{g})$ generated by $\{f_i \mid i \in I\}$. Denote by $\mathcal{B}(\infty)$ the crystal basis of $U_q (\mathfrak{u}^-)$ with $b_\infty \in \mathcal{B}(\infty)$ the element corresponding to $1 \in U_q (\mathfrak{u}^-)$, and by $\tilde{e}_i, \tilde{f}_i \colon \mathcal{B}(\infty) \cup \{0\} \rightarrow \mathcal{B}(\infty) \cup \{0\}$ for $i \in I$ the Kashiwara operators. 

\vspace{2mm}\begin{defi}\normalfont
Define a $\mathbb{Q}(q)$-algebra anti-involution $\ast$ on $U_q (\mathfrak{g})$ by: \[e_i ^\ast = e_i,\ f_i ^\ast = f_i,\ t_i ^\ast = t_i ^{-1}\] for $i \in I$; we see by \cite[Theorem 2.1.1]{Kas4} that this induces an involution $\ast \colon \mathcal{B}(\infty) \rightarrow \mathcal{B}(\infty)$, called {\it Kashiwara's involution}. 
\end{defi}\vspace{2mm}

For $\lambda \in P_+$, denote by $V_q(\lambda)$ the irreducible highest weight $U_q(\mathfrak{g})$-module with highest weight $\lambda$ over $\mathbb{Q}(q)$, and by $v_{q, \lambda} \in V_q(\lambda)$ the highest weight vector. Let $\mathcal{B}(\lambda)$ denote the crystal basis of $V_q(\lambda)$ with $b_\lambda \in \mathcal{B}(\lambda)$ the element corresponding to $v_{q, \lambda} \in V_q(\lambda)$, and $\tilde{e}_i, \tilde{f}_i \colon \mathcal{B}(\lambda) \cup \{0\} \rightarrow \mathcal{B}(\lambda) \cup \{0\}$ for $i \in I$ the Kashiwara operators. Define maps $\varepsilon_i, \varphi_i \colon \mathcal{B}(\infty) \rightarrow \mathbb{Z}$ and $\varepsilon_i, \varphi_i \colon \mathcal{B}(\lambda) \rightarrow \mathbb{Z}$ for $i \in I$ by 
\begin{align*}
&\varepsilon_i(b) \coloneqq \max\{k \in \mathbb{Z}_{\ge 0} \mid \tilde{e}_i ^k b \neq 0\},\ \varphi_i (b) \coloneqq \varepsilon_i (b) + \langle{\rm wt}(b), h_i\rangle\ {\rm for}\ b \in \mathcal{B}(\infty),\ {\rm and}\\
&\varepsilon_i(b) \coloneqq \max\{k \in \mathbb{Z}_{\ge 0} \mid \tilde{e}_i ^k b \neq 0\},\ \varphi_i (b) \coloneqq \max\{k \in \mathbb{Z}_{\ge 0} \mid \tilde{f}_i ^k b \neq 0\}\ {\rm for}\ b \in \mathcal{B}(\lambda).
\end{align*}

\vspace{2mm}\begin{prop}[{\cite[Theorem 5]{Kas2}}]
For $\lambda \in P_+$, let $\pi_\lambda \colon U_q(\mathfrak{u}^-) \twoheadrightarrow V_q(\lambda)$ denote the surjective $U_q(\mathfrak{u}^-)$-module homomorphism given by $u \mapsto u v_{q, \lambda}$.
\begin{enumerate}
\item[{\rm (1)}] The homomorphism $\pi_\lambda$ induces a surjective map $\mathcal{B}(\infty) \twoheadrightarrow \mathcal{B}(\lambda) \cup \{0\}$ $($denoted also by $\pi_\lambda)$. For \[\widetilde{\mathcal{B}}(\lambda) \coloneqq \{b \in \mathcal{B}(\infty) \mid \pi_\lambda(b) \neq 0\},\] the restriction map $\pi_\lambda \colon \widetilde{\mathcal{B}}(\lambda) \rightarrow \mathcal{B}(\lambda)$ is bijective.
\item[{\rm (2)}] $\tilde{f}_i \pi_\lambda(b) = \pi_\lambda (\tilde{f}_i b)$ for all $i \in I$ and $b \in \mathcal{B}(\infty)$.
\item[{\rm (3)}] $\tilde{e}_i \pi_\lambda(b) = \pi_\lambda (\tilde{e}_i b)$ for all $i \in I$ and $b \in \widetilde{\mathcal{B}}(\lambda)$. 
\item[{\rm (4)}] $\varepsilon_i (\pi_\lambda(b)) = \varepsilon_i (b)$ and $\varphi_i (\pi_\lambda (b)) = \varphi_i (b) + \langle\lambda, h_i\rangle$ for all $i \in I$ and $b \in \widetilde{\mathcal{B}}(\lambda)$. 
\end{enumerate}
\end{prop}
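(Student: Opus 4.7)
The plan is to derive all four assertions from a single compatibility statement at the level of crystal lattices. Let $A\subset\mathbb{Q}(q)$ denote the localization at $q=0$, and let $\mathcal{L}(\infty)\subset U_q(\mathfrak{u}^-)$ and $\mathcal{L}(\lambda)\subset V_q(\lambda)$ be the crystal lattices, defined as the $A$-submodules generated by all monomials $\tilde{f}_{i_1}\cdots\tilde{f}_{i_n}\cdot 1$ and $\tilde{f}_{i_1}\cdots\tilde{f}_{i_n}v_{q,\lambda}$, respectively. The central technical step is to prove $\pi_\lambda(\mathcal{L}(\infty))=\mathcal{L}(\lambda)$ together with the intertwining relation $\overline{\pi}_\lambda\circ\tilde{f}_i=\tilde{f}_i\circ\overline{\pi}_\lambda$ on the induced map $\overline{\pi}_\lambda\colon\mathcal{L}(\infty)/q\mathcal{L}(\infty)\to\mathcal{L}(\lambda)/q\mathcal{L}(\lambda)$. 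I would establish this by appealing to the $f_i$-string decompositions $U_q(\mathfrak{u}^-)=\bigoplus_{k\ge 0}f_i^{(k)}\ker e_i'$ and $V_q(\lambda)=\bigoplus_{k\ge 0}f_i^{(k)}\ker e_i$ underlying the Kashiwara operators; since $\pi_\lambda$ is a $U_q(\mathfrak{u}^-)$-module map, it commutes with multiplication by $f_i^{(k)}$, and a direct computation shows it sends $\ker e_i'$ into $\ker e_i$ modulo $q\mathcal{L}(\lambda)$, so the two decompositions are intertwined. Since $\tilde{f}_i$ acts by shifting the index $k$ by one, assertion (2) drops out.

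Assertion (1) is then immediate: every $b\in\mathcal{B}(\infty)$ is of the form $\tilde{f}_{i_1}\cdots\tilde{f}_{i_n}b_\infty$ modulo $q\mathcal{L}(\infty)$, and commuting $\overline{\pi}_\lambda$ past the $\tilde{f}_i$'s gives $\pi_\lambda(b)=\tilde{f}_{i_1}\cdots\tilde{f}_{i_n}b_\lambda\in\mathcal{B}(\lambda)\cup\{0\}$; surjectivity onto $\mathcal{B}(\lambda)\cup\{0\}$ is clear since $\mathcal{B}(\lambda)$ is generated from $b_\lambda$ by the $\tilde{f}_i$'s. For bijectivity of $\pi_\lambda$ on $\widetilde{\mathcal{B}}(\lambda)$, I would argue by raising both candidates simultaneously to the highest weight: if $\pi_\lambda(b)=\pi_\lambda(b')\neq 0$, a sequence $\tilde{e}_{i_1}\cdots\tilde{e}_{i_n}$ carrying $\pi_\lambda(b)$ up to $b_\lambda$ also carries both $b$ and $b'$ up to $b_\infty$ (using part (3) below), forcing $b=b'$.

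For part (3), suppose $b\in\widetilde{\mathcal{B}}(\lambda)$ and $\tilde{e}_ib\neq 0$; set $b'':=\tilde{e}_ib$, so $\tilde{f}_ib''=b$ in $\mathcal{B}(\infty)$. Applying $\pi_\lambda$ and using (2): $\tilde{f}_i\pi_\lambda(b'')=\pi_\lambda(b)\neq 0$, so $\pi_\lambda(b'')\neq 0$ and $\tilde{e}_i\pi_\lambda(b)=\pi_\lambda(b'')=\pi_\lambda(\tilde{e}_ib)$. If $\tilde{e}_ib=0$, one needs $\tilde{e}_i\pi_\lambda(b)=0$ as well; this drops out of the $\varepsilon_i$-preservation in (4), which I prove by iterating (3): $\tilde{e}_i^k\pi_\lambda(b)=\pi_\lambda(\tilde{e}_i^kb)$ for all $0\le k\le\varepsilon_i(b)$, each nonzero, giving $\varepsilon_i(\pi_\lambda(b))\ge\varepsilon_i(b)$; conversely, if $\tilde{e}_i^{\varepsilon_i(b)+1}b=0$, the same compatibility forces $\tilde{e}_i^{\varepsilon_i(b)+1}\pi_\lambda(b)=0$. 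The $\varphi_i$-identity then follows by combining $\varepsilon_i(\pi_\lambda(b))=\varepsilon_i(b)$, the weight shift ${\rm wt}(\pi_\lambda(b))=\lambda+{\rm wt}(b)$, and the standard relation $\varphi_i=\varepsilon_i+\langle{\rm wt}(\cdot),h_i\rangle$ valid in both $\mathcal{B}(\infty)$ and the integrable crystal $\mathcal{B}(\lambda)$.

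The main obstacle is the initial lattice-level compatibility $\overline{\pi}_\lambda\tilde{f}_i=\tilde{f}_i\overline{\pi}_\lambda$: because $\tilde{f}_i$ is not a genuine algebra operator but is defined via the $f_i$-string decomposition, commutativity with the module map $\pi_\lambda$ is not formal. The crux is the verification that $\pi_\lambda(\ker e_i')\subset\ker e_i$ modulo $q\mathcal{L}(\lambda)$, which in Kashiwara's original argument is handled by exploiting the commutator identity relating $e_i$, $f_i$, and $e_i'$ inside $U_q(\mathfrak{g})$, together with an inductive control of weights. Once this compatibility is in place, the remaining four assertions unfold essentially formally, as sketched above.
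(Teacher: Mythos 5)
The paper does not actually prove this proposition: it is quoted verbatim from \cite[Theorem 5]{Kas2}, so the only available comparison is with Kashiwara's original argument, whose overall architecture your sketch does reproduce (crystal-lattice compatibility of $\pi_\lambda$ and $\tilde{f}_i$-intertwining modulo $q$, followed by formal deductions of (1)--(4)). As a proof, however, it has a genuine gap exactly where Kashiwara's ``grand loop'' induction does the real work. You assert that ``a direct computation'' shows $\pi_\lambda(\ker e_i')\subset\ker e_i$ modulo $q\mathcal{L}(\lambda)$. It does not: for $u\in\ker e_i'$ of weight $-\xi$ one gets $e_i(u\,v_{q,\lambda})$ proportional to $(e_i''u)\,v_{q,\lambda}$ with a scalar of rough size $q_i^{\langle\lambda,h_i\rangle+1}$, and to conclude that this lies in $q\mathcal{L}(\lambda)$ one must already know that $(e_i''u)v_{q,\lambda}$, and more generally the interaction of the two $i$-string decompositions with the lattices, is bounded with respect to $\mathcal{L}(\lambda)$ --- the lattices are stable under $\tilde{e}_i,\tilde{f}_i$ but not under $e_i,f_i,e_i''$, so there is no formal reason for such boundedness. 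This is precisely the content of the simultaneous induction on weights that occupies the bulk of \cite{Kas2}; your closing paragraph concedes the point but still imports it rather than proving it, so the central step of the proposal is missing.

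There is also a circularity in the way you close the remaining cases. The case $\tilde{e}_ib=0$ of (3), equivalently the inequality $\varepsilon_i(\pi_\lambda(b))\le\varepsilon_i(b)$ in (4), is claimed to be ``forced by the same compatibility,'' but the $\tilde{f}_i$-intertwining alone does not force it: a priori $\pi_\lambda$ could send the top of an $i$-string of $\mathcal{B}(\infty)$ into the interior of an $i$-string of $\mathcal{B}(\lambda)$, and nothing in your argument excludes this. The natural way to exclude it is to invoke injectivity of $\pi_\lambda$ on $\widetilde{\mathcal{B}}(\lambda)$, but you prove that injectivity using (3), whose problematic case rests on (4) --- so (1), (3), (4) chase one another unless the $\varepsilon_i$-preservation is established independently, which again is exactly what Kashiwara's induction provides (proved jointly with the existence and good behaviour of the crystal lattices and bases). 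The parts of your argument that are genuinely formal --- surjectivity in (1), the recovery $b=\tilde{f}_{i_1}\cdots\tilde{f}_{i_n}b_\infty$ in the injectivity step, the easy direction of (3), and the $\varphi_i$-identity from $\varphi_i=\varepsilon_i+\langle\mathrm{wt}(\cdot),h_i\rangle$ and the weight shift --- are fine; but the two items above are substantive missing content, not routine verifications, so the proposal should be regarded as a correct skeleton of the known proof rather than a proof.
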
\vspace{2mm}

\begin{defi}\normalfont
Let ${\bf i} = (i_1, \ldots, i_r) \in I^r$ be a reduced word for $w \in W$, and $\lambda \in P_+$. By \cite[Propositions 3.2.3 and 3.2.5]{Kas4}, the subsets 
\begin{align*}
&\mathcal{B}_w(\infty) \coloneqq \{\tilde{f}_{i_1} ^{a_1} \cdots \tilde{f}_{i_r} ^{a_r} b_\infty \mid a_1, \ldots, a_r \in \mathbb{Z}_{\ge 0}\} \subset \mathcal{B}(\infty)\ {\rm and}\\
&\mathcal{B}_w(\lambda) \coloneqq \{\tilde{f}_{i_1} ^{a_1} \cdots \tilde{f}_{i_r} ^{a_r} b_\lambda \mid a_1, \ldots, a_r \in \mathbb{Z}_{\ge 0}\} \setminus \{0\} \subset \mathcal{B}(\lambda)
\end{align*}
are independent of the choice of a reduced word ${\bf i}$. These subsets $\mathcal{B}_w (\infty), \mathcal{B}_w(\lambda)$ are called {\it Demazure crystals}. 
\end{defi}\vspace{2mm}

\begin{prop}[{see \cite[Proposition 3.2.5]{Kas4}}]\label{Demazure crystal}
For $\lambda \in P_+$ and $w \in W$, the equality $\pi_\lambda(\mathcal{B}_w(\infty)) = \mathcal{B}_w(\lambda) \cup \{0\}$ holds$;$ hence $\pi_\lambda$ induces a bijective map $\pi_\lambda \colon \widetilde{\mathcal{B}}_w(\lambda) \rightarrow \mathcal{B}_w(\lambda)$, where $\widetilde{\mathcal{B}}_w(\lambda) \coloneqq \mathcal{B}_w(\infty) \cap \widetilde{\mathcal{B}}(\lambda)$.
\end{prop}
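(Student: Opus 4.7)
The plan is to reduce the proposition to a direct bookkeeping consequence of the previous proposition (parts (1) and (2)), using the concrete description of the Demazure subsets in terms of strings of lowering operators.

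First I would fix a reduced word $\mathbf{i} = (i_1, \ldots, i_r)$ for $w$ and recall that, by definition, $b_\infty \in \mathcal{B}(\infty)$ corresponds to $1 \in U_q(\mathfrak{u}^-)$, so $\pi_\lambda(b_\infty)$ corresponds to $v_{q,\lambda} \in V_q(\lambda)$ and therefore $\pi_\lambda(b_\infty) = b_\lambda$. Next, I would invoke part (2) of the previous proposition, which gives the intertwining property $\tilde{f}_i \circ \pi_\lambda = \pi_\lambda \circ \tilde{f}_i$ on $\mathcal{B}(\infty)$. Iterating this identity yields
\[
\pi_\lambda\bigl(\tilde{f}_{i_1}^{a_1} \cdots \tilde{f}_{i_r}^{a_r} b_\infty\bigr) = \tilde{f}_{i_1}^{a_1} \cdots \tilde{f}_{i_r}^{a_r} b_\lambda
\]
for all tuples $(a_1, \ldots, a_r) \in \mathbb{Z}_{\ge 0}^r$. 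Taking the union over all such tuples and reading off the image gives exactly $\pi_\lambda(\mathcal{B}_w(\infty)) = \mathcal{B}_w(\lambda) \cup \{0\}$, which is the first assertion.

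For the second assertion, I would appeal to part (1) of the previous proposition: the restriction $\pi_\lambda \colon \widetilde{\mathcal{B}}(\lambda) \to \mathcal{B}(\lambda)$ is a bijection. Its further restriction to $\widetilde{\mathcal{B}}_w(\lambda) = \mathcal{B}_w(\infty) \cap \widetilde{\mathcal{B}}(\lambda)$ is automatically injective, so it suffices to verify surjectivity onto $\mathcal{B}_w(\lambda)$. Given $b \in \mathcal{B}_w(\lambda)$, the first assertion produces some $\tilde{b} \in \mathcal{B}_w(\infty)$ with $\pi_\lambda(\tilde{b}) = b$; since $b \neq 0$, we have $\tilde{b} \in \widetilde{\mathcal{B}}(\lambda)$, and therefore $\tilde{b} \in \widetilde{\mathcal{B}}_w(\lambda)$ as required.

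There is essentially no obstacle here, since every ingredient is already in place; the only point that warrants care is making sure the intertwining relation is applied from the outside in (so that the induction on the length of the string of $\tilde{f}_{i_k}$'s is valid even when intermediate elements become $0$, which is handled by the convention $\pi_\lambda(0) = 0$ and by the fact that $\tilde{f}_i$ annihilates $0$). Everything else is a direct unwinding of the definitions of $\mathcal{B}_w(\infty)$, $\mathcal{B}_w(\lambda)$, and $\widetilde{\mathcal{B}}_w(\lambda)$.
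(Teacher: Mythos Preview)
Your argument is correct and is the natural direct proof from the previous proposition; the paper itself does not give a proof of this statement but simply cites it from Kashiwara's work \cite[Proposition 3.2.5]{Kas4}. One small clarification: on the $\mathcal{B}(\infty)$ side the intermediate elements $\tilde{f}_{i_k}^{a_k}\cdots\tilde{f}_{i_r}^{a_r} b_\infty$ are never $0$ (this is a standard feature of $\mathcal{B}(\infty)$), so the intertwining relation from part (2) always applies; your remark about ``intermediate elements becoming $0$'' really concerns only the $\mathcal{B}(\lambda)$ side, where indeed the conventions $\pi_\lambda(0)=0$ and $\tilde{f}_i(0)=0$ make the displayed identity valid verbatim.
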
\vspace{2mm}

In the theory of crystal bases, it is important to give their concrete parametrizations. In this paper, we use two parametrizations: Littelmann's string parametrization and the Kashiwara embedding.

\vspace{2mm}\begin{defi}\normalfont
Let ${\bf i} = (i_1, \ldots, i_r) \in I^r$ be a reduced word for $w \in W$, and $b \in \mathcal{B}_w(\infty)$. Define $\Phi_{\bf i} (b) = (a_1, \ldots, a_r) \in \mathbb{Z}_{\ge 0} ^r$ by
\begin{align*}
&a_1 \coloneqq \max\{a \in \mathbb{Z}_{\ge 0} \mid \tilde{e}_{i_1} ^a b \neq 0\},\\
&a_2 \coloneqq \max\{a \in \mathbb{Z}_{\ge 0} \mid \tilde{e}_{i_2} ^a \tilde{e}_{i_1} ^{a_1} b \neq 0\},\\
&\ \vdots\\
&a_r \coloneqq \max\{a \in \mathbb{Z}_{\ge 0} \mid \tilde{e}_{i_r} ^a \tilde{e}_{i_{r-1}} ^{a_{r-1}} \cdots \tilde{e}_{i_1} ^{a_1} b \neq 0\}.
\end{align*}
The $\Phi_{\bf i}(b)$ is called {\it Littelmann's string parametrization} of $b$ with respect to ${\bf i}$ (see \cite[Sect.\ 1]{Lit}). 
\end{defi}\vspace{2mm}

By \cite[Proposition 3.3.1]{Kas4}, we have $\mathcal{B}_w(\infty)^\ast = \mathcal{B}_{w^{-1}}(\infty)$; hence the map $\Phi_{{\bf i}^{\rm op}} \circ \ast \colon \mathcal{B}_w(\infty) \rightarrow \mathbb{Z}_{\ge 0} ^r$ is well-defined, where ${\bf i}^{\rm op} \coloneqq (i_r, \ldots, i_1)$ is a reduced word for $w^{-1}$. 

\vspace{2mm}\begin{defi}\normalfont
Let ${\bf i} = (i_1, \ldots, i_r) \in I^r$ be a reduced word for $w \in W$. Define a map $\Psi_{\bf i} \colon \mathcal{B}_w(\infty) \rightarrow \mathbb{Z}_{\ge 0} ^r$ by $\Psi_{\bf i}(b) \coloneqq \Phi_{{\bf i}^{\rm op}}(b^\ast)^{\rm op}$ for $b \in \mathcal{B}_w(\infty)$, where ${\bf a}^{\rm op} \coloneqq (a_r, \ldots, a_1)$ for ${\bf a} = (a_1, \ldots, a_r) \in \mathbb{Z}_{\ge 0} ^r$. The map $\Psi_{\bf i}$ is called the {\it Kashiwara embedding} of $\mathcal{B}_w(\infty)$ (see \cite[Sects.\ 2 and 3]{Kas4}).
\end{defi}

\vspace{2mm}\begin{rem}\normalfont\label{parametrization for modules}
By the bijective map $\pi_\lambda \colon \widetilde{\mathcal{B}}_w(\lambda) \xrightarrow{\sim} \mathcal{B}_w(\lambda)$ in Proposition \ref{Demazure crystal}, the maps $\Phi_{\bf i}$ and $\Psi_{\bf i}$ can be thought of as ones from $\mathcal{B}_w(\lambda)$, called {\it Littelmann's string parametrization} of $\mathcal{B}_w(\lambda)$ and the {\it Kashiwara embedding} of $\mathcal{B}_w(\lambda)$, respectively.
\end{rem}

\vspace{2mm}\begin{defi}\normalfont\label{d:representation-theoretic polytopes}
Let ${\bf i} = (i_1, \ldots, i_r) \in I^r$ be a reduced word for $w \in W$, and $\lambda \in P_+$. Define a subset $\mathcal{S}_{\bf i} ^{(\lambda, w)} \subset \mathbb{Z}_{>0} \times \mathbb{Z}^r$ by \[\mathcal{S}_{\bf i} ^{(\lambda, w)} \coloneqq \bigcup_{k>0} \{(k, \Phi_{\bf i}(b)) \mid b \in \widetilde{\mathcal{B}}_w (k\lambda)\},\] and denote by $\mathcal{C}_{\bf i} ^{(\lambda, w)} \subset \mathbb{R}_{\ge 0} \times \mathbb{R}^r$ the smallest real closed cone containing $\mathcal{S}_{\bf i} ^{(\lambda, w)}$. Then, we define a subset $\Delta_{\bf i} ^{(\lambda, w)} \subset \mathbb{R}^r$ by \[\Delta_{\bf i} ^{(\lambda, w)} \coloneqq \{{\bf a} \in \mathbb{R}^r \mid (1, {\bf a}) \in \mathcal{C}_{\bf i} ^{(\lambda, w)}\}.\] This subset $\Delta_{\bf i} ^{(\lambda, w)}$ is called {\it Littelmann's string polytope} for $\mathcal{B}_w(\lambda)$ with respect to ${\bf i}$ (see \cite[Definition 3.5]{Kav} and \cite[Sect.\ 1]{Lit}). Also, by replacing $\Phi_{\bf i}$ with $\Psi_{\bf i}$ in the definitions of $\mathcal{S}_{\bf i} ^{(\lambda, w)}$, $\mathcal{C}_{\bf i} ^{(\lambda, w)}$, and $\Delta_{\bf i} ^{(\lambda, w)}$, we obtain $\widetilde{\mathcal{S}}_{\bf i} ^{(\lambda, w)} \subset \mathbb{Z}_{>0} \times \mathbb{Z}^r$, $\widetilde{\mathcal{C}}_{\bf i} ^{(\lambda, w)} \subset \mathbb{R}_{\ge 0} \times \mathbb{R}^r$, and $\widetilde{\Delta}_{\bf i} ^{(\lambda, w)} \subset \mathbb{R}^r$. We call the subset $\widetilde{\Delta}_{\bf i} ^{(\lambda, w)}$ {\it Nakashima-Zelevinsky's polytope} for $\mathcal{B}_w(\lambda)$ with respect to ${\bf i}$ (see \cite[Sect.\ 2.3]{FN}, \cite[Sects.\ 3 and 4]{Nak1}, \cite[Sect.\ 3.1]{Nak2}, and \cite[Sect.\ 3]{NZ}).
\end{defi}\vspace{2mm}

A subset $\mathcal{C} \subset \mathbb{R}_{\ge 0} \times \mathbb{R}^r$ is said to be a {\it rational convex polyhedral cone} if there exists a finite number of rational points ${\bf a}_1, \ldots, {\bf a}_l \in \mathbb{Q}_{\ge 0} \times \mathbb{Q}^r$ such that $\mathcal{C} = \mathbb{R}_{\ge 0}{\bf a}_1 + \cdots + \mathbb{R}_{\ge 0} {\bf a}_l$. A subset $\Delta \subset \mathbb{R}^r$ is said to be a {\it rational convex polytope} if it is the convex hull of a finite number of rational points.

\vspace{2mm}\begin{prop}[{see \cite[Sect.\ 3.2 and Theorem 3.10]{BZ}, \cite[Corollary 4.3]{FN} and \cite[Sect.\ 1]{Lit}}]\label{string lattice points}
Let ${\bf i} = (i_1, \ldots, i_r) \in I^r$ be a reduced word for $w \in W$, and $\lambda \in P_+$. 
\begin{enumerate}
\item[{\rm (1)}] The real closed cones $\mathcal{C}_{\bf i} ^{(\lambda, w)}$ and $\widetilde{\mathcal{C}}_{\bf i} ^{(\lambda, w)}$ are both rational convex polyhedral cones$;$ in addition, the following equalities hold$:$ \[\mathcal{S}_{\bf i} ^{(\lambda, w)} = \mathcal{C}_{\bf i} ^{(\lambda, w)} \cap (\mathbb{Z}_{>0} \times \mathbb{Z}^r),\ \widetilde{\mathcal{S}}_{\bf i} ^{(\lambda, w)} = \widetilde{\mathcal{C}}_{\bf i} ^{(\lambda, w)} \cap (\mathbb{Z}_{>0} \times \mathbb{Z}^r).\]
\item[{\rm (2)}] The sets $\Delta_{\bf i} ^{(\lambda, w)}$ and $\widetilde{\Delta}_{\bf i} ^{(\lambda, w)}$ are both rational convex polytopes$;$ in addition, the following equalities hold$:$ \[\Phi_{\bf i} (\widetilde{\mathcal{B}}_w(\lambda)) = \Delta_{\bf i} ^{(\lambda, w)} \cap \mathbb{Z}^r,\ \Psi_{\bf i} (\widetilde{\mathcal{B}}_w(\lambda)) = \widetilde{\Delta}_{\bf i} ^{(\lambda, w)} \cap \mathbb{Z}^r.\]
\end{enumerate}
\end{prop}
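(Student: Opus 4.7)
The plan is to reduce the proposition to a combination of the results cited in its statement, and then deduce part (2) from part (1) by a slicing argument.

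For part (1), I would first handle the string side ($\mathcal{C}_{\bf i}^{(\lambda, w)}$). By \cite[Theorem 3.10]{BZ}, the image $\Phi_{\bf i}(\mathcal{B}(\infty))$ is the set of integer points of an explicit rational convex polyhedral cone in $\mathbb{R}^r$, cut out by linear inequalities (``$i$-trails''). The Demazure restriction $\mathcal{B}_w(\infty) \subset \mathcal{B}(\infty)$, via Kashiwara's description in \cite[Proposition 3.2.5]{Kas4} together with the fact that ${\bf i}$ is a reduced word for $w$, amounts in string coordinates to the additional non-negativity conditions $a_j \ge 0$, hence to a rational polyhedral sub-cone. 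Finally, the dominant-weight condition $b \in \widetilde{\mathcal{B}}(k\lambda)$ is translated into a family of linear inequalities in the pair $(k, (a_1, \ldots, a_r))$ by the explicit formulas of \cite[Sect.\ 1]{Lit} (essentially $a_j \le \langle s_{i_{j-1}} \cdots s_{i_1} (k\lambda), h_{i_j}\rangle$, which is linear in $k$ for fixed $\lambda$). Intersecting the three systems of inequalities inside $\mathbb{R}_{\ge 0} \times \mathbb{R}^r$ yields a rational polyhedral cone whose integer points are exactly $\mathcal{S}_{\bf i}^{(\lambda, w)}$, so this cone coincides with the closure $\mathcal{C}_{\bf i}^{(\lambda, w)}$.

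For the Kashiwara-embedding side ($\widetilde{\mathcal{C}}_{\bf i}^{(\lambda, w)}$), the same scheme works verbatim but with \cite[Corollary 4.3]{FN} in place of \cite{BZ, Lit}: that reference gives both the rational polyhedral structure of the ambient cone for $\Psi_{\bf i}(\mathcal{B}_w(\infty))$ and the additional linear inequalities coming from the dominance condition, and identifies the integer points with $\Psi_{\bf i}(\widetilde{\mathcal{B}}_w(k\lambda))$ for each $k$.

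Part (2) then follows by slicing at height one. Since $\mathcal{C}_{\bf i}^{(\lambda, w)}$ is a rational polyhedral cone in $\mathbb{R}_{\ge 0} \times \mathbb{R}^r$, its intersection with the affine hyperplane $\{1\} \times \mathbb{R}^r$ is a rational convex polytope, naturally identified with $\Delta_{\bf i}^{(\lambda, w)}$. The lattice-point identity in part (1) gives
\[
\mathcal{S}_{\bf i}^{(\lambda, w)} \cap (\{1\} \times \mathbb{Z}^r) = \{(1, \Phi_{\bf i}(b)) \mid b \in \widetilde{\mathcal{B}}_w(\lambda)\},
\]
which upon projecting to $\mathbb{R}^r$ yields $\Phi_{\bf i}(\widetilde{\mathcal{B}}_w(\lambda)) = \Delta_{\bf i}^{(\lambda, w)} \cap \mathbb{Z}^r$. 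The same slicing argument applies to $\widetilde{\mathcal{C}}_{\bf i}^{(\lambda, w)}$ and $\widetilde{\Delta}_{\bf i}^{(\lambda, w)}$.

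The main obstacle, and the substantive content that is genuinely imported from the literature rather than formal, is the first step: showing simultaneously that (a) the string (resp.\ Kashiwara-embedding) image on $\mathcal{B}(\infty)$ is the full lattice of a rational polyhedral cone (the ``saturation'' property), and (b) the additional conditions imposed by $\mathcal{B}_w$ and by $\widetilde{\mathcal{B}}(k\lambda)$ are simultaneously expressible by \emph{rational linear} inequalities in $(k, a_1, \ldots, a_r)$. Once these two points are granted, the rest of the argument is purely polyhedral.
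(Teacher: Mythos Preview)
The paper does not actually give a proof of this proposition; it is stated as a known result, with the references \cite{BZ}, \cite{FN}, and \cite{Lit} supplied in the header. Your proposal is a faithful reconstruction of how the statement is assembled from those references---polyhedrality and saturation from \cite{BZ, Lit} on the string side and from \cite{FN} on the Kashiwara-embedding side, followed by the formal height-one slice to pass from (1) to (2)---so it is consistent with what the paper is implicitly invoking. One small caution: in the paper's conventions $\Phi_{\bf i}$ is only defined on $\mathcal{B}_w(\infty)$ (not all of $\mathcal{B}(\infty)$), and the explicit form of the Littelmann inequalities you wrote is not quite the standard one; but neither point affects the logic, since all you need is that the constraints are rational linear in $(k, a_1, \ldots, a_r)$, which is what the cited sources provide.
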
\vspace{2mm}

\begin{rem}\normalfont
By \cite[Theorem 3.10]{BZ} and \cite[Sect.\ 1]{Lit}, we obtain a system of explicit linear inequalities defining Littelmann's string polytope $\Delta_{\bf i} ^{(\lambda, w)}$. In addition, under a certain positivity assumption on ${\bf i}$, Nakashima \cite{Nak1, Nak2} gave a system of explicit linear inequalities defining Nakashima-Zelevinsky's polytope $\widetilde{\Delta}_{\bf i} ^{(\lambda, w)}$ (see also \cite[Corollary 5.3]{FN}).
\end{rem}\vspace{2mm}

\begin{rem}\normalfont
In \cite{FN, FO}, the polytope $\widetilde{\Delta}_{\bf i} ^{(\lambda, w)}$ is called Nakashima-Zelevinsky's polyhedral realization. However, the word ``polyhedral realization'' is originally used in \cite{Nak1, Nak2, NZ} to mean the realization of a crystal basis as the lattice points in an explicit rational convex polyhedral cone or an explicit rational convex polytope. Hence the terminology in \cite{FN, FO} is slightly inaccurate.
\end{rem}\vspace{2mm}

\section{Perfect bases and Newton-Okounkov polytopes}

In this section, we recall the definition of Newton-Okounkov polytopes of Schubert varieties, following \cite{HK, Kav, KK1, KK2}. 

Let us fix a Borel subgroup $B \subset G$, and denote by $B^- \subset G$ the opposite Borel subgroup. Then, the full flag variety is defined to be the quotient space $G/B$. For $w \in W$, let $X(w) \subset G/B$ denote the Schubert variety corresponding to $w$, that is, $X(w)$ is the Zariski closure of $B\widetilde{w}B/B$ in $G/B$, where $\widetilde{w} \in G$ denotes a lift for $w$; note that $X(w)$ is independent of the choice of $\widetilde{w}$. It is well-known that $X(w)$ is a normal projective variety of complex dimension $\ell(w)$; here, $\ell(w)$ denotes the length of $w$. Also, for a given $\lambda \in P_+$, we define a line bundle $\mathcal{L}_\lambda$ on $G/B$ by \[\mathcal{L}_\lambda \coloneqq (G \times \mathbb{C})/B,\] where $B$ acts on $G \times \mathbb{C}$ on the right as follows: \[(g, c) \cdot b = (g b, \lambda(b) c)\] for $g \in G$, $c \in \mathbb{C}$, and $b \in B$. By restricting this bundle, we obtain a line bundle on $X(w)$, which we denote by the same symbol $\mathcal{L}_\lambda$. Let $U^-$ denote the unipotent radical of $B^-$ with Lie algebra $\mathfrak{u}^-$, and regard $U^-$ as an affine open subvariety of $G/B$ by the following open immersion: \[U^- \hookrightarrow G/B,\ u \mapsto u \bmod B.\] Then we consider the set-theoretic intersection $U^- \cap X(w)$ in $G/B$. Since this intersection is an open subset of $X(w)$, it inherits an open subvariety structure from $X(w)$; note that it coincides with the variety structure on $U^- \cap X(w)$ as a closed subvariety of $U^-$ (see \cite[Sect.\ 2]{FO}). 

Let $\mathfrak{b} \subset \mathfrak{g}$ be the Lie algebra of $B$, and $E_i, F_i, h_i \in \mathfrak{g}$, $i \in I$, the Chevalley generators such that $\{E_i, h_i \mid i \in I\} \subset \mathfrak{b}$ and $\{F_i \mid i \in I\} \subset \mathfrak{u}^-$. We set $[k]_i ! \coloneqq [k]_i [k-1]_i \cdots [1]_i$ for $i \in I$, $k \in \mathbb{Z}_{> 0}$, and $[0]_i ! \coloneqq 1$, where
\begin{align*}
[k]_i \coloneqq \frac{q_i ^k - q_i ^{-k}}{q_i - q_i ^{-1}}\ {\rm for}\ i \in I,\ k \in \mathbb{Z}_{>0}.
\end{align*}
Also, let $U_{q, \z}(\mathfrak{u}^-)$ denote the $\z[q^{\pm 1}]$-subalgebra of $U_q(\mathfrak{u}^-)$ generated by $\{f_i ^{(k)} \mid i \in I,\ k \in \z_{\ge 0}\}$, where $f_i ^{(k)} \coloneqq f_i ^k/[k]_i !$. Then, the $\c$-algebra $\mathbb{C} \otimes_{\mathbb{Z}[q^{\pm 1}]} U_{q, \mathbb{Z}}(\mathfrak{u}^-)$ is isomorphic to the universal enveloping algebra $U(\mathfrak{u}^-)$ of $\mathfrak{u}^-$ by $1 \otimes f_i^{(k)} \mapsto F_i ^k/k!$, where the $\mathbb{Z}[q^{\pm 1}]$-module structure on $\mathbb{C}$ is given by $q \mapsto 1$; hence this process is called the {\it specialization at $q=1$}. We define a $\c$-algebra anti-involution $\ast$ on $U(\mathfrak{u}^-)$ by $F_i ^\ast \coloneqq F_i$ for all $i \in I$. The algebra $U(\mathfrak{u}^-)$ has a Hopf algebra structure given by the following coproduct $\Delta$, counit $\varepsilon$, and antipode $S$:
\[\Delta(F_i) = F_i \otimes 1 + 1 \otimes F_i,\ \varepsilon(F_i) = 0,\ {\rm and}\ S(F_i) = -F_i\]
for $i \in I$. In addition, we regard $U(\mathfrak{u}^-)$ as a multigraded $\mathbb{C}$-algebra: \[U(\mathfrak{u}^-) = \bigoplus_{{\bf d} \in \mathbb{Z}^I _{\ge 0}} U(\mathfrak{u}^-)_{\bf d},\] where the homogeneous component $U(\mathfrak{u}^-)_{\bf d}$ for ${\bf d} = (d_i)_{i \in I} \in \mathbb{Z}^I _{\ge 0}$ is defined to be the $\mathbb{C}$-subspace of $U(\mathfrak{u}^-)$ spanned by all those elements $F_{j_1} \cdots F_{j_{|{\bf d}|}}$ such that the cardinality of $\{1 \le k \le |{\bf d}| \mid j_k = i\}$ is equal to $d_i$ for every $i \in I$; here we set $|{\bf d}| \coloneqq \sum_{i \in I} d_i$. Let \[U(\mathfrak{u}^-)^\ast _{\rm gr} \coloneqq \bigoplus_{{\bf d} \in \mathbb{Z}^I _{\ge 0}} {\rm Hom}_\c(U(\mathfrak{u}^-)_{\bf d}, \c)\] be the graded dual of $U(\mathfrak{u}^-)$ endowed with the dual Hopf algebra structure. Note that the coordinate ring $\mathbb{C}[U^-]$ has a Hopf algebra structure given by the following coproduct $\Delta$, counit $\varepsilon$, and antipode $S$: \[\Delta(f) (u_1, u_2) = f(u_1 u_2),\ \varepsilon(f) = f(e)\ {\rm and}\ S(f) (u) = f(u^{-1})\] for $f \in \mathbb{C}[U^-]$ and $u, u_1, u_2 \in U^-$, where $e \in U^-$ denotes the identity element. It is known that this Hopf algebra $\c[U^-]$ is isomorphic to the dual Hopf algebra $U(\mathfrak{u}^-)_{\rm gr} ^\ast$ (see, for instance, \cite[Proposition 5.1]{GLS}).

\vspace{2mm}\begin{defi}[{see \cite[Definition 5.30]{BK}, \cite[Definition 2.5]{KOP1}, and \cite[Sect.\ 4.2]{KOP2}}]\label{definition of perfect bases}\normalfont
A $\c$-basis ${\bf B}^{\rm low} \subset U(\mathfrak{u}^-)$ is said to be ({\it lower}) {\it perfect} if there exists a bijection $\Xi^{\rm low} \colon \mathcal{B}(\infty) \xrightarrow{\sim} {\bf B}^{\rm low}$ satisfying the following conditions:
\begin{enumerate}
\item[{\rm (i)}] ${\bf B}^{\rm low} = \bigcup_{{\bf d} \in \z^I _{\ge 0}} {\bf B}^{\rm low} _{\bf d}$, where ${\bf B}^{\rm low} _{\bf d} \coloneqq {\bf B}^{\rm low} \cap U(\mathfrak{u}^-)_{\bf d}$ for ${\bf d} \in \z^I _{\ge 0}$,
\item[{\rm (ii)}] $\Xi^{\rm low}(b_\infty) = 1$,
\item[{\rm (iii)}] for all $i \in I$, $b \in \mathcal{B}(\infty)$ and $k \in \mathbb{Z}_{\ge 0}$, \[F_i ^{(k)} \cdot \Xi^{\rm low}(b) \in \c^\times \Xi^{\rm low}(\tilde{f}_i ^k b) + \sum_{\substack{b^\prime \in \mathcal{B}(\infty);\ {\rm wt}(b^\prime) = {\rm wt}(\tilde{f}_i ^k b),\\ \varepsilon_i (b^\prime) > \varepsilon_i (\tilde{f}_i ^k b)}} \c \Xi^{\rm low}(b^\prime),\] where $\c^\times \coloneqq \c \setminus \{0\}$.
\end{enumerate} 
In addition, we always impose the following $\ast$-stability condition on a perfect basis:
\begin{enumerate}
\item[{\rm (iv)}] $({\bf B}^{\rm low})^{\ast}={\bf B}^{\rm low}$.
\end{enumerate}
\end{defi}\vspace{2mm}

\begin{prop}[{\cite[Proposition 3.10]{FO}}]\label{p:*_property}
The equality $\Xi^{\rm low}(b)^\ast =\Xi^{\rm low}(b^\ast)$ holds for each $b\in \mathcal{B}(\infty)$.
\end{prop}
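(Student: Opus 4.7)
The plan is to prove the equality by induction on the height $|{\bf d}|=\sum_{i\in I}d_i$ of the weight $-\sum_i d_i\alpha_i$ of $b$. The base case $|{\bf d}|=0$ is immediate: $b=b_\infty$ is the unique weight-zero element, $b_\infty^\ast=b_\infty$ (since $1^\ast=1$), so condition~(ii) gives $\Xi^{\rm low}(b_\infty)^\ast=1=\Xi^{\rm low}(b_\infty^\ast)$.

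For the inductive step, condition~(iv) allows me to introduce a weight-preserving involution $\sigma\colon\mathcal B(\infty)\to\mathcal B(\infty)$ via $\Xi^{\rm low}(b)^\ast=\Xi^{\rm low}(\sigma(b))$; the goal is $\sigma=\ast$. Fix $i\in I$ with $k\coloneqq\varepsilon_i(b)>0$ and set $b_0\coloneqq\tilde e_i^k b$, so that $\tilde f_i^k b_0=b$ and $b_0$ has strictly smaller height. The inductive hypothesis on $b_0$ gives $\Xi^{\rm low}(b_0)^\ast=\Xi^{\rm low}(b_0^\ast)$. Applying the anti-involution $\ast$ (with $F_i^\ast=F_i$) to~(iii) for $b_0$ and using~(iv) then yields
\[
\Xi^{\rm low}(b_0^\ast)\cdot F_i^{(k)} = c\,\Xi^{\rm low}(\sigma(b)) + \sum_{b'\colon\varepsilon_i(b')>k}c_{b'}\,\Xi^{\rm low}(\sigma(b')),\qquad c\in\c^\times,
\]
which exhibits $\Xi^{\rm low}(\sigma(b))$ as a leading term for the right action of $F_i^{(k)}$ on $\Xi^{\rm low}(b_0^\ast)$.

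Separately, I would establish a right-multiplication analog of~(iii): for every $d\in\mathcal B(\infty)$ and $\ell\ge 0$,
\[
\Xi^{\rm low}(d)\cdot F_i^{(\ell)}\in\c^\times\,\Xi^{\rm low}(\tilde f_i^{\ast\ell}d)+\sum_{c'\colon\varepsilon_i^\ast(c')>\varepsilon_i^\ast(\tilde f_i^{\ast\ell}d)}\c\,\Xi^{\rm low}(c'),
\]
where $\tilde f_i^\ast\coloneqq\ast\circ\tilde f_i\circ\ast$ and $\varepsilon_i^\ast\coloneqq\varepsilon_i\circ\ast$. I expect this to be the main technical hurdle, since it is not part of the perfect-basis axioms. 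My strategy is to lift ${\bf B}^{\rm low}$ to a perfect basis of the quantum algebra $U_q(\mathfrak u^-)$ (cf.\ \cite{BK}), to invoke Kashiwara's theorem that right multiplication by $f_i$ preserves the crystal lattice $L(\infty)$ and descends to $\tilde f_i^\ast$ on $\mathcal B(\infty)=L(\infty)/qL(\infty)$ (cf.\ \cite{Kas3, Kas4}), and to specialize at $q=1$.

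Granting this analog and applying it to $d=b_0^\ast$, $\ell=k$ — noting that $\tilde f_i^{\ast k}(b_0^\ast)=(\tilde f_i^k b_0)^\ast=b^\ast$ and $\varepsilon_i^\ast(b_0^\ast)=\varepsilon_i(b_0)=0$, so $\varepsilon_i^\ast(b^\ast)=k$ — produces a second expansion of $\Xi^{\rm low}(b_0^\ast)\cdot F_i^{(k)}$ in which $\Xi^{\rm low}(b^\ast)$ appears with coefficient in $\c^\times$ and is the \emph{unique} summand of $\varepsilon_i^\ast$-value exactly $k$. Uniqueness of the basis expansion identifies this expansion term-by-term with the first; a bookkeeping comparison matching the two filtered leading-term descriptions then forces $\sigma(b)=b^\ast$, completing the induction. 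The only subtlety at this last step is to rule out that $\Xi^{\rm low}(b^\ast)$ instead appears among the $\Xi^{\rm low}(\sigma(b'))$-summands with $\varepsilon_i(b')>k$; I would handle this by refining the induction to simultaneously propagate the coherence $\varepsilon_i^\ast\circ\sigma=\varepsilon_i$ together with $\sigma=\ast$ at each height.
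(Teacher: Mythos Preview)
The paper itself does not prove this statement; it simply quotes it from \cite[Proposition~3.10]{FO}, so there is no in-paper argument to compare your attempt against.

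Your proposal has a genuine gap at precisely the point you yourself flag as the main hurdle. Your strategy for the right-multiplication analogue of axiom~(iii) is to lift ${\bf B}^{\rm low}$ to a basis of $U_q(\mathfrak u^-)$, invoke Kashiwara's results on the crystal lattice there, and specialize at $q=1$. But a perfect basis in the sense of Definition~\ref{definition of perfect bases} is a purely classical object, and nothing in axioms (i)--(iv) guarantees a quantum lift. The semicanonical basis (the example recorded immediately after Example~\ref{e:perfect_global}) is a perfect basis satisfying~(iv) for which no quantum deformation is known; your lifting strategy simply does not apply to it, so the argument cannot establish the proposition in the generality stated.

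Worse, the right-multiplication formula you are after is not an auxiliary lemma but is essentially equivalent to the proposition itself. Applying the anti-involution $\ast$ to axiom~(iii) and invoking~(iv) already produces a right-multiplication expansion of $\Xi^{\rm low}(d)\cdot F_i^{(\ell)}$, but with leading term $\Xi^{\rm low}\bigl(\sigma\,\tilde f_i^{\,\ell}\sigma^{-1}(d)\bigr)$ and error terms filtered by $\varepsilon_i\circ\sigma^{-1}$, where $\sigma$ is your unknown involution. Your desired formula is this same expansion with $\ast$ in place of~$\sigma$. Thus any purely classical proof of your step~(5) for an arbitrary perfect basis would already force $\sigma=\ast$, and conversely; the comparison you set up in step~(6) between expansions~(A) and~(B) collapses, because~(B) has no independent classical source --- it is just~(A) rewritten under the assumption you are trying to prove. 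The inductive refinement you sketch (propagating $\varepsilon_i^\ast\circ\sigma=\varepsilon_i$ alongside $\sigma=\ast$) does not break this circularity, since that identity at height~$|{\bf d}|$ concerns elements of height~$|{\bf d}|$ and is again part of what must be shown there. A correct argument needs genuinely independent input, such as the rigidity of $\mathcal B(\infty)$ as a crystal, rather than a hypothetical quantum lift.
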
\vspace{2mm}

\begin{ex}\label{e:perfect_global}\normalfont
Lusztig \cite{Lus_can, Lus_quivers, Lus1} and Kashiwara \cite{Kas2} constructed a specific $\mathbb{Z}[q^{\pm 1}]$-basis $\{G_q ^{\rm low}(b) \mid b \in \mathcal{B}(\infty)\}$ of $U_{q, \mathbb{Z}}(\mathfrak{u}^-)$, called the {\it canonical basis} or the {\it lower global basis}. The specialization $\{G^{\rm low}(b) \mid b \in \mathcal{B}(\infty)\} \subset U(\mathfrak{u}^-)$ of $\{G_q ^{\rm low}(b) \mid b \in \mathcal{B}(\infty)\}$ at $q = 1$ is a perfect basis by \cite[Proposition 5.3.1]{Kas3} and \cite[Theorem 2.1.1]{Kas4} (see also \cite[Proposition 2.8]{FN}). 
\end{ex}\vspace{2mm}

\begin{ex}\normalfont
When $\mathfrak{g}$ is simply-laced, Lusztig \cite{Lus2} constructed a specific $\c$-basis of $U(\mathfrak{u}^-)$, called the {\it semicanonical basis}. This is a perfect basis by \cite[Proof of Lemma 2.4 and Sect.\ 3]{Lus2}.
\end{ex}\vspace{2mm}

For $\lambda \in P_+$, denote by $V(\lambda)$ the irreducible highest weight $\mathfrak{g}$-module with highest weight $\lambda$ with $v_\lambda \in V(\lambda)$ the highest weight vector, and by $\pi_\lambda \colon U(\mathfrak{u}^-) \twoheadrightarrow V(\lambda)$ the surjective $U(\mathfrak{u}^-)$-module homomorphism given by $u \mapsto u v_\lambda$. We set $\Xi_{\lambda}^{\rm low}(\pi_\lambda (b)) \coloneqq \pi_\lambda(\Xi^{\rm low}(b))$ for $b \in \widetilde{\mathcal{B}}(\lambda)$.

\vspace{2mm}\begin{prop}[{see \cite[Proposition 3.14 (1)]{FO}}]\label{p:compatibility}
The set $\{\Xi_{\lambda}^{\rm low}(b) \mid b \in \mathcal{B}(\lambda)\}$ provides a $\c$-basis of $V(\lambda)$, and the element $\pi_\lambda(\Xi^{\rm low}(b))$ is identical to $0$ for $b \in \mathcal{B}(\infty) \setminus \widetilde{\mathcal{B}}(\lambda)$.
\end{prop}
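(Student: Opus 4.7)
The plan is to establish the vanishing statement first---namely that $\pi_\lambda(\Xi^{\rm low}(b)) = 0$ for every $b \in \mathcal{B}(\infty) \setminus \widetilde{\mathcal{B}}(\lambda)$---and then deduce the basis statement by a dimension count. Write $\varepsilon_i^\ast(b) \coloneqq \varepsilon_i(b^\ast)$. Recall from Kashiwara's theory the characterization that $b \in \widetilde{\mathcal{B}}(\lambda)$ if and only if $\varepsilon_i^\ast(b) \le \langle \lambda, h_i \rangle$ for every $i \in I$. Fix $b_0 \in \mathcal{B}(\infty) \setminus \widetilde{\mathcal{B}}(\lambda)$, choose $i \in I$ with $c \coloneqq \varepsilon_i^\ast(b_0) > n \coloneqq \langle \lambda, h_i \rangle$, and set $b' \coloneqq \tilde{e}_i^c b_0^\ast$, so that $\varepsilon_i(b') = 0$ and $\tilde{f}_i^c b' = b_0^\ast$.

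Apply the triangularity condition (iii) in Definition \ref{definition of perfect bases} to $b'$ with $k = c$, obtaining
\[F_i^{(c)} \Xi^{\rm low}(b') = d\, \Xi^{\rm low}(b_0^\ast) + \sum_{b''} c_{b''}\, \Xi^{\rm low}(b''),\]
with $d \in \c^\times$ and each $b''$ of the same weight as $b_0^\ast$ and satisfying $\varepsilon_i(b'') > c$. Applying the anti-involution $\ast$, together with Proposition \ref{p:*_property} and $F_i^\ast = F_i$, converts this into
\[d\, \Xi^{\rm low}(b_0) = \Xi^{\rm low}((b')^\ast)\, F_i^{(c)} - \sum_{b''} c_{b''}\, \Xi^{\rm low}((b'')^\ast).\]
Now apply $\pi_\lambda$: the first term vanishes since $F_i^{(c)} v_\lambda = 0$ (as $c > n$), leaving
\[d\, \pi_\lambda(\Xi^{\rm low}(b_0)) = -\sum_{b''} c_{b''}\, \pi_\lambda(\Xi^{\rm low}((b'')^\ast)).\]

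Each $(b'')^\ast$ lies in the same weight space as $b_0$ (since $\ast$ preserves weights) and satisfies $\varepsilon_i^\ast((b'')^\ast) = \varepsilon_i(b'') > c = \varepsilon_i^\ast(b_0)$, so in particular $(b'')^\ast \notin \widetilde{\mathcal{B}}(\lambda)$. Because each weight space of $\mathcal{B}(\infty)$ is finite, $\varepsilon_i^\ast$ is bounded above on it, so a reverse induction on the value of $\varepsilon_i^\ast$ within the weight space of $b_0$ (the base case being when $\varepsilon_i^\ast(b_0)$ achieves this maximum, for which the sum on the right is vacuous) concludes that $\pi_\lambda(\Xi^{\rm low}(b_0)) = 0$. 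With the vanishing in hand, the family $\{\pi_\lambda(\Xi^{\rm low}(b)) \mid b \in \widetilde{\mathcal{B}}(\lambda)\}$ spans $V(\lambda)$ (since $\{\Xi^{\rm low}(b) \mid b \in \mathcal{B}(\infty)\}$ is a $\c$-basis of $U(\mathfrak{u}^-)$ and $\pi_\lambda$ is surjective), and its cardinality equals $|\widetilde{\mathcal{B}}(\lambda)| = |\mathcal{B}(\lambda)| = \dim_\c V(\lambda)$, so it is automatically a basis.

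The main obstacle is the choice of induction parameter: a naive induction on weight fails because all of the terms $(b'')^\ast$ on the right-hand side share the weight of $b_0$. The correct replacement is the reverse induction on $\varepsilon_i^\ast$ within a single finite-dimensional weight space of $\mathcal{B}(\infty)$, which is made available precisely by the $\ast$-stability condition (iv) of a perfect basis together with Proposition \ref{p:*_property}, so that the $\ast$-image of the triangular expansion produces the required ``right-multiplication by $F_i^{(c)}$'' that gets killed by $\pi_\lambda$.
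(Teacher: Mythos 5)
Your argument is correct, and it is worth noting that the paper itself does not prove this statement at all: it simply cites \cite{FO}, Proposition 3.14 (1). Your route is essentially the standard one (and the same circle of ideas as in \cite{FO}): use Kashiwara's criterion $\widetilde{\mathcal{B}}(\lambda)=\{b\in\mathcal{B}(\infty)\mid \varepsilon_i^\ast(b)\le\langle\lambda,h_i\rangle\ \text{for all}\ i\}$, apply the defining triangularity (iii) of a perfect basis to $b'=\tilde{e}_i^{\,c}b_0^\ast$, and then $\ast$-twist it via condition (iv) and Proposition \ref{p:*_property} so that the divided power $F_i^{(c)}$ sits on the right and is killed by $v_\lambda$ because $c>\langle\lambda,h_i\rangle$; the downward induction on $\varepsilon_i^\ast$ inside a fixed (finite) weight space is exactly the right induction parameter, and your base case works because $\ast$ is a weight-preserving involution, so the maximum of $\varepsilon_i^\ast$ on a weight space coincides with the maximum of $\varepsilon_i$ there, making the error sum empty. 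The concluding dimension count (spanning family of cardinality $|\widetilde{\mathcal{B}}(\lambda)|=|\mathcal{B}(\lambda)|=\dim_{\mathbb{C}}V(\lambda)$) is also fine. The only external inputs are standard but should be given references if this were written up: Kashiwara's $\varepsilon_i^\ast$-criterion for $\widetilde{\mathcal{B}}(\lambda)$ and the equality $|\mathcal{B}(\lambda)|=\dim_{\mathbb{C}}V(\lambda)$; neither is proved in the present paper, so citing them is consistent with its conventions.
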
\vspace{2mm}

For $w \in W$, let $v_{w\lambda} \in V(\lambda)$ denote the extremal weight vector of weight $w\lambda$. The {\it Demazure module} $V_w(\lambda)$ corresponding to $w \in W$ is the $B$-submodule of $V(\lambda)$ given by \[V_w(\lambda) \coloneqq \sum_{b \in B} \mathbb{C} b v_{w\lambda}.\] By the Borel-Weil type theorem (see \cite[Corollary 8.1.26]{Kum}), we know that the space $H^0(X(w), \mathcal{L}_\lambda)$ of global sections is a $B$-module isomorphic to the dual module $V_w (\lambda)^\ast \coloneqq {\rm Hom}_\mathbb{C}(V_w(\lambda), \mathbb{C})$. We consider the following condition (D) for a perfect basis ${\bf B}^{\rm low} = \{\Xi^{\rm low}(b) \mid b \in \mathcal{B}(\infty)\}$ (see also Proposition \ref{p:compatibility}):
\begin{enumerate}
\item[(D)] the set $\{\Xi_{\lambda}^{\rm low}(b) \mid b \in \mathcal{B}_w(\lambda)\}$ is a $\c$-basis of the Demazure module $V_w(\lambda)$.
\end{enumerate}

\vspace{2mm}\begin{ex}\label{r:D_example}\normalfont
The specialization $\{G^{\rm low}(b) \mid b \in \mathcal{B}(\infty)\}$ of the lower global basis at $q=1$ and the semicanonical basis satisfy condition (D) by \cite[Proposition 3.2.3]{Kas4} and \cite[Theorem 7.1]{Sav}, respectively.
\end{ex}\vspace{2mm}

Let ${\bf B}^{\rm up} = \{\Xi^{\rm up} (b) \mid b \in \mathcal{B}(\infty)\} \subset \c[U^-] = U(\mathfrak{u}^-)_{\rm gr} ^\ast$ be the dual basis of ${\bf B}^{\rm low} = \{\Xi^{\rm low} (b) \mid b \in \mathcal{B}(\infty)\} \subset U(\mathfrak{u}^-)$. Recall that $U^- \cap X(w)$ is a Zariski closed subvariety of $U^-$. Denote by $\eta_w \colon \mathbb{C}[U^-] \twoheadrightarrow \mathbb{C}[U^- \cap X(w)]$ the restriction map, and by $\Xi_w ^{\rm up} (b) \in \mathbb{C}[U^- \cap X(w)]$ for $b \in \mathcal{B}(\infty)$ the image of $\Xi^{\rm up} (b) \in \mathbb{C}[U^-]$ under $\eta_w$. If ${\bf B}^{\rm low}$ satisfies condition (D), then let $\{\Xi^{\rm up} _{\lambda, w}(b) \mid b \in \mathcal{B}_w(\lambda)\} \subset H^0(X(w), \mathcal{L}_\lambda) = V_w(\lambda)^\ast$ denote the dual basis of $\{\Xi^{\rm low} _\lambda(b) \mid b \in \mathcal{B}_w(\lambda)\} \subset V_w(\lambda)$, and set $\tau_\lambda \coloneqq \Xi^{\rm up} _{\lambda, w}(b_\lambda)$.

\vspace{2mm}\begin{lem}[{see the proof of \cite[Lemma 4.5]{FN}}]\label{d:iota}
The section $\tau_\lambda \in H^0(X(w), \mathcal{L}_\lambda)$ does not vanish on $U^- \cap X(w)$. Hence the map $H^0(X(w), \mathcal{L}_\lambda) \rightarrow \c[U^- \cap X(w)]$, $\tau \mapsto (\tau/\tau_\lambda)|_{(U^- \cap X(w))}$, is well-defined$;$ this map is also denoted by $\iota_\lambda$. 
\end{lem}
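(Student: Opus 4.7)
The plan is to identify $\tau_\lambda$, under the Borel-Weil isomorphism $H^0(X(w),\mathcal{L}_\lambda) \simeq V_w(\lambda)^\ast$, with the restriction to $V_w(\lambda)$ of the weight-$\lambda$ coordinate functional $v_\lambda^\ast \in V(\lambda)^\ast$; once this is done, a short weight argument shows that this functional is nowhere-vanishing along $U^- \cap X(w)$, and the second assertion follows at once from the general principle that a nowhere-zero section of a line bundle trivializes it over its non-vanishing locus.

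For the identification I would argue as follows. By Definition \ref{definition of perfect bases}(ii) we have $\Xi^{\rm low}(b_\infty) = 1$, so $\Xi_\lambda^{\rm low}(b_\lambda) = \pi_\lambda(1) = v_\lambda$. Every other element $\Xi_\lambda^{\rm low}(b)$ with $b \in \mathcal{B}_w(\lambda) \setminus \{b_\lambda\}$ is a weight vector of weight strictly lower than $\lambda$. Hence the dual basis element $\tau_\lambda \in V_w(\lambda)^\ast$ annihilates every weight space other than $V(\lambda)_\lambda \cap V_w(\lambda) = \c v_\lambda$, on which it takes the value $1$; that is, $\tau_\lambda$ coincides with the restriction to $V_w(\lambda)$ of the projection $v_\lambda^\ast \colon V(\lambda) \to \c$ onto the one-dimensional $\lambda$-weight space.

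For the non-vanishing I would invoke the standard Pl\"ucker description: under the $G$-equivariant embedding $G/B \hookrightarrow \p(V(\lambda))$, $gB \mapsto [g v_\lambda]$, the section of $\mathcal{L}_\lambda$ associated to $\xi \in V(\lambda)^\ast$ is realized by the function $g \mapsto \langle \xi, g v_\lambda\rangle$ on $G$, so its zero locus on $G/B$ is $\{gB \mid \langle \xi, g v_\lambda\rangle = 0\}$. Applied to $\xi = v_\lambda^\ast$, this says $\tau_\lambda$ vanishes at $gB$ precisely when the $\lambda$-weight component of $g v_\lambda$ is zero. For any $u \in U^-$, however, $\mathfrak{u}^-$ acts on $v_\lambda$ by strictly lowering its weight, so exponentiating yields $u v_\lambda \in v_\lambda + \bigoplus_{\mu \neq \lambda} V(\lambda)_\mu$; thus the $\lambda$-weight component of $u v_\lambda$ equals $v_\lambda$ and $\tau_\lambda$ is nowhere zero on $U^- \cap X(w)$.

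Granted the non-vanishing, $\tau_\lambda$ trivializes $\mathcal{L}_\lambda$ over $U^- \cap X(w)$, so for every $\tau \in H^0(X(w),\mathcal{L}_\lambda)$ the quotient $(\tau/\tau_\lambda)|_{(U^- \cap X(w))}$ is a well-defined regular function, which is the map $\iota_\lambda$. The only point requiring care is matching the conventions used in the Borel-Weil identification with those implicit in the Pl\"ucker formula for the vanishing locus (in particular the convention for the right $B$-action on $G \times \c$ defining $\mathcal{L}_\lambda$ and the resulting sign/character of the weight $\lambda$); I would cross-check this against \cite[Sect.\ 2]{FO}, but I do not expect any serious obstacle beyond bookkeeping.
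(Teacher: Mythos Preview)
Your argument is correct and is the standard one: identify $\tau_\lambda$ with the restriction of the highest-weight coordinate functional $v_\lambda^\ast$ via the Borel--Weil isomorphism, then observe that $u v_\lambda \in v_\lambda + (\text{lower weight terms})$ for $u \in U^-$. The paper does not supply its own proof of this lemma but simply refers to the proof of \cite[Lemma~4.5]{FN}; your write-up is essentially what one would find there, so there is nothing to compare.
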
\vspace{2mm}

Since $U^- \cap X(w)$ is an open subvariety of $X(w)$, we see that the map $\iota_\lambda \colon H^0(X(w), \mathcal{L}_\lambda) \rightarrow \c[U^- \cap X(w)]$ is injective. 

\vspace{2mm}\begin{prop}[{\cite[Corollary 3.18]{FO}}]\label{vanishing}
Let ${\bf B}^{\rm up} = \{\Xi^{\rm up}(b) \mid b \in \mathcal{B}(\infty)\} \subset \c[U^-]$ be the dual basis of a perfect basis satisfying condition {\rm (D)}.
\begin{enumerate}
\item[{\rm (1)}] The following equality holds$:$ \[\c[U^- \cap X(w)] = \bigcup_{\lambda \in P_+} \iota_\lambda(H^0(X(w), \mathcal{L}_\lambda)).\] 
\item[{\rm (2)}] The element $\Xi^{\rm up} _w(b)$ is identical to $\iota_\lambda(\Xi^{\rm up} _{\lambda, w}(\pi_\lambda(b)))$ for every $b \in \widetilde{\mathcal{B}}_w(\lambda)$. 
\item[{\rm (3)}] The set $\{\Xi^{\rm up} _w(b) \mid b\in\mathcal{B}_w(\infty)\}$ provides a $\mathbb{C}$-basis of $\mathbb{C}[U^- \cap X(w)]$.
\item[{\rm (4)}] The element $\Xi^{\rm up} _w(b)$ is identical to $0$ unless $b \in \mathcal{B}_w (\infty)$.
\end{enumerate}
\end{prop}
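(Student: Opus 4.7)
I would establish the four assertions in the order (1), (2), (4), (3). Part (1) is essentially standard: by Lemma~\ref{d:iota}, $\tau_\lambda$ is non-vanishing on $U^{-} \cap X(w)$, and for $\lambda$ regular dominant the zero divisor of $\tau_\lambda$ on $X(w)$ is exactly the Schubert-divisor complement of $U^{-} \cap X(w)$. Combined with the normality of $X(w)$ and the ampleness of $\mathcal{L}_\lambda$, this yields the standard description
\[
\mathbb{C}[U^{-} \cap X(w)] \;=\; \bigcup_{n \ge 0}\, \tau_{n\lambda}^{-1}\, H^0(X(w),\mathcal{L}_{n\lambda})\big|_{U^{-} \cap X(w)}.
\]
Since $\tau_{\lambda_1}\,\tau_{\lambda_2}$ differs from $\tau_{\lambda_1+\lambda_2}$ by a nonzero scalar (both being dual to $v_{\lambda_1+\lambda_2}$ in the perfect basis), the right-hand side coincides with $\bigcup_{\lambda \in P_+} \iota_\lambda(H^0(X(w),\mathcal{L}_\lambda))$.

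Parts (2) and (4) both rest on the matrix-coefficient identity
\[
\Xi^{\rm up}(b)(u) \;=\; \Xi^{\rm up}_\lambda(\pi_\lambda(b))(u\,v_\lambda) \qquad (b \in \widetilde{\mathcal{B}}(\lambda),\ u \in U^{-}),
\]
where $\{\Xi^{\rm up}_\lambda(b') \mid b' \in \mathcal{B}(\lambda)\} \subset V(\lambda)^{*}$ denotes the dual basis of $\{\Xi^{\rm low}_\lambda(b')\}$. I would prove this by expanding $u$ in a PBW basis through the exponential embedding $U^{-} \hookrightarrow \widehat{U}(\mathfrak{u}^{-})$: the graded pairing $\mathbb{C}[U^{-}] \times U(\mathfrak{u}^{-}) \to \mathbb{C}$ returns the coefficient of $\Xi^{\rm low}(b)$ in $u$, and Proposition~\ref{p:compatibility} together with the bijectivity of $\pi_\lambda$ on $\widetilde{\mathcal{B}}(\lambda)$ matches this to the coefficient of $\Xi^{\rm low}_\lambda(\pi_\lambda(b))$ in $\pi_\lambda(u) = u\,v_\lambda$. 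One also has $\tau_\lambda(u\,v_\lambda) = 1$ on $U^{-}$, since $u\,v_\lambda - v_\lambda$ has strictly lower weight and $\tau_\lambda$ is dual to $v_\lambda$. For (2), restrict the identity to $U^{-} \cap X(w)$; there $u\,v_\lambda \in V_w(\lambda)$, because the Pl\"ucker image of $X(w)$ in $\mathbb{P}(V(\lambda))$ lies in $\mathbb{P}(V_w(\lambda))$, and condition (D) identifies $\Xi^{\rm up}_\lambda(\pi_\lambda(b))|_{V_w(\lambda)}$ with $\Xi^{\rm up}_{\lambda,w}(\pi_\lambda(b))$, yielding the claim.

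For (4), given $b \notin \mathcal{B}_w(\infty)$, pick $\lambda$ regular dominant with $b \in \widetilde{\mathcal{B}}(\lambda)$; Proposition~\ref{Demazure crystal} together with the bijectivity of $\pi_\lambda$ on $\widetilde{\mathcal{B}}(\lambda)$ forces $\pi_\lambda(b) \in \mathcal{B}(\lambda) \setminus \mathcal{B}_w(\lambda)$, so condition (D) makes $\Xi^{\rm up}_\lambda(\pi_\lambda(b))$ vanish on $V_w(\lambda)$, and the key identity yields $\Xi^{\rm up}_w(b) = 0$. Part (3) is then immediate: for each $\lambda$, $\iota_\lambda$ sends the basis $\{\Xi^{\rm up}_{\lambda,w}(b') \mid b' \in \mathcal{B}_w(\lambda)\}$ of $H^0(X(w),\mathcal{L}_\lambda)$ to $\{\Xi^{\rm up}_w(b) \mid b \in \widetilde{\mathcal{B}}_w(\lambda)\}$ by (2); summing over $\lambda$ and using (1) together with $\bigcup_\lambda \widetilde{\mathcal{B}}_w(\lambda) = \mathcal{B}_w(\infty)$ gives spanning, while linear independence at each level follows from injectivity of $\iota_\lambda$ and the linear independence of $\{\Xi^{\rm up}_{\lambda,w}(b')\}$.

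The principal technical obstacle is the matrix-coefficient identity itself: it requires a careful unraveling of the graded-dual identification $\mathbb{C}[U^{-}] \cong U(\mathfrak{u}^{-})^{*}_{\rm gr}$ and the way evaluation at group elements interacts with $\pi_\lambda$. Once this identity is in place, the passage to the Demazure setting via condition (D) and the deduction of (3) are essentially formal.
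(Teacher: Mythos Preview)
The paper does not prove this proposition; it is stated with a citation to \cite[Corollary 3.18]{FO} and used as a black box, so there is no in-paper argument to compare against. Your outline is correct and follows the expected route: the matrix-coefficient identity $\Xi^{\rm up}(b)(u) = \Xi^{\rm up}_\lambda(\pi_\lambda(b))(u\,v_\lambda)$ is indeed the crux, and once it is established the deductions of (2), (4), and (3) via condition (D), the inclusion of the image of $X(w)$ in $\mathbb{P}(V_w(\lambda))$, and the injectivity of $\iota_\lambda$ are formal. One small point worth tightening in (3): linear independence of the full set $\{\Xi^{\rm up}_w(b) \mid b \in \mathcal{B}_w(\infty)\}$ requires noting that any finite collection lies in a single $\widetilde{\mathcal{B}}_w(\lambda)$ for $\lambda$ sufficiently dominant, so that the per-$\lambda$ argument applies; you implicitly use this but do not say it.
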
\vspace{2mm}

Let ${\bf i} = (i_1, \ldots, i_r) \in I^r$ be a reduced word for $w \in W$. It is known that the morphism $\c^r \rightarrow U^- \cap X(w)$, $(t_1, \ldots, t_r) \mapsto \exp(t_1 F_{i_1}) \cdots \exp(t_r F_{i_r}) \bmod B$, is birational. Therefore, the function field $\mathbb{C}(X(w)) = \mathbb{C}(U^- \cap X(w))$ is identified with the rational function field $\mathbb{C}(t_1, \ldots, t_r)$. 

\vspace{2mm}\begin{defi}\label{d:valuations}\normalfont
We define two lexicographic orders $<$ and $\prec$ on $\mathbb{Z}^r$ as follows: $(a_1, \ldots, a_r) < (a_1 ^\prime, \ldots, a_r ^\prime)$ (resp., $(a_1, \ldots, a_r) \prec (a_1 ^\prime, \ldots, a_r ^\prime)$) if and only if there exists $1 \le k \le r$ such that $a_1 = a_1 ^\prime, \ldots, a_{k-1} = a_{k-1} ^\prime$, $a_k < a_k ^\prime$ (resp., $a_r = a_r ^\prime, \ldots, a_{k+1} = a_{k+1} ^\prime$, $a_k < a_k ^\prime$). The lexicographic order $<$ on $\mathbb{Z}^r$ induces a total order (denoted by the same symbol $<$) on the set of all monomials in the polynomial ring $\mathbb{C}[t_1, \ldots, t_r]$ as follows: $t_1 ^{a_1} \cdots t_r ^{a_r} < t_1 ^{a_1 ^\prime} \cdots t_r ^{a_r ^\prime}$ if and only if $(a_1, \ldots, a_r) < (a_1 ^\prime, \ldots, a_r ^\prime)$. Let us define a map $v_{\bf i} \colon \mathbb{C}(X(w)) \setminus \{0\} \rightarrow \mathbb{Z}^r$ by $v_{\bf i} (f/g) \coloneqq v_{\bf i} (f) - v_{\bf i} (g)$ for $f, g \in \mathbb{C}[t_1, \ldots, t_r] \setminus \{0\}$, and by 
\begin{align*}
&v_{\bf i}(f) \coloneqq -(a_1, \ldots, a_r)\ {\rm for}\ f = c t_1 ^{a_1} \cdots t_r ^{a_r} + ({\rm lower\ terms}) \in \mathbb{C}[t_1, \ldots, t_r] \setminus \{0\},
\end{align*}
where $c \in \mathbb{C} \setminus \{0\}$, and we mean by ``lower terms'' a linear combination of monomials smaller than $t_1 ^{a_1} \cdots t_r ^{a_r}$ with respect to the total order $<$. Similarly, we define a map $\tilde{v}_{\bf i}$ by using the lexicographic order $\prec$ on $\mathbb{Z}^r$; more precisely, we set
\begin{align*}
&\tilde{v}_{\bf i}(f) \coloneqq -(a_1, \ldots, a_r)\ {\rm for}\ f = c t_1 ^{a_1} \cdots t_r ^{a_r} + ({\rm lower\ terms}) \in \mathbb{C}[t_1, \ldots, t_r] \setminus \{0\},
\end{align*}
where $c \in \mathbb{C} \setminus \{0\}$.
\end{defi}\vspace{2mm}

The map $v_{\bf i}$ is a valuation, that is, it satisfies the following conditions:
\begin{align*}
&v_{\bf i}(f \cdot g) = v_{\bf i}(f) + v_{\bf i}(g),\\
&v_{\bf i}(c \cdot f) = v_{\bf i}(f),\\
&v_{\bf i}(f + g) \ge \min\{v_{\bf i}(f), v_{\bf i}(g)\}\ {\rm with\ respect\ to\ the\ lexicographic\ order}\ < {\rm unless}\ f + g = 0
\end{align*}
for $f, g \in \mathbb{C}(X(w)) \setminus \{0\}$ and $c \in \c$. Similarly, the map $\tilde{v}_{\bf i}$ is a valuation with respect to the lexicographic order $\prec$.

\vspace{2mm}\begin{ex}\normalfont
If $r = 3$ and $f = t_1 t_2 + t_3 ^2 \in \mathbb{C}[t_1, t_2, t_3]$, then we have $v_{\bf i}(f) = -(1, 1, 0)$ and $\tilde{v}_{\bf i}(f) = -(0, 0, 2)$.
\end{ex}

\vspace{2mm}\begin{defi}\normalfont\label{d:Newton-Okounkov polytopes}
Let ${\bf i} = (i_1, \ldots, i_r) \in I^r$ be a reduced word for $w \in W$, and $\lambda \in P_+$. Take $v \in \{v_{\bf i}, \tilde{v}_{\bf i}\}$ and $\tau \in H^0(X(w), \mathcal{L}_\lambda) \setminus \{0\}$. We define a subset $S(X(w), \mathcal{L}_\lambda, v, \tau) \subset \mathbb{Z}_{>0} \times \mathbb{Z}^r$ by \[S(X(w), \mathcal{L}_\lambda, v, \tau) \coloneqq \bigcup_{k>0} \{(k, v(\sigma/\tau^k)) \mid \sigma \in H^0(X(w), \mathcal{L}_\lambda ^{\otimes k}) \setminus \{0\}\},\] and denote by $C(X(w), \mathcal{L}_\lambda, v, \tau) \subset \mathbb{R}_{\ge 0} \times \mathbb{R}^r$ the smallest real closed cone containing $S(X(w), \mathcal{L}_\lambda, v, \tau)$. Let us define a subset $\Delta(X(w), \mathcal{L}_\lambda, v, \tau) \subset \mathbb{R}^r$ by \[\Delta(X(w), \mathcal{L}_\lambda, v, \tau) \coloneqq \{{\bf a} \in \mathbb{R}^r \mid (1, {\bf a}) \in C(X(w), \mathcal{L}_\lambda, v, \tau)\};\]
this is called the {\it Newton-Okounkov polytope} of $X(w)$ associated to $\mathcal{L}_\lambda$, $v$, and $\tau$.
\end{defi}\vspace{2mm}

We define a linear automorphism $\omega \colon \mathbb{R} \times \mathbb{R}^r \xrightarrow{\sim} \mathbb{R} \times \mathbb{R}^r$ by $\omega(k, {\bf a}) \coloneqq (k, -{\bf a})$. Recall that $\tau_\lambda = \Xi^{\rm up} _{\lambda, w} (b_\lambda) \in H^0(X(w), \mathcal{L}_\lambda)$. 

\vspace{2mm}\begin{prop}[{see \cite[Sect.\ 4]{Kav}}]\label{string polytopes}
Let ${\bf i} = (i_1, \ldots, i_r) \in I^r$ be a reduced word for $w \in W$, $\lambda \in P_+$, and ${\bf B}^{\rm up} = \{\Xi^{\rm up}(b) \mid b \in \mathcal{B}(\infty)\} \subset \c[U^-]$ the dual basis of a perfect basis.
\begin{enumerate}
\item[{\rm (1)}] Littelmann's string parametrization $\Phi_{\bf i} (b)$ is equal to $-v_{\bf i}(\Xi^{\rm up} _w (b))$ for every $b \in \mathcal{B}_w(\infty)$.
\item[{\rm (2)}] The following equalities hold$:$
\begin{align*}
&\mathcal{S}_{\bf i} ^{(\lambda, w)} = \omega(S(X(w), \mathcal{L}_\lambda, v_{\bf i}, \tau_\lambda)),\ \mathcal{C}_{\bf i} ^{(\lambda, w)} = \omega(C(X(w), \mathcal{L}_\lambda, v_{\bf i}, \tau_\lambda)),\ {\it and}\\
&\Delta_{\bf i} ^{(\lambda, w)} = -\Delta(X(w), \mathcal{L}_\lambda, v_{\bf i}, \tau_\lambda).
\end{align*}
\end{enumerate}
\end{prop}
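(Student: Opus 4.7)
My plan is to establish (1) directly and deduce (2) as a corollary, following the strategy of Kaveh in \cite{Kav} adapted to the perfect-basis framework. For (1), I first identify $\phi^\ast \Xi_w^{\rm up}(b) \in \c[t_1, \ldots, t_r]$ with a pairing on the perfect basis: since the birational map $\phi(t_1, \ldots, t_r) = \exp(t_1 F_{i_1}) \cdots \exp(t_r F_{i_r}) \bmod B$ lands in $U^-$, and $\exp(t_k F_{i_k}) = \sum_{a_k \ge 0} t_k^{a_k} F_{i_k}^{(a_k)}$ in a completion of $U(\mathfrak{u}^-)$, the identification $\c[U^-] \simeq U(\mathfrak{u}^-)^\ast_{\rm gr}$ yields
\[
\phi^\ast \Xi_w^{\rm up}(b) = \sum_{a_1, \ldots, a_r \ge 0} t_1^{a_1} \cdots t_r^{a_r} \, \langle F_{i_1}^{(a_1)} \cdots F_{i_r}^{(a_r)}, \Xi^{\rm up}(b) \rangle.
\]
By duality of ${\bf B}^{\rm low}$ and ${\bf B}^{\rm up}$, the pairing on the right equals the coefficient $c_b(a_1, \ldots, a_r)$ of $\Xi^{\rm low}(b)$ when $F_{i_1}^{(a_1)} \cdots F_{i_r}^{(a_r)} \in U(\mathfrak{u}^-)$ is expanded in the perfect basis.

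I would then pin down the lex-maximal $(a_1, \ldots, a_r)$ with $c_b(a_1, \ldots, a_r) \neq 0$ by iterated use of condition (iii) of Definition \ref{definition of perfect bases}. Reading the product from right to left, each $\Xi^{\rm low}(b)$ appearing with nonzero coefficient in $F_{i_1}^{(a_1)} \cdots F_{i_r}^{(a_r)} \cdot 1$ arises from some chain $b = b^{(0)}, b^{(1)}, \ldots, b^{(r)} = b_\infty$ in $\mathcal{B}(\infty)$ with $\Xi^{\rm low}(b^{(k-1)})$ occurring in $F_{i_k}^{(a_k)} \Xi^{\rm low}(b^{(k)})$. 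The inequality in (iii) gives $\varepsilon_{i_k}(b^{(k-1)}) \ge \varepsilon_{i_k}(b^{(k)}) + a_k$, with equality forcing $b^{(k-1)} = \tilde{f}_{i_k}^{a_k} b^{(k)}$ (the main term). Maximizing $a_1$ subject to $a_1 \le \varepsilon_{i_1}(b)$ gives $a_1 = \varepsilon_{i_1}(b) = \Phi_{\bf i}(b)_1$ and forces $b^{(1)} = \tilde{e}_{i_1}^{a_1} b$; iterating, the lex-maximal choice forces $b^{(k)} = \tilde{e}_{i_k}^{a_k} \cdots \tilde{e}_{i_1}^{a_1} b$ and $a_k = \Phi_{\bf i}(b)_k$, terminating consistently at $b^{(r)} = b_\infty$. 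At this tuple, $c_b(\Phi_{\bf i}(b))$ is a product of the $\c^\times$-scalars from (iii) at each step, hence nonzero. By the definition of $v_{\bf i}$, this gives $v_{\bf i}(\Xi_w^{\rm up}(b)) = -\Phi_{\bf i}(b)$.

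For (2), I would use Proposition \ref{vanishing} to expand any nonzero $\sigma \in H^0(X(w), \mathcal{L}_{k\lambda})$ as $\sum c_b \, \Xi^{\rm up}_{k\lambda, w}(\pi_{k\lambda}(b))$ over $b \in \widetilde{\mathcal{B}}_w(k\lambda)$. Under $\iota_{k\lambda}$, this becomes the corresponding linear combination of the $\Xi_w^{\rm up}(b)$, up to the nonvanishing scalar $\iota_{k\lambda}(\tau_\lambda^k)$ on $U^- \cap X(w)$, which does not affect $v_{\bf i}$. Since by (1) the values $v_{\bf i}(\Xi_w^{\rm up}(b)) = -\Phi_{\bf i}(b)$ are pairwise distinct (Littelmann's string parametrization is injective on $\mathcal{B}(\infty)$), the minimum of these values over summands with $c_b \ne 0$ is achieved uniquely, so no cancellation of the leading term occurs and $v_{\bf i}(\sigma/\tau_\lambda^k) = -\Phi_{\bf i}(b^\ast)$ for a unique $b^\ast$. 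Conversely, each $-\Phi_{\bf i}(b)$ with $b \in \widetilde{\mathcal{B}}_w(k\lambda)$ is realized by the single-summand choice $\sigma = \Xi^{\rm up}_{k\lambda, w}(\pi_{k\lambda}(b))$. This gives $S(X(w), \mathcal{L}_\lambda, v_{\bf i}, \tau_\lambda) = \omega(\mathcal{S}_{\bf i}^{(\lambda, w)})$; taking closed cones and slicing at height $1$ yields the stated equalities for $\mathcal{C}$ and $\Delta$.

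The main obstacle is the iterated-perfect-basis step in (1): one must ensure that no lex-greater tuple $(a_1', \ldots, a_r')$ admits a ``higher-$\varepsilon$'' chain reaching $b$, and that the main-term contribution at $\Phi_{\bf i}(b)$ is not cancelled by other chains. The strict inequality $\varepsilon_{i_k}(b^{(k-1)}) > \varepsilon_{i_k}(\tilde{f}_{i_k}^{a_k} b^{(k)})$ for non-main terms in (iii) propagates along the chain, immediately obstructing any lex-greater tuple at the first coordinate where it exceeds $\Phi_{\bf i}(b)$; and the main-term scalar being in $\c^\times$ precludes cancellation at the lex-maximum.
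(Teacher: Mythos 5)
Your proposal is correct and is essentially the argument behind the cited proof of this statement (\cite[Sect.\ 4]{Kav}, in the perfect-basis form used in \cite{FN, FO}): writing $\Xi^{\rm up}_w(b)$ on the cell as the generating function $\sum_{\bf a} \langle F_{i_1}^{(a_1)}\cdots F_{i_r}^{(a_r)}, \Xi^{\rm up}(b)\rangle\, t_1^{a_1}\cdots t_r^{a_r}$ and using condition (iii) of Definition \ref{definition of perfect bases} to force the lex-leading exponent to be $\Phi_{\bf i}(b)$, then deducing (2) by expanding sections of $\mathcal{L}_{k\lambda}$ in the dual basis and using that distinct leading exponents cannot cancel. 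Two small touch-ups: the injectivity you need is of $\Phi_{\bf i}$ on $\mathcal{B}_w(\infty)$ (it is not injective on all of $\mathcal{B}(\infty)$ for $w \neq w_0$), which holds since $b = \tilde{f}_{i_1}^{a_1}\cdots \tilde{f}_{i_r}^{a_r}b_\infty$ recovers $b$ from its string; and $\iota_{k\lambda}(\tau_\lambda^{k})$ is not just a nonvanishing factor but the constant function $1$ on $U^-\cap X(w)$ (by the multiplicativity $\tau_\lambda\tau_\mu = \tau_{\lambda+\mu}$ of these highest-weight matrix coefficients on the cell), which is the precise reason passing from $\sigma/\tau_\lambda^{k}$ to $\iota_{k\lambda}(\sigma)$ does not change $v_{\bf i}$.
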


\vspace{2mm}\begin{prop}[{see \cite[Sect.\ 4]{FN}}]\label{polyhedral realizations}
Let ${\bf i} = (i_1, \ldots, i_r) \in I^r$ be a reduced word for $w \in W$, $\lambda \in P_+$, and ${\bf B}^{\rm up} = \{\Xi^{\rm up}(b) \mid b \in \mathcal{B}(\infty)\} \subset \c[U^-]$ the dual basis of a perfect basis.
\begin{enumerate}
\item[{\rm (1)}] The Kashiwara embedding $\Psi_{\bf i} (b)$ is equal to $-\tilde{v}_{\bf i}(\Xi^{\rm up} _w (b))$ for every $b \in \mathcal{B}_w(\infty)$.
\item[{\rm (2)}] The following equalities hold$:$
\begin{align*}
&\widetilde{\mathcal{S}}_{\bf i} ^{(\lambda, w)} = \omega(S(X(w), \mathcal{L}_\lambda, \tilde{v}_{\bf i}, \tau_\lambda)),\ \widetilde{\mathcal{C}}_{\bf i} ^{(\lambda, w)} = \omega(C(X(w), \mathcal{L}_\lambda, \tilde{v}_{\bf i}, \tau_\lambda)),\ {\it and}\\ 
&\widetilde{\Delta}_{\bf i} ^{(\lambda, w)} = -\Delta(X(w), \mathcal{L}_\lambda, \tilde{v}_{\bf i}, \tau_\lambda).
\end{align*}
\end{enumerate}
\end{prop}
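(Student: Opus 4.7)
Plan for the proof. Part (1) is the technical heart, and the three equalities in part (2) follow from it by a standard valuation/basis argument of the same shape as the one used for Proposition \ref{string polytopes}(2).

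For part (1), my plan is to use Kashiwara's involution $\ast$ to reduce to Proposition \ref{string polytopes}(1) applied to the reversed reduced word ${\bf i}^{\rm op}$ and the element $b^\ast \in \mathcal{B}_{w^{-1}}(\infty)$. Let $f(t_1, \ldots, t_r) \in \mathbb{C}[t_1, \ldots, t_r]$ denote the pullback of $\Xi^{\rm up}_w(b)$ along the birational map $(t_1, \ldots, t_r) \mapsto \exp(t_1 F_{i_1}) \cdots \exp(t_r F_{i_r}) \bmod B$. Expanding the exponentials in the completion of $U(\mathfrak{u}^-)$ and pairing with $\Xi^{\rm up}(b)$ gives
\[f(t_1, \ldots, t_r) = \sum_{{\bf a} \in \mathbb{Z}_{\ge 0}^r} \frac{t_1^{a_1} \cdots t_r^{a_r}}{a_1! \cdots a_r!} \bigl\langle \Xi^{\rm up}(b),\, F_{i_1}^{a_1} \cdots F_{i_r}^{a_r} \bigr\rangle.\]
Proposition \ref{p:*_property} asserts $\Xi^{\rm low}(b)^\ast = \Xi^{\rm low}(b^\ast)$; dualizing (using that $U(\mathfrak{u}^-)$ is a cocommutative Hopf algebra, so that $\ast$ induces an algebra involution on $\mathbb{C}[U^-]$) yields that this involution sends $\Xi^{\rm up}(b) \mapsto \Xi^{\rm up}(b^\ast)$. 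Combined with $(F_{i_1}^{a_1} \cdots F_{i_r}^{a_r})^\ast = F_{i_r}^{a_r} \cdots F_{i_1}^{a_1}$, this gives
\[\bigl\langle \Xi^{\rm up}(b),\, F_{i_1}^{a_1} \cdots F_{i_r}^{a_r} \bigr\rangle = \bigl\langle \Xi^{\rm up}(b^\ast),\, F_{i_r}^{a_r} \cdots F_{i_1}^{a_1} \bigr\rangle.\]
Reindexing ${\bf a} \mapsto {\bf a}^{\rm op}$ and substituting $s_k \coloneqq t_{r-k+1}$ then identifies $f(t_1, \ldots, t_r)$ with the analogous pullback of $\Xi^{\rm up}_{w^{-1}}(b^\ast)$ along the ${\bf i}^{\rm op}$-parametrization of $U^- \cap X(w^{-1})$, evaluated at $(s_1, \ldots, s_r)$. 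Under this coordinate reversal the monomial $t^{\bf a}$ becomes $s^{{\bf a}^{\rm op}}$, so the reverse lex order $\prec$ on $t$-exponents coincides with the standard lex order $<$ on $s$-exponents; hence
\[\tilde{v}_{\bf i}\bigl(\Xi^{\rm up}_w(b)\bigr) = v_{{\bf i}^{\rm op}}\bigl(\Xi^{\rm up}_{w^{-1}}(b^\ast)\bigr)^{\rm op}.\]
Since $b^\ast \in \mathcal{B}_{w^{-1}}(\infty)$ by $\mathcal{B}_w(\infty)^\ast = \mathcal{B}_{w^{-1}}(\infty)$, Proposition \ref{string polytopes}(1) applied to ${\bf i}^{\rm op}$ gives $v_{{\bf i}^{\rm op}}(\Xi^{\rm up}_{w^{-1}}(b^\ast)) = -\Phi_{{\bf i}^{\rm op}}(b^\ast)$, and the definition $\Psi_{\bf i}(b) = \Phi_{{\bf i}^{\rm op}}(b^\ast)^{\rm op}$ then yields $\tilde{v}_{\bf i}(\Xi^{\rm up}_w(b)) = -\Psi_{\bf i}(b)$, proving part (1).

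For part (2), I proceed as in the proof of Proposition \ref{string polytopes}(2). Any nonzero $\sigma \in H^0(X(w), \mathcal{L}_\lambda^{\otimes k})$ decomposes uniquely as $\sigma = \sum_{b' \in \mathcal{B}_w(k\lambda)} c_{b'} \Xi^{\rm up}_{k\lambda, w}(b')$, and applying $\iota_{k\lambda}$ (together with the fact that $\tau_\lambda^k$ and $\tau_{k\lambda}$ agree up to a nonzero constant on $U^- \cap X(w)$, since both are extremal highest-weight sections of $\mathcal{L}_{k\lambda}$ normalized at the base point) shows that $\sigma/\tau_\lambda^k$ is identified with the corresponding linear combination of the functions $\Xi^{\rm up}_w(\pi_{k\lambda}^{-1}(b'))$ on $U^- \cap X(w)$. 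Part (1) shows that the values $\tilde{v}_{\bf i}(\Xi^{\rm up}_w(b)) = -\Psi_{\bf i}(b)$ for $b \in \widetilde{\mathcal{B}}_w(k\lambda)$ are pairwise distinct (by injectivity of the Kashiwara embedding), so the valuation of any nonzero linear combination equals the minimum valuation among the terms with nonzero coefficient. This gives the first equality $\widetilde{\mathcal{S}}_{\bf i}^{(\lambda, w)} = \omega(S(X(w), \mathcal{L}_\lambda, \tilde{v}_{\bf i}, \tau_\lambda))$, and the equalities for $\widetilde{\mathcal{C}}_{\bf i}^{(\lambda, w)}$ and $\widetilde{\Delta}_{\bf i}^{(\lambda, w)}$ follow by passing to the smallest closed cone generated by this set and by slicing at $k = 1$.

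The main obstacle is the careful bookkeeping in the first step: one must verify that applying $\ast$ on $U(\mathfrak{u}^-)$ together with the coordinate reversal $s_k = t_{r-k+1}$ really identifies the two pullbacks, and that the orders $\prec$ and $<$ precisely swap under this reversal. Once this identification is cleanly set up, the statement becomes a formal consequence of Proposition \ref{string polytopes}(1), Proposition \ref{p:*_property}, and the definition of the Kashiwara embedding.
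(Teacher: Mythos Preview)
The paper does not give its own proof of this proposition; it simply cites \cite[Sect.~4]{FN}. Your argument is correct and is the natural one: reduce part (1) to Proposition \ref{string polytopes}(1) via Kashiwara's involution $\ast$ and the coordinate reversal $s_k = t_{r-k+1}$, using Proposition \ref{p:*_property} to transport the dual basis. The identification $f(t_1,\ldots,t_r) = g(t_r,\ldots,t_1)$ (with $g$ the ${\bf i}^{\rm op}$-pullback of $\Xi^{\rm up}_{w^{-1}}(b^\ast)$) and the observation that $(a_1,\ldots,a_r)$ is $\prec$-maximal iff $(a_r,\ldots,a_1)$ is $<$-maximal are both checked correctly, yielding $\tilde{v}_{\bf i}(\Xi^{\rm up}_w(b)) = v_{{\bf i}^{\rm op}}(\Xi^{\rm up}_{w^{-1}}(b^\ast))^{\rm op} = -\Phi_{{\bf i}^{\rm op}}(b^\ast)^{\rm op} = -\Psi_{\bf i}(b)$. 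Part (2) then follows from part (1) together with Proposition \ref{vanishing}(2) and the injectivity of $\Psi_{\bf i}$, exactly as you outline; note only that this step implicitly uses condition (D) for the perfect basis (so that $\tau_\lambda$ and Proposition \ref{vanishing} are available), which is already built into the setup preceding Lemma \ref{d:iota}.
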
\vspace{2mm}

\begin{rem}\normalfont
The author and Oya \cite{FO} proved that the valuations $v_{\bf i}, \tilde{v}_{\bf i}$ are also identical to ones given by counting the order of zeros along certain sequences of subvarieties of $X(w)$.
\end{rem}\vspace{2mm}

\section{Orbit Lie algebras}

In this section, we apply the folding procedure to crystal bases. First we recall from \cite{FRS, FSS} the definition of orbit Lie algebras. Recall that $\mathfrak{g}$ is assumed to be a finite-dimensional simple Lie algebra. We further assume that $\mathfrak{g}$ is of simply-laced type. Denote by $C = (c_{i, j})_{i, j \in I}$ the Cartan matrix of $\mathfrak{g}$, where $I$ is an index set for the vertices of the Dynkin diagram. Let $\omega \colon I \rightarrow I$ be a bijection of order $L$ satisfying $c_{\omega(i), \omega(j)} = c_{i, j}$ for all $i, j \in I$; such a bijection $\omega$ is called a {\it Dynkin diagram automorphism}. It induces a Lie algebra automorphism $\omega \colon \mathfrak{g} \xrightarrow{\sim} \mathfrak{g}$ of order $L$ defined by: \[\omega(E_i) = E_{\omega(i)},\ \omega(F_i) = F_{\omega(i)},\ \omega(h_i) = h_{\omega(i)}\] for $i \in I$; note that the Cartan subalgebra $\mathfrak{t}$ is invariant under $\omega$. Also, we define $\omega^\ast \colon \mathfrak{t}^\ast \xrightarrow{\sim} \mathfrak{t}^\ast$ by: $\omega^\ast(\lambda)(h) = \lambda(\omega^{-1}(h))$ for $\lambda \in \mathfrak{t}^\ast$ and $h \in \mathfrak{t}$. In this paper, we always impose the following orthogonality condition on $\omega$:
\begin{enumerate}
\item[{(O)}] $c_{i, j} = 0$\ {\rm for\ all}\ $i \neq j$\ {\rm in\ the\ same}\ $\omega$-{\rm orbit}.
\end{enumerate}
Let us fix a complete set $\breve{I} \subset I$ of representatives for the $\omega$-orbits in $I$. We set $m_i \coloneqq \min\{k \in \mathbb{Z}_{>0} \mid \omega^k(i) = i\}$ for $i \in I$, and then set \[\breve{c}_{i, j} \coloneqq \sum_{0 \le k < m_j} c_{i, \omega^k(j)}\] for $i, j \in \breve{I}$. Then we can verify that the matrix $\breve{C} \coloneqq (\breve{c}_{i, j})_{i, j \in \breve{I}}$ is an indecomposable Cartan matrix of finite type (see the list below). The finite-dimensional simple Lie algebra $\breve{\mathfrak{g}}$ with Cartan matrix $\breve{C}$ is called the {\it orbit Lie algebra} associated to $\omega$.
\begin{table}[h]
\begin{center}
\begin{tabular}{c|c} 
Dynkin diagram of $\mathfrak{g}$ & Dynkin diagram of $\breve{\mathfrak{g}}$\\\hline
\begin{xy}
\ar@{-} (20,4) *++!D{} *\cir<3pt>{};
(30,4) *++!D!R(0.4){} *\cir<3pt>{}="A",
\ar@{-} "A";(40,4) *++!D{} *\cir<3pt>{}="B"
\ar@{-} "B";(45,4) \ar@{.} (45,4);(50,4)^*!U{}
\ar@{-} (50,4);(55,4) *++!D{} *\cir<3pt>{}="C"
\ar@{-} "C";(64,0) *++!L{} *\cir<3pt>{}="D"
\ar@{-} "D";(55,-4) *++!D{} *\cir<3pt>{}="E"
\ar@{-} "E";(50,-4) \ar@{.} (50,-4);(45,-4)^*!U{}
\ar@{-} (45,-4);(40,-4) *++!D{} *\cir<3pt>{}="F"
\ar@{-} "F";(30,-4) *++!D{} *\cir<3pt>{}="G"
\ar@{-} "G";(20,-4) *++!D{} *\cir<3pt>{}
\end{xy}
 & \begin{xy}
\ar@{-} (50,0) *++!D{} *\cir<3pt>{};
(60,0) *++!D{} *\cir<3pt>{}="B"
\ar@{-} "B";(70,0) *++!D{} *\cir<3pt>{}="C"
\ar@{-} "C";(75,0) \ar@{.} (75,0);(80,0)^*!U{}
\ar@{-} (80,0);(85,0) *++!D{} *\cir<3pt>{}="D"
\ar@{=>} "D";(95,0) *++!D{} *\cir<3pt>{}="E"
\end{xy}\\
\begin{xy}
\ar@{-} (20,0) *++!D{} *\cir<3pt>{};
(30,0) *++!D!R(0.4){} *\cir<3pt>{}="A",
\ar@{-} "A";(40,0) *++!D{} *\cir<3pt>{}="B"
\ar@{-} "B";(45,0) \ar@{.} (45,0);(50,0)^*!U{}
\ar@{-} (50,0);(55,0) *++!D{} *\cir<3pt>{}="C"
\ar@{-} "C";(64,4) *++!L{} *\cir<3pt>{}
\ar@{-} "C";(64,-4) *++!L{} *\cir<3pt>{},
\end{xy} & \begin{xy}
\ar@{-} (50,0) *++!D{} *\cir<3pt>{};
(60,0) *++!D{} *\cir<3pt>{}="B"
\ar@{-} "B";(70,0) *++!D{} *\cir<3pt>{}="C"
\ar@{-} "C";(75,0) \ar@{.} (75,0);(80,0)^*!U{}
\ar@{-} (80,0);(85,0) *++!D{} *\cir<3pt>{}="D"
\ar@{<=} "D";(95,0) *++!D{} *\cir<3pt>{}="E"
\end{xy}\\
\begin{xy}
\ar@{-} (20,0) *++!D{} *\cir<3pt>{};
(30,0) *++!D!R(0.4){} *\cir<3pt>{}="C",
\ar@{-} "C";(39,4) *++!L{} *\cir<3pt>{}="D"
\ar@{-} "C";(39,-4) *++!L{} *\cir<3pt>{}="E",
\ar@{-} "D";(49,4) *++!L{} *\cir<3pt>{}
\ar@{-} "E";(49,-4) *++!L{} *\cir<3pt>{}
\end{xy} & \begin{xy}
\ar@{-} (50,0) *++!D{} *\cir<3pt>{};
(60,0) *++!D{} *\cir<3pt>{}="B"
\ar@{<=} "B";(70,0) *++!D{} *\cir<3pt>{}="C"
\ar@{-} "C";(80,0) *++!D{} *\cir<3pt>{}
\end{xy}\\
\begin{xy}
\ar@{-} (29,0) *++!D{} *\cir<3pt>{};
(20,0) *++!D!R(0.4){} *\cir<3pt>{}="C",
\ar@{-} "C";(29,4) *++!L{} *\cir<3pt>{}
\ar@{-} "C";(29,-4) *++!L{} *\cir<3pt>{}
\end{xy} & \begin{xy}
\ar@3{<-} *++!D{} *\cir<3pt>{};
(10,0) *++!D{} *\cir<3pt>{}
\end{xy}
\end{tabular}
\end{center}
\caption{The list of nontrivial Dynkin diagram automorphisms satisfying assumption (O).}\label{table1}
\end{table}

Let $U_q(\breve{\mathfrak g})$ be the quantized enveloping algebra of $\breve{\mathfrak g}$ with generators $\breve{e}_i, \breve{f}_i, \breve{t}_i, \breve{t}_i ^{-1}$, $i \in \breve{I}$, and $U_q(\breve{\mathfrak{u}}^-)$ the $\mathbb{Q}(q)$-subalgebra of $U_q(\breve{\mathfrak g})$ generated by $\{\breve{f}_i \mid i \in \breve{I}\}$. Denote by $\breve{\mathcal{B}}(\infty)$ the crystal basis of $U_q(\breve{\mathfrak{u}}^-)$, by $\breve{b}_\infty \in \breve{\mathcal{B}}(\infty)$ the element corresponding to $1 \in U_q(\breve{\mathfrak u}^-)$, and by $\tilde{e}_i, \tilde{f}_i \colon \breve{\mathcal{B}}(\infty) \cup \{0\} \rightarrow \breve{\mathcal{B}}(\infty) \cup \{0\}$, $i \in \breve{I}$, the Kashiwara operators. Then, the crystal basis $\breve{\mathcal{B}}(\infty)$ is realized as a specific subset of $\mathcal{B}(\infty)$; we recall this realization, following \cite{NS1, NS2, Sag}. The Dynkin diagram automorphism $\omega$ induces a $\mathbb{Q}(q)$-algebra automorphism $\omega \colon U_q(\mathfrak{g}) \xrightarrow{\sim} U_q(\mathfrak{g})$ of order $L$ defined by: \[\omega(e_i) = e_{\omega(i)},\ \omega(f_i) = f_{\omega(i)},\ \omega(t_i) = t_{\omega(i)}\] for $i \in I$; remark that $\omega$ preserves $U_q(\mathfrak{u}^-)$. We see from \cite[Sect.\ 3.4]{NS1} that this automorphism induces a natural bijection $\omega \colon \mathcal{B}(\infty) \rightarrow \mathcal{B}(\infty)$ such that 
\begin{equation}\label{omega negative part}
\omega \circ \tilde{e}_i = \tilde{e}_{\omega(i)} \circ \omega\ {\rm and}\ \omega \circ \tilde{f}_i = \tilde{f}_{\omega(i)} \circ \omega
\end{equation}
for all $i \in I$. Let us define operators $\tilde{e}^\omega _i, \tilde{f}^\omega _i \colon \mathcal{B}(\infty) \cup \{0\} \rightarrow \mathcal{B}(\infty) \cup \{0\}$ for $i \in I$ by:
\begin{equation}\label{omega Kashiwara operator}
\tilde{e}^\omega _i = \prod_{0 \le k < m_i} \tilde{e}_{\omega^k(i)}\ {\rm and}\ \tilde{f}^\omega _i = \prod_{0 \le k < m_i} \tilde{f}_{\omega^k(i)};
\end{equation}
note that the operators $\tilde{e}_{i}, \tilde{e}_{\omega(i)}, \ldots, \tilde{e}_{\omega^{m_i -1}(i)}$ (resp., $\tilde{f}_{i}, \tilde{f}_{\omega(i)}, \ldots, \tilde{f}_{\omega^{m_i -1}(i)}$) commute with each other by assumption (O); these operators $\tilde{e}^\omega _i, \tilde{f}^\omega _i$ are called the $\omega$-{\it Kashiwara operators}. Let $\breve{\mathfrak{t}} \subset \breve{\mathfrak{g}}$ be a Cartan subalgebra, $\{\breve{\alpha}_i \in \breve{\mathfrak{t}}^\ast \mid i \in \breve{I}\}$ the set of simple roots, $\{\breve{h}_i \in \breve{\mathfrak{t}} \mid i \in \breve{I}\}$ the set of simple coroots, and then set $\mathfrak{t}^0 \coloneqq \{h \in \mathfrak{t} \mid \omega(h) = h\}$ $(\mathfrak{t}^\ast)^0 \coloneqq \{\lambda \in \mathfrak{t}^\ast \mid \omega^\ast(\lambda) = \lambda\}$. As in \cite[Sect.\ 2]{FRS}, we obtain $\mathbb{C}$-linear isomorphisms $P_\omega \colon \mathfrak{t}^0 \xrightarrow{\sim} \breve{\mathfrak{t}}$ and $P_\omega ^\ast \colon \breve{\mathfrak{t}}^\ast \xrightarrow{\sim} (\mathfrak{t}^0)^\ast \simeq (\mathfrak{t}^\ast)^0$ such that 
\begin{align*}
P_\omega ^{-1}(\breve{h}_i) = \frac{1}{m_i} \sum_{0 \le k < m_i} h_{\omega^k(i)},\ P_\omega ^\ast(\breve{\alpha}_i) = \sum_{0 \le k < m_i} \alpha_{\omega^k(i)},\ {\rm and}\ (P_\omega ^\ast(\breve{\lambda}))(h) = \breve{\lambda}(P_\omega(h))
\end{align*}
for $i \in \breve{I}$, $\breve{\lambda} \in \breve{\mathfrak{t}}^\ast$, and $h \in \mathfrak{t}^0$. We denote by $\breve{W}$ the Weyl group of $\breve{\mathfrak g}$, and set \[\widetilde{W} \coloneqq \{w \in W \mid \omega^\ast \circ w = w \circ \omega^\ast\ {\rm on}\ \mathfrak{t}^\ast\}.\] Then we see from \cite[Sect.\ 3]{FRS} that there exists a group isomorphism $\Theta \colon \breve{W} \xrightarrow{\sim} \widetilde{W}$ such that $\Theta(\breve{w}) = P_\omega ^\ast \circ \breve{w} \circ (P_\omega ^\ast)^{-1}$ on $(\mathfrak{t}^\ast)^0$ for all $\breve{w} \in \breve{W}$.

\vspace{2mm}\begin{prop}[{\cite[Theorem 3.4.1]{NS1}}]\label{orbit infinity}
Let \[\mathcal{B}^0(\infty) \coloneqq \{b \in \mathcal{B}(\infty) \mid \omega(b) = b\}\] denote the fixed point subset by $\omega$.
\begin{enumerate}
\item[{\rm (1)}] The set $\mathcal{B}^0(\infty) \cup \{0\}$ is stable under the $\omega$-Kashiwara operators $\tilde{e}_i ^\omega, \tilde{f}_i ^\omega$ for all $i \in I$. 
\item[{\rm (2)}] There exists a unique bijective map $P_\infty \colon \mathcal{B}^0(\infty) \cup \{0\} \rightarrow \breve{\mathcal{B}}(\infty) \cup \{0\}$ such that 
\begin{align*}
P_\infty(b_\infty) = \breve{b}_\infty,\ P_\infty \circ \tilde{e}_i ^\omega = \tilde{e}_i \circ P_\infty,\ {\it and}\ P_\infty \circ \tilde{f}_i ^\omega = \tilde{f}_i \circ P_\infty
\end{align*}
for all $i \in \breve{I}$.
\item[{\rm (3)}] The equality \[P_\infty(\mathcal{B}_{\Theta(w)} ^0(\infty)) = \breve{\mathcal{B}}_w(\infty)\] holds for every $w \in \breve{W}$, where $\mathcal{B}_{\Theta(w)} ^0(\infty) \coloneqq \mathcal{B}^0(\infty) \cap \mathcal{B}_{\Theta(w)}(\infty)$.
\end{enumerate}
\end{prop}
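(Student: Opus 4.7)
The plan is to prove parts (1), (2), (3) successively, broadly following Naito--Sagaki \cite{NS1}. For part (1), migrating the $\omega$-equivariance \eqref{omega negative part} through the product $\tilde{f}_i^\omega = \prod_{0 \le k < m_i} \tilde{f}_{\omega^k(i)}$ yields
\[
\omega \circ \tilde{f}_i^\omega = \Bigl(\prod_{0 \le k < m_i} \tilde{f}_{\omega^{k+1}(i)}\Bigr) \circ \omega.
\]
Assumption (O) makes the operators $\tilde{f}_{\omega^k(i)}$ for $0 \le k < m_i$ pairwise commute, so the cyclic shift $k \mapsto k+1$ together with $\omega^{m_i}(i) = i$ is merely a relabeling of the factors; the right-hand side equals $\tilde{f}_i^\omega \circ \omega$. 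The same computation applies to $\tilde{e}_i^\omega$, and this proves stability of $\mathcal{B}^0(\infty) \cup \{0\}$.

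For part (2), the strategy is to endow $\mathcal{B}^0(\infty)$ with the structure of a $\breve{\mathfrak{g}}$-crystal and then identify it with $\breve{\mathcal{B}}(\infty)$ via Kashiwara's characterization of $\mathcal{B}(\infty)$ applied in type $\breve{\mathfrak{g}}$. The transported weight function is $(P_\omega^\ast)^{-1}(\mathrm{wt}(b))$, well-defined because $\mathrm{wt}(b) \in (\mathfrak{t}^\ast)^0$ whenever $\omega(b) = b$; the orbit $\varepsilon_i$ and $\varphi_i$ data for $i \in \breve{I}$ are taken to be $\varepsilon_i$ and $\varphi_i$ themselves. A short computation using $\omega(b) = b$ together with the identity $\varepsilon_j(\omega(b)) = \varepsilon_{\omega^{-1}(j)}(b)$ yields $\varepsilon_{\omega^k(i)}(b) = \varepsilon_i(b)$ for all $k$, so the $m_i$ pairwise commuting factors of $\tilde{e}_i^\omega$ each lower $\varepsilon$ by $1$ in parallel, and analogously for $\tilde{f}_i^\omega$ and $\varphi$. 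Combined with $P_\omega^\ast(\breve{\alpha}_i) = \sum_k \alpha_{\omega^k(i)}$ and $\langle P_\omega^\ast(\breve{\lambda}), h_{\omega^k(i)}\rangle = \langle \breve{\lambda}, \breve{h}_i\rangle$, the $\breve{\mathfrak{g}}$-crystal axioms on $\mathcal{B}^0(\infty)$ reduce to the $\mathfrak{g}$-axioms on $\mathcal{B}(\infty)$. Kashiwara's uniqueness theorem then produces a unique strict $\breve{\mathfrak{g}}$-crystal morphism $P_\infty$ with $P_\infty(b_\infty) = \breve{b}_\infty$, forced to be bijective by the connectedness of $\mathcal{B}^0(\infty)$ from $b_\infty$ under the $\omega$-Kashiwara operators. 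This connectedness is the main obstacle, and it can be established via the $\omega$-equivariance of Lusztig's canonical basis, $\omega(G^{\mathrm{low}}(b)) = G^{\mathrm{low}}(\omega(b))$: the $\omega$-fixed canonical basis vectors form a basis of $U(\mathfrak{u}^-)^\omega$, which is canonically identified with $U(\breve{\mathfrak{u}}^-)$ by the standard folding at $q = 1$, supplying the dimension match.

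Part (3) is proved by induction on $r = \ell(w)$, the base case $w = e$ being immediate. For the inductive step, write $w = s_i w'$ with $i \in \breve{I}$ and $\ell(w') = r - 1$; then $\Theta(s_i) = \prod_{0 \le k < m_i} s_{\omega^k(i)}$ is a product of commuting simple reflections, and the defining description of Demazure crystals yields
\[
\mathcal{B}_{\Theta(w)}(\infty) = \bigcup_{a_0, \ldots, a_{m_i-1} \ge 0} \tilde{f}_i^{a_0} \tilde{f}_{\omega(i)}^{a_1} \cdots \tilde{f}_{\omega^{m_i-1}(i)}^{a_{m_i-1}} \mathcal{B}_{\Theta(w')}(\infty),
\]
together with the analogous description $\breve{\mathcal{B}}_w(\infty) = \bigcup_{a \ge 0} \tilde{f}_i^a \breve{\mathcal{B}}_{w'}(\infty)$. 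Given $b \in \mathcal{B}_{\Theta(w)}^0(\infty)$, set $a \coloneqq \varepsilon_i(b) = \varepsilon_{\omega^k(i)}(b)$ (equal by part (2)); then $b'' \coloneqq (\tilde{e}_i^\omega)^a b$ is nonzero, $\omega$-fixed by stability, and lies in $\mathcal{B}_{\Theta(w')}(\infty)$ by successive application of the standard Demazure crystal identity along the commuting chain of simple reflections in $\Theta(s_i)$. Writing $b = (\tilde{f}_i^\omega)^a b''$ and applying the inductive hypothesis to $b''$ gives $P_\infty(b) = \tilde{f}_i^a P_\infty(b'') \in \breve{\mathcal{B}}_w(\infty)$; the reverse inclusion follows dually through $P_\infty^{-1}$. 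Granted the connectedness/bijectivity step in part (2), which is the main obstacle throughout, parts (1) and (3) are routine.
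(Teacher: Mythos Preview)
The paper does not prove this proposition; it is quoted from \cite[Theorem 3.4.1]{NS1}. So the comparison is between your sketch and the Naito--Sagaki argument. Parts (1) and (3) of your outline are fine and essentially standard once (2) is in hand.

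The difficulty is your treatment of bijectivity in part (2). You assert that the $\omega$-fixed canonical basis vectors form a basis of $U(\mathfrak{u}^-)^\omega$, and that the latter is ``canonically identified with $U(\breve{\mathfrak{u}}^-)$ by the standard folding at $q=1$''. Both claims are false as stated. First, since $\omega$ permutes the canonical basis, the fixed subspace $U(\mathfrak{u}^-)^\omega$ is spanned by \emph{orbit sums} $\sum_{k} G^{\mathrm{low}}(\omega^k(b))$; its graded dimension counts $\omega$-orbits, not $\omega$-fixed points. The fixed basis vectors span only a proper subspace in general, so no dimension match with $\mathcal{B}^0(\infty)$ follows. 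Second, even granting a description of $U(\mathfrak{u}^-)^\omega$, it is tied to the \emph{fixed point} Lie subalgebra $\mathfrak{g}^\omega$, not to the \emph{orbit} Lie algebra $\breve{\mathfrak{g}}$; these are Langlands dual (transposed Cartan matrices), and in particular $U((\mathfrak{u}^-)^\omega)$ is not $U(\breve{\mathfrak{u}}^-)$. Moreover $U(\mathfrak{u}^-)^\omega$ is strictly larger than $U((\mathfrak{u}^-)^\omega)$ in general. So the counting argument collapses, and with it your proof of connectedness of $\mathcal{B}^0(\infty)$ and of bijectivity of $P_\infty$.

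In \cite{NS1} (and the companion \cite{Sag}), the bijection is obtained by working with an explicit combinatorial model for the crystals (Lakshmibai--Seshadri paths), where the $\omega$-action is transparent and the fixed-point subset can be matched directly with the path model for $\breve{\mathfrak{g}}$; Kashiwara's characterization of $\mathcal{B}(\infty)$ is then invoked only after the requisite crystal axioms have been verified on the path side. If you want to avoid path models, a viable alternative is to check Kashiwara's axioms for $\mathcal{B}(\infty)$ directly on $\mathcal{B}^0(\infty)$ (including the embedding into $\mathcal{B}^0(\infty)\otimes \mathcal{B}_i$ coming from the $\ast$-twisted operators), but this requires substantially more than the sketch you give.
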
\vspace{2mm}

For $i \in \breve{I}$ and $b \in \mathcal{B}^0(\infty)$, we set \[\varepsilon_i ^\omega(b) \coloneqq \max\{a \in \mathbb{Z}_{\ge 0} \mid (\tilde{e}^\omega _i)^a b \neq 0\}.\] The properties of $P_\infty$ in Proposition \ref{orbit infinity} (2) imply the equality \[\varepsilon_i ^\omega(b) = \varepsilon_i (P_\infty(b))\] for every $i \in \breve{I}$ and $b \in \mathcal{B}^0(\infty)$. 

\vspace{2mm}\begin{prop}\label{folding of epsilon}
The equality \[\varepsilon_i ^\omega(b) = \varepsilon_{\omega^k(i)}(b)\] holds for every $i \in \breve{I}$, $k \in \mathbb{Z}_{\ge0}$, and $b \in \mathcal{B}^0(\infty)$.
\end{prop}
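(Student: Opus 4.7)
The plan is to split the proposition into two independent equalities: (A) $\varepsilon_{\omega^k(i)}(b)=\varepsilon_i(b)$ for every $k\ge 0$, and (B) $\varepsilon_i^\omega(b)=\varepsilon_i(b)$; both for $i\in\breve{I}$ and $b\in\mathcal{B}^0(\infty)$. Their combination is the assertion of the proposition.

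Equality (A) is immediate from the intertwining relation~(\ref{omega negative part}). Since $\omega(b)=b$ and $\omega\colon\mathcal{B}(\infty)\cup\{0\}\to\mathcal{B}(\infty)\cup\{0\}$ is a bijection with $\omega(0)=0$, for each $j\in I$ and $a\in\mathbb{Z}_{\ge 0}$ we have $\omega(\tilde{e}_j^{\,a} b)=\tilde{e}_{\omega(j)}^{\,a} b$, so $\tilde{e}_j^{\,a} b=0$ if and only if $\tilde{e}_{\omega(j)}^{\,a} b=0$. Hence $\varepsilon_j(b)=\varepsilon_{\omega(j)}(b)$, and iterating $k$ times gives~(A).

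For equality (B), the orthogonality condition~(O) guarantees that $\tilde{e}_i,\tilde{e}_{\omega(i)},\dots,\tilde{e}_{\omega^{m_i-1}(i)}$ commute pairwise, so by~(\ref{omega Kashiwara operator})
\[
(\tilde{e}_i^\omega)^a b \;=\; \tilde{e}_i^{\,a}\,\tilde{e}_{\omega(i)}^{\,a}\cdots\tilde{e}_{\omega^{m_i-1}(i)}^{\,a}\, b.
\]
If $a>\varepsilon_i(b)$, then by commutativity $\tilde{e}_i^{\,a}$ can be applied first, giving $0$; hence $\varepsilon_i^\omega(b)\le\varepsilon_i(b)$. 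For the reverse inequality, set $a=\varepsilon_i(b)$; by~(A) we have $\varepsilon_{\omega^l(i)}(b)=a$ for every $l$. I would then perform a right-to-left induction on the partial product $b_l\coloneqq\tilde{e}_{\omega^l(i)}^{\,a}\cdots\tilde{e}_{\omega^{m_i-1}(i)}^{\,a} b$, showing that $b_l\neq 0$ and that $\varepsilon_{\omega^{l-1}(i)}(b_l)=a$, using that in simply-laced type the Kashiwara operators for pairwise-orthogonal simple roots preserve each other's $\varepsilon$-values. Then $b_0=(\tilde{e}_i^\omega)^a b\neq 0$, proving~(B).

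The main obstacle is the latter invariance property: for orthogonal $i,j$ and $b\in\mathcal{B}(\infty)$ with $\tilde{e}_i b\neq 0$ one has $\varepsilon_j(\tilde{e}_i b)=\varepsilon_j(b)$. This is a standard consequence of the commutation relations $\tilde{e}_i\tilde{e}_j=\tilde{e}_j\tilde{e}_i$ and $\tilde{e}_i\tilde{f}_j=\tilde{f}_j\tilde{e}_i$ for orthogonal simple roots in simply-laced type, combined with $\tilde{f}_i\tilde{e}_i b=b$ in $\mathcal{B}(\infty)$; it reflects the $A_1\times A_1$-substructure of the rank-two subcrystal generated by the orthogonal root vectors. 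Once this invariance is invoked, the inductive step in~(B), and hence the entire proposition, becomes a direct bookkeeping exercise.
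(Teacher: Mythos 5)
Your proposal is correct and follows essentially the same route as the paper's proof: both reduce everything to the single equality $\varepsilon_i^\omega(b)=\varepsilon_i(b)$, using the $\omega$-equivariance (\ref{omega negative part}) together with $\omega(b)=b$ to transport $\varepsilon$ along the orbit, and using the orthogonality condition (O) --- commutativity of the Kashiwara operators within an orbit and the resulting invariance of each $\varepsilon_{\omega^l(i)}$ under the others --- to compare $(\tilde{e}_i^\omega)^a$ with the individual powers $\tilde{e}_{\omega^l(i)}^a$. The only differences are organizational: the paper proves the inequality $\varepsilon_i(b)\le\varepsilon_i^\omega(b)$ by contradiction and handles general $k$ by replacing $\breve{I}$, whereas you argue directly by induction on the partial products and isolate the $\varepsilon$-invariance for orthogonal indices explicitly (this is precisely what the paper subsumes under ``by assumption (O)'').
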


\begin{proof}
Although this is proved in \cite[Lemma 2.3.2]{NS2}, we give a proof for the convenience of the reader. By replacing $\breve{I}$ if necessary, we may assume that $k = 0$. Since $(\tilde{e}^\omega _i)^a = \tilde{e}_{\omega^{m_i -1}(i)} ^a \cdots \tilde{e}_{\omega(i)} ^a \tilde{e}_i ^a$ for $a \in \z_{\ge 0}$ by assumption (O), the condition $(\tilde{e}^\omega _i)^{\varepsilon_i ^\omega(b)}b \neq 0$ implies that $\tilde{e}_i^{\varepsilon_i ^\omega(b)}b \neq 0$. Suppose, for a contradiction, that $\tilde{e}_i^{\varepsilon_i ^\omega(b)+1}b \neq 0$. Then we have 
\begin{align*}
\tilde{e}_{\omega^k(i)} ^{\varepsilon_i ^\omega(b)+1}b &= \tilde{e}_{\omega^k(i)} ^{\varepsilon_i ^\omega(b)+1} \omega^k(b)\quad({\rm since}\ b \in \mathcal{B}^0(\infty))\\
&= \omega^k(\tilde{e}_i ^{\varepsilon_i ^\omega(b)+1} b)\quad({\rm by\ equation}\ (\ref{omega negative part}))\\
&\neq 0,
\end{align*}
from which we deduce by assumption (O) that \[\tilde{e}_{\omega^k(i)} ^{\varepsilon_i ^\omega(b)+1} \cdots \tilde{e}_{\omega(i)} ^{\varepsilon_i ^\omega(b)+1} \tilde{e}_i ^{\varepsilon_i ^\omega(b)+1} b \neq 0\] for any $0 \le k \le m_i -1$; this contradicts the equality $(\tilde{e}_i ^\omega)^{\varepsilon_i ^\omega(b)+1} b = 0$. Therefore, the equality $\tilde{e}_i^{\varepsilon_i ^\omega(b)+1}b = 0$ holds, which implies that $\varepsilon_i (b) = \varepsilon_i ^\omega(b)$. This proves the proposition.
\end{proof}

Note that $\breve{P} \coloneqq (P_\omega ^\ast)^{-1}(P \cap (\mathfrak{t}^\ast)^0) \subset \breve{\mathfrak{t}}^\ast$ is identical to the weight lattice for $\breve{\mathfrak{g}}$. For $\lambda \in P_+ \cap (\mathfrak{t}^\ast)^0$, we have a natural bijective map $\omega \colon \mathcal{B}(\lambda) \rightarrow \mathcal{B}(\lambda)$, induced by the $\mathbb{Q}(q)$-algebra automorphism $\omega \colon U_q(\mathfrak{g}) \xrightarrow{\sim} U_q(\mathfrak{g})$, such that
\begin{equation}\label{omega highest weight module}
\omega \circ \tilde{e}_i = \tilde{e}_{\omega(i)} \circ \omega\ {\rm and}\ \omega \circ \tilde{f}_i = \tilde{f}_{\omega(i)} \circ \omega
\end{equation}
for all $i \in I$ (see \cite[Sect.\ 3.2]{NS1} and \cite[Sect.\ 3]{Sag}). Here we recall that $\pi_\lambda \colon \mathcal{B}(\infty) \twoheadrightarrow \mathcal{B}(\lambda) \cup \{0\}$ is the canonical map induced from the natural surjection $U_q(\mathfrak{u}^-) \twoheadrightarrow V_q(\lambda)$. If we set \[\mathcal{B}^0(\lambda) \coloneqq \{b \in \mathcal{B}(\lambda) \mid \omega(b) = b\},\] then it is easily checked that $\pi_\lambda(\mathcal{B}^0(\infty)) = \mathcal{B}^0(\lambda) \cup \{0\}$. For $\breve{\lambda} \in (P_\omega ^\ast)^{-1}(P_+ \cap (\mathfrak{t}^\ast)^0)$, let $\breve{V}_q(\breve{\lambda})$ denote the irreducible highest weight $U_q(\breve{\mathfrak g})$-module with highest weight $\breve{\lambda}$, $\breve{\mathcal{B}}(\breve{\lambda})$ the crystal basis of $\breve{V}_q(\breve{\lambda})$ with $b_{\breve{\lambda}} \in \breve{\mathcal{B}}(\breve{\lambda})$ the highest element, and $\tilde{e}_i, \tilde{f}_i \colon \breve{\mathcal{B}}(\breve{\lambda}) \cup \{0\} \rightarrow \breve{\mathcal{B}}(\breve{\lambda}) \cup \{0\}$, $i \in \breve{I}$, the Kashiwara operators. 

\vspace{2mm}\begin{prop}[{\cite[Proposition 3.2.1]{NS1}}]\label{compatibility with lambda}
Let $\lambda \in P_+ \cap (\mathfrak{t}^\ast)^0$.
\begin{enumerate}
\item[{\rm (1)}] The set $\mathcal{B}^0(\lambda) \cup \{0\}$ is stable under the $\omega$-Kashiwara operators $\tilde{e}_i ^\omega, \tilde{f}_i ^\omega \colon \mathcal{B}(\lambda) \cup \{0\} \rightarrow \mathcal{B}(\lambda) \cup \{0\}$ for all $i \in I$, defined in the same way as $\omega$-Kashiwara operators for $\mathcal{B}(\infty)$.
\item[{\rm (2)}] There exists a unique bijective map $P_\lambda \colon \mathcal{B}^0(\lambda) \cup \{0\} \rightarrow \breve{\mathcal{B}}(\breve{\lambda}) \cup \{0\}$ such that 
\begin{align*}
P_\lambda(b_\lambda) = b_{\breve{\lambda}},\ P_\lambda \circ \tilde{e}_i ^\omega = \tilde{e}_i \circ P_\lambda\ {\it and}\ P_\lambda \circ \tilde{f}_i ^\omega = \tilde{f}_i \circ P_\lambda
\end{align*}
for all $i \in \breve{I}$, where $\breve{\lambda} \coloneqq (P_\omega ^\ast)^{-1}(\lambda)$.
\item[{\rm (3)}] The following diagram is commutative$:$
\begin{align*}
\xymatrix{\mathcal{B}^0(\infty) \ar[d]_-{P_\infty} \ar[r]^-{\pi_\lambda} & \mathcal{B}^0(\lambda) \cup \{0\} \ar[d]_-{P_\lambda} \\
\breve{\mathcal{B}}(\infty) \ar[r]^-{\pi_{\breve{\lambda}}} & \breve{\mathcal{B}}(\breve{\lambda}) \cup \{0\},}
\end{align*}
where $\pi_{\breve{\lambda}}$ is the map induced from the natural surjective map $U_q(\breve{\mathfrak{u}}^-) \twoheadrightarrow \breve{V}_q(\breve{\lambda})$.
\item[{\rm (4)}] The equality \[P_\lambda(\mathcal{B}_{\Theta(w)} ^0(\lambda)) = \breve{\mathcal{B}}_{w}(\breve{\lambda})\] holds for all $w \in \breve{W}$, where $\mathcal{B}_{\Theta(w)} ^0(\lambda) \coloneqq \mathcal{B}^0(\lambda) \cap \mathcal{B}_{\Theta(w)}(\lambda)$ and $\breve{\mathcal{B}}_{w}(\breve{\lambda}) \subset \breve{\mathcal{B}}(\breve{\lambda})$ is the corresponding Demazure crystal. 
\end{enumerate}
\end{prop}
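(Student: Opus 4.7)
The plan is to derive all four parts from the corresponding statements for $\mathcal{B}^0(\infty)$ in Proposition \ref{orbit infinity}, using the canonical surjection $\pi_\lambda \colon \mathcal{B}(\infty) \twoheadrightarrow \mathcal{B}(\lambda) \cup \{0\}$ and its bijective restriction $\pi_\lambda \colon \widetilde{\mathcal{B}}(\lambda) \xrightarrow{\sim} \mathcal{B}(\lambda)$ recalled in Section 2.

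For part (1), I would expand $\tilde{f}_i^\omega = \tilde{f}_i \tilde{f}_{\omega(i)} \cdots \tilde{f}_{\omega^{m_i-1}(i)}$; by assumption (O) these operators pairwise commute, and the intertwining relation $\omega \circ \tilde f_i = \tilde f_{\omega(i)} \circ \omega$ on $\mathcal{B}(\lambda)$, combined with $\omega^{m_i}(i) = i$, yields $\omega(\tilde{f}_i^\omega b) = \tilde{f}_i^\omega \omega(b) = \tilde{f}_i^\omega b$ for every $b \in \mathcal{B}^0(\lambda)$; the same calculation handles $\tilde{e}_i^\omega$.

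For part (2), I would construct $P_\lambda$ by a lift-then-project procedure. Given $\bar b \in \mathcal{B}^0(\lambda)$, the bijection $\pi_\lambda \colon \widetilde{\mathcal{B}}(\lambda) \xrightarrow{\sim} \mathcal{B}(\lambda)$ supplies a unique $b \in \widetilde{\mathcal{B}}(\lambda)$ with $\pi_\lambda(b) = \bar b$; since $\pi_\lambda$ and $\omega$ commute (both are induced by the $\mathbb{Q}(q)$-algebra automorphism $\omega \colon U_q(\mathfrak{g}) \xrightarrow{\sim} U_q(\mathfrak{g})$) and $\omega$ permutes $\widetilde{\mathcal{B}}(\lambda)$, uniqueness forces $\omega(b) = b$, so $b \in \mathcal{B}^0(\infty)$. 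Set $P_\lambda(\bar b) := \pi_{\breve\lambda}(P_\infty(b))$ and $P_\lambda(0) := 0$. The non-vanishing $P_\lambda(\bar b) \neq 0$ is equivalent to $P_\infty(b) \in \widetilde{\breve{\mathcal{B}}}(\breve\lambda)$; this equivalence can be verified using Kashiwara's criterion $b' \in \widetilde{\mathcal{B}}(\lambda) \Leftrightarrow \varepsilon_i((b')^*) \le \langle \lambda, h_i\rangle$ for all $i$, the $\ast$-equivariance of $\omega$, Proposition \ref{folding of epsilon} (which gives $\varepsilon_i(P_\infty(c)) = \varepsilon_i(c)$ for $c \in \mathcal{B}^0(\infty)$, $i \in \breve I$), and the identity $\langle \breve\lambda, \breve h_i\rangle = \langle \lambda, h_i\rangle$ read off from the definition of $P_\omega$. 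The intertwining $P_\lambda \circ \tilde e_i^\omega = \tilde e_i \circ P_\lambda$ and its $\tilde f$-analogue, together with the bijectivity of $P_\lambda$, follow from the corresponding properties of $P_\infty$ in Proposition \ref{orbit infinity}(2) combined with the relation $\pi_\lambda \circ \tilde f_i^\omega = \tilde f_i^\omega \circ \pi_\lambda$, valid on the non-vanishing locus.

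Part (3) is immediate from this construction: for $b \in \mathcal{B}^0(\infty)$ with $\pi_\lambda(b) \neq 0$, $b$ itself is the unique lift of $\pi_\lambda(b)$ in $\widetilde{\mathcal{B}}(\lambda)$, so the defining formula gives $P_\lambda(\pi_\lambda(b)) = \pi_{\breve\lambda}(P_\infty(b))$; the case $\pi_\lambda(b) = 0$ requires $\pi_{\breve\lambda}(P_\infty(b)) = 0$, which is the converse direction of the equivalence used in part (2). Part (4) then follows by combining part (3) with Proposition \ref{orbit infinity}(3) and Proposition \ref{Demazure crystal}:
\[
P_\lambda(\mathcal{B}_{\Theta(w)}^0(\lambda)) = \pi_{\breve\lambda}(P_\infty(\mathcal{B}_{\Theta(w)}^0(\infty))) = \pi_{\breve\lambda}(\breve{\mathcal{B}}_w(\infty)) = \breve{\mathcal{B}}_w(\breve\lambda).
\]
The main technical obstacle is precisely the equivalence $b \in \widetilde{\mathcal{B}}(\lambda) \Leftrightarrow P_\infty(b) \in \widetilde{\breve{\mathcal{B}}}(\breve\lambda)$ for $b \in \mathcal{B}^0(\infty)$, which underpins both the well-definedness of $P_\lambda$ and the commutativity in (3); it requires simultaneously matching Kashiwara's vanishing criteria on the two sides through the folding of the Cartan data and the $\omega$-averaging built into the definition of $\tilde e_i^\omega$.
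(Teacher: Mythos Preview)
The paper does not prove this proposition; it is quoted verbatim from \cite[Proposition 3.2.1]{NS1} and used as a black box. So there is no ``paper's own proof'' to compare against, and your outline is in effect supplying an argument where the author chose to cite one.

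Your reduction to Proposition~\ref{orbit infinity} via the lift--project construction $P_\lambda(\bar b) := \pi_{\breve\lambda}(P_\infty(\pi_\lambda^{-1}(\bar b)))$ is the natural strategy, and parts (1), (3), (4) go through as you describe (with only minor $\{0\}$-bookkeeping to track in the chain for (4)). There is, however, a real gap in your verification of the key equivalence $b \in \widetilde{\mathcal{B}}(\lambda) \Leftrightarrow P_\infty(b) \in \widetilde{\breve{\mathcal{B}}}(\breve\lambda)$. Kashiwara's criterion on the $\breve{\mathfrak g}$-side reads $\varepsilon_i(P_\infty(b)^\ast) \le \langle \breve\lambda, \breve h_i\rangle$, and to match this with $\varepsilon_i(b^\ast) \le \langle \lambda, h_i\rangle$ via Proposition~\ref{folding of epsilon} you must know that $P_\infty(b)^\ast = P_\infty(b^\ast)$. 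That is exactly the content of Proposition~\ref{compatible with Kashiwara's involution} (the main theorem of \cite{NS2}), not merely ``the $\ast$-equivariance of $\omega$'' as you write. In the paper's logical ordering that result is stated \emph{after} the present proposition, and in the original literature \cite{NS1} predates \cite{NS2}, so the proof in \cite{NS1} cannot be the one you sketch. If you wish to keep your lift--project approach, you should either invoke Proposition~\ref{compatible with Kashiwara's involution} explicitly (and note that its proof in \cite{NS2} is independent of the present statement), or replace Kashiwara's $\ast$-criterion by a direct argument that $\mathcal{B}^0(\lambda)$, equipped with the $\omega$-Kashiwara operators, already satisfies the axioms characterising $\breve{\mathcal{B}}(\breve\lambda)$, which is closer in spirit to the original argument of Naito--Sagaki.
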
\vspace{2mm}

\begin{rem}\normalfont
The composite maps $\breve{\mathcal{B}}(\infty) \xrightarrow{P_\infty ^{-1}} \mathcal{B}^0(\infty) \hookrightarrow \mathcal{B}(\infty)$ and $\breve{\mathcal{B}}(\breve{\lambda}) \xrightarrow{P_\lambda ^{-1}} \mathcal{B}^0(\lambda) \hookrightarrow \mathcal{B}(\lambda)$ are identical to the maps arising from a similarity of crystal bases (see \cite[Sect.\ 5]{Kas5}). This similarity is a variant of what we consider in Section 6.
\end{rem}\vspace{2mm}

It is easily seen that $\omega \circ \ast = \ast \circ \omega$ on $U_q(\mathfrak{g})$, which implies the same equality on $\mathcal{B}(\infty)$. Hence it follows that $\mathcal{B}^0(\infty)^\ast = \mathcal{B}^0(\infty)$. We denote by $\ast \colon \breve{\mathcal{B}}(\infty) \rightarrow \breve{\mathcal{B}}(\infty)$ Kashiwara's involution on $\breve{\mathcal{B}}(\infty)$.

\vspace{2mm}\begin{prop}[{\cite[Theorem1]{NS2}}]\label{compatible with Kashiwara's involution}
The following diagram is commutative$:$
\begin{align*}
\xymatrix{\mathcal{B}^0(\infty) \ar[d]_-{P_\infty} \ar[r]^-{\ast} & \mathcal{B}^0(\infty) \ar[d]_-{P_\infty} \\
\breve{\mathcal{B}}(\infty) \ar[r]^-{\ast} & \breve{\mathcal{B}}(\infty).}
\end{align*}
\end{prop}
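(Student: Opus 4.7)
The plan is to reduce the commutativity of the diagram to the uniqueness assertion in Proposition \ref{orbit infinity}(2). First, I would verify that $\omega \circ \ast = \ast \circ \omega$ on $U_q(\mathfrak{g})$: this is immediate on the generators, since $\omega$ permutes $\{e_i, f_i, t_i^{\pm 1}\}$ via $i \mapsto \omega(i)$ while $\ast$ fixes $e_i, f_i$ and inverts $t_i$. Consequently, the same identity holds on $\mathcal{B}(\infty)$, so $\mathcal{B}^0(\infty)$ is $\ast$-stable, as already noted in the excerpt. I then introduce the auxiliary map $\Psi \colon \mathcal{B}^0(\infty) \cup \{0\} \to \breve{\mathcal{B}}(\infty) \cup \{0\}$ defined by $\Psi(b) := P_\infty(b^\ast)^\ast$ and $\Psi(0) := 0$. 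Proving the proposition is equivalent to showing $\Psi = P_\infty$, so by the uniqueness in Proposition \ref{orbit infinity}(2), it suffices to verify that $\Psi$ sends $b_\infty$ to $\breve{b}_\infty$ and intertwines $\tilde{e}_i^\omega, \tilde{f}_i^\omega$ with $\tilde{e}_i, \tilde{f}_i$ for every $i \in \breve{I}$.

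The normalization condition is immediate: using $b_\infty^\ast = b_\infty$ (a standard property of Kashiwara's involution, see \cite[Theorem 2.1.1]{Kas4}) together with $\breve{b}_\infty^\ast = \breve{b}_\infty$, one finds $\Psi(b_\infty) = P_\infty(b_\infty)^\ast = \breve{b}_\infty^\ast = \breve{b}_\infty$. The intertwining conditions $\Psi \circ \tilde{f}_i^\omega = \tilde{f}_i \circ \Psi$ and $\Psi \circ \tilde{e}_i^\omega = \tilde{e}_i \circ \Psi$ for $i \in \breve{I}$, after substituting $c := b^\ast$ and applying $\ast$ to both sides, transform into
\[
P_\infty\bigl((\tilde{f}_i^\omega)^\ast c\bigr) = \tilde{f}_i^\ast \bigl(P_\infty(c)\bigr), \qquad P_\infty\bigl((\tilde{e}_i^\omega)^\ast c\bigr) = \tilde{e}_i^\ast \bigl(P_\infty(c)\bigr) \qquad (c \in \mathcal{B}^0(\infty)),
\]
where $(-)^\ast := \ast \circ (-) \circ \ast$ denotes the $\ast$-conjugate. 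Thus the problem reduces to showing that $P_\infty$ intertwines the $\ast$-conjugated $\omega$-Kashiwara operators on $\mathcal{B}^0(\infty)$ with the $\ast$-Kashiwara operators on $\breve{\mathcal{B}}(\infty)$.

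The main obstacle is exactly this ``dual intertwining.'' My plan to overcome it is to mimic the proof of Proposition \ref{orbit infinity}(2) on the second (``$\ast$-'') crystal structure that $\mathcal{B}(\infty)$ carries (see \cite[Sect.\ 3]{Kas4}): the $\ast$-Kashiwara operators $(\tilde{e}_i^\ast, \tilde{f}_i^\ast)$ satisfy the same Kashiwara-crystal axioms as $(\tilde{e}_i, \tilde{f}_i)$, and since $\omega$ commutes with $\ast$, their $\omega$-analogues are precisely $(\tilde{e}_i^\omega)^\ast$ and $(\tilde{f}_i^\omega)^\ast$; assumption (O) still guarantees their commutativity within each $\omega$-orbit. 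The recognition argument underlying Proposition \ref{orbit infinity}(2), applied to this $\ast$-crystal structure, then produces a bijection $P_\infty^{(\ast)} \colon \mathcal{B}^0(\infty) \cup \{0\} \to \breve{\mathcal{B}}(\infty) \cup \{0\}$ characterized by $P_\infty^{(\ast)}(b_\infty) = \breve{b}_\infty$ and by intertwining the starred operators. To conclude, I would identify $P_\infty^{(\ast)}$ with $P_\infty$ by induction on the weight of $c \in \mathcal{B}^0(\infty)$: both maps agree at $b_\infty$, and the bicrystal compatibility forces them to agree on every $\omega$-orbit of crystal operators built from $b_\infty$. Once $P_\infty^{(\ast)} = P_\infty$ is established, applying this identity yields the required dual intertwining for $\Psi$, completing the proof.
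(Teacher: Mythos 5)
The paper does not actually prove this statement: it is quoted verbatim from [NS2, Theorem 1], so your attempt has to be judged as an independent proof. The preparatory part is fine: $\omega\circ\ast=\ast\circ\omega$ on $U_q(\mathfrak{g})$ and hence on $\mathcal{B}(\infty)$, the map $\Psi:=\ast\circ P_\infty\circ\ast$ is well defined with $\Psi(b_\infty)=\breve{b}_\infty$, and the proposition is indeed equivalent to $\Psi=P_\infty$, i.e.\ to the ``dual intertwining'' $P_\infty\circ(\tilde{f}_i^{\omega})^{\ast}=\tilde{f}_i^{\ast}\circ P_\infty$ and $P_\infty\circ(\tilde{e}_i^{\omega})^{\ast}=\tilde{e}_i^{\ast}\circ P_\infty$ on $\mathcal{B}^0(\infty)$.

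The gap is in the final step, and it is the whole content of the theorem. The map you call $P_\infty^{(\ast)}$, obtained by running the construction of Proposition \ref{orbit infinity} (2) on the $\ast$-crystal structures, is nothing but $\ast\circ P_\infty\circ\ast=\Psi$ itself (transport of structure along $\ast$), so producing it costs nothing; everything hinges on the identification $P_\infty^{(\ast)}=P_\infty$, and your justification --- ``induction on the weight'' plus ``bicrystal compatibility forces them to agree'' --- is circular. The uniqueness in Proposition \ref{orbit infinity} (2) characterizes maps intertwining the \emph{unstarred} operators, while $P_\infty^{(\ast)}$ is only known to intertwine the \emph{starred} ones, so neither uniqueness statement applies to the other map. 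In any induction you could propagate agreement of the two maps along unstarred $\omega$-Kashiwara operators only if $P_\infty^{(\ast)}$ were already known to commute with them, and along starred operators only if $P_\infty$ were --- and each of these assertions is exactly the proposition being proved. No formal argument from the defining properties of $P_\infty$ can deliver the compatibility of the two crystal structures; this requires genuine extra input about how $\ast$ interacts with the realization of $\breve{\mathcal{B}}(\infty)$ inside $\mathcal{B}(\infty)$, which is the nontrivial analysis carried out in [NS2] and which your outline does not supply.
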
\vspace{2mm}

The following is an immediate consequence of Propositions \ref{folding of epsilon} and \ref{compatible with Kashiwara's involution}.

\vspace{2mm}\begin{cor}\label{folding of epsilon ast}
The equality \[\varepsilon_i(P_\infty(b)^\ast) = \varepsilon_{\omega^k(i)} (b^\ast)\] holds for all $i \in \breve{I}$, $k \in \mathbb{Z}_{\ge 0}$, and $b \in \mathcal{B}^0(\infty)$.
\end{cor}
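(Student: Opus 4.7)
The plan is to chain together the two preceding results in a short sequence of identifications, using the fact that $\mathcal{B}^0(\infty)$ is stable under Kashiwara's involution $\ast$, which was noted just before Proposition~\ref{compatible with Kashiwara's involution} as a consequence of $\omega \circ \ast = \ast \circ \omega$ on $U_q(\mathfrak{g})$.

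First I would fix $b \in \mathcal{B}^0(\infty)$ and observe that $b^\ast \in \mathcal{B}^0(\infty)$ as well. Then Proposition~\ref{folding of epsilon} applied to $b^\ast$ yields
\[
\varepsilon_{\omega^k(i)}(b^\ast) \;=\; \varepsilon_i^\omega(b^\ast)
\]
for every $i \in \breve{I}$ and every $k \in \mathbb{Z}_{\ge 0}$. Next I would use the identity $\varepsilon_i^\omega(b') = \varepsilon_i(P_\infty(b'))$, valid for any $b' \in \mathcal{B}^0(\infty)$, which was recorded immediately after Proposition~\ref{orbit infinity}; applied to $b' = b^\ast$ this gives $\varepsilon_i^\omega(b^\ast) = \varepsilon_i(P_\infty(b^\ast))$.

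Finally, Proposition~\ref{compatible with Kashiwara's involution} asserts that the square relating $P_\infty$ and $\ast$ on $\mathcal{B}^0(\infty)$ and $\breve{\mathcal{B}}(\infty)$ commutes, so $P_\infty(b^\ast) = P_\infty(b)^\ast$. Substituting this into the previous equality and combining with the application of Proposition~\ref{folding of epsilon} yields
\[
\varepsilon_i\bigl(P_\infty(b)^\ast\bigr) \;=\; \varepsilon_i\bigl(P_\infty(b^\ast)\bigr) \;=\; \varepsilon_i^\omega(b^\ast) \;=\; \varepsilon_{\omega^k(i)}(b^\ast),
\]
which is exactly the claim. There is no real obstacle here; the whole content is that the two propositions are compatible in a straightforward way, and the only point one must be careful about is to apply Proposition~\ref{folding of epsilon} to $b^\ast$ rather than to $b$, which is legitimate precisely because of the $\ast$-stability of $\mathcal{B}^0(\infty)$.
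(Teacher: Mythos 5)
Your argument is correct and is exactly the chain of identifications the paper has in mind when it calls the corollary an immediate consequence of Propositions \ref{folding of epsilon} and \ref{compatible with Kashiwara's involution}: apply Proposition \ref{folding of epsilon} to $b^\ast$ (legitimate by the $\ast$-stability of $\mathcal{B}^0(\infty)$), use $\varepsilon_i^\omega = \varepsilon_i \circ P_\infty$, and commute $P_\infty$ with $\ast$. Nothing is missing.
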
\vspace{2mm}

Let $\{s_i \mid i \in I\} \subset W$ (resp., $\{s_i \mid i \in \breve{I}\} \subset \breve{W}$) be the set of simple reflections. If we take a reduced word ${\bf i} = (i_1, \ldots, i_r) \in \breve{I}^r$ for $w \in \breve{W}$, then we have \[\Theta(w) = \Theta(s_{i_1}) \cdots \Theta(s_{i_r}) = s_{i_{1, 1}} \cdots s_{i_{1, m_{i_1}}} \cdots s_{i_{r, 1}} \cdots s_{i_{r, m_{i_r}}},\] where we set $i_{k, l} \coloneqq \omega^{l-1}(i_k)$ for $1 \le k \le r$ and $1 \le l \le m_{i_k}$. It is easily verified that this is a reduced expression for $\Theta(w)$; we denote by $\Theta({\bf i})$ the corresponding reduced word $(i_{1, 1}, \ldots, i_{1, m_{i_1}}, \ldots, i_{r, 1}, \ldots, i_{r, m_{i_r}})$.

\vspace{2mm}\begin{cor}\label{Key corollary 1}
Let ${\bf i} = (i_1, \ldots, i_r) \in \breve{I}^r$ be a reduced word for $w \in \breve{W}$. Define an $\mathbb{R}$-linear injective map $\Upsilon_{\bf i} \colon \mathbb{R}^r \hookrightarrow \mathbb{R}^{m_{i_1} + \cdots + m_{i_r}}$ by$:$ \[\Upsilon_{\bf i}(a_1, \ldots, a_r) = (\underbrace{a_1, \ldots, a_1}_{m_{i_1}}, \ldots, \underbrace{a_r, \ldots, a_r}_{m_{i_r}}).\] Then, the equalities \[\Upsilon_{\bf i}(\Phi_{\bf i}(b)) = \Phi_{\Theta({\bf i})}(P_\infty ^{-1}(b))\ {\it and}\ \Upsilon_{\bf i}(\Psi_{\bf i}(b)) = \Psi_{\Theta({\bf i})}(P_\infty ^{-1}(b))\] hold for all $b \in \breve{\mathcal{B}}_w(\infty)$. In particular, the following equalities hold$:$ \[\Upsilon_{\bf i}(\Phi_{\bf i}(\breve{\mathcal{B}}_w(\infty))) = \Phi_{\Theta({\bf i})}(\mathcal{B}^0 _{\Theta(w)}(\infty)),\ {\it and}\ \Upsilon_{\bf i}(\Psi_{\bf i}(\breve{\mathcal{B}}_w(\infty))) = \Psi_{\Theta({\bf i})}(\mathcal{B}^0 _{\Theta(w)}(\infty)).\]
\end{cor}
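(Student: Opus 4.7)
The plan is to establish the $\Phi$-equality first, by induction on the batches of $\Theta({\bf i})$, then derive the $\Psi$-equality via Kashiwara's involution and Proposition \ref{compatible with Kashiwara's involution}. The ``in particular'' assertion is then immediate from the bijection $P_\infty \colon \mathcal{B}_{\Theta(w)}^0(\infty) \xrightarrow{\sim} \breve{\mathcal{B}}_w(\infty)$ of Proposition \ref{orbit infinity} (3). To set up the $\Phi$-part, write $\Phi_{\bf i}(b) = (a_1, \ldots, a_r)$ and introduce $b_0 \coloneqq b$, $b_k \coloneqq \tilde{e}_{i_k}^{a_k} b_{k-1} \in \breve{\mathcal{B}}_w(\infty)$, and $\tilde{b}_k \coloneqq P_\infty^{-1}(b_k) \in \mathcal{B}_{\Theta(w)}^0(\infty)$. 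The inductive claim is that, in the computation of $\Phi_{\Theta({\bf i})}(\tilde{b}_0)$, after processing the first $k$ batches of Kashiwara operators, the resulting element is precisely $\tilde{b}_k$ and every entry $c_{j,l}$ for $j \le k$ equals $a_j$.

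The passage from $\tilde{b}_{k-1}$ to $\tilde{b}_k$ is governed by Proposition \ref{orbit infinity} (2), which yields $(\tilde{e}_{i_k}^\omega)^{a_k}(\tilde{b}_{k-1}) = \tilde{b}_k$; using the definition (\ref{omega Kashiwara operator}) of $\tilde{e}_{i_k}^\omega$ together with the commutativity of Kashiwara operators for indices in a common $\omega$-orbit (assumption (O)), this rewrites as $\tilde{e}_{i_{k,1}}^{a_k} \tilde{e}_{i_{k,2}}^{a_k} \cdots \tilde{e}_{i_{k,m_{i_k}}}^{a_k}(\tilde{b}_{k-1}) = \tilde{b}_k$. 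To show $c_{k,l} = a_k$ for each $l$, I would prove two bounds. For the upper bound, commutativity lets me rearrange $\tilde{e}_{i_{k,l}}^{a_k+1} \tilde{e}_{i_{k,l-1}}^{a_k} \cdots \tilde{e}_{i_{k,1}}^{a_k}(\tilde{b}_{k-1}) = \tilde{e}_{i_{k,l-1}}^{a_k} \cdots \tilde{e}_{i_{k,1}}^{a_k}\bigl(\tilde{e}_{i_{k,l}}^{a_k+1}(\tilde{b}_{k-1})\bigr)$, and Proposition \ref{folding of epsilon} gives $\varepsilon_{i_{k,l}}(\tilde{b}_{k-1}) = \varepsilon_{i_k}^\omega(\tilde{b}_{k-1}) = \varepsilon_{i_k}(b_{k-1}) = a_k$, forcing this to vanish. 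The main technical point is the lower bound: assuming inductively $c_{k,1} = \cdots = c_{k,l-1} = a_k$, one must show $\tilde{e}_{i_{k,l}}^{a_k} \cdots \tilde{e}_{i_{k,1}}^{a_k}(\tilde{b}_{k-1}) \ne 0$. Here I would argue by contradiction: if this element vanished, then applying the remaining operators $\tilde{e}_{i_{k,l+1}}^{a_k} \cdots \tilde{e}_{i_{k,m_{i_k}}}^{a_k}$ to it would still give zero, but by commutativity the composite equals $\tilde{e}_{i_{k,1}}^{a_k} \cdots \tilde{e}_{i_{k,m_{i_k}}}^{a_k}(\tilde{b}_{k-1}) = \tilde{b}_k \ne 0$, a contradiction. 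This factorization sidesteps any direct assertion about how Kashiwara operators for orthogonal roots act on non-$\omega$-fixed elements.

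For the $\Psi$-equality, I would apply the $\Phi$-equality just proved to the reversed word ${\bf i}^{\rm op}$ (a reduced word for $w^{-1} \in \breve{W}$) and to the element $b^* \in \breve{\mathcal{B}}_{w^{-1}}(\infty)$. Proposition \ref{compatible with Kashiwara's involution} gives $P_\infty^{-1}(b^*) = P_\infty^{-1}(b)^*$, so I obtain $\Upsilon_{{\bf i}^{\rm op}}(\Phi_{{\bf i}^{\rm op}}(b^*)) = \Phi_{\Theta({\bf i}^{\rm op})}(P_\infty^{-1}(b)^*)$. Reversing both tuples turns the left-hand side into $\Upsilon_{\bf i}(\Psi_{\bf i}(b))$ via the easy identity $\Upsilon_{{\bf i}^{\rm op}}({\bf a})^{\rm op} = \Upsilon_{\bf i}({\bf a}^{\rm op})$. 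On the right-hand side, the reduced words $\Theta({\bf i}^{\rm op})$ and $\Theta({\bf i})^{\rm op}$ differ only by reversing the order of indices within each batch; since the $\Phi$-case guarantees that all string coordinates within a batch coincide, and commutativity (O) implies that commuting operators with equal exponents can be applied in any order without affecting the string computation of such elements, the two computations yield the same tuple, and the right-hand side becomes $\Psi_{\Theta({\bf i})}(P_\infty^{-1}(b))$, as required.
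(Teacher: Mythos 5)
Your proof is correct, and for the $\Phi$-equality it is essentially the paper's argument: the paper inducts on $r$, computing $\varepsilon_{i_{1,k}}$ of the partially raised element via assumption (O) and Proposition \ref{folding of epsilon} and then stripping off the first batch; your batch-by-batch induction with explicit upper and lower bounds just spells this out, and your lower bound---factoring the nonzero full batch $(\tilde{e}^{\omega}_{i_k})^{a_k}\tilde{b}_{k-1}=\tilde{b}_k$ through the partial product---has the mild advantage of needing only commutativity of the operators rather than the preservation of $\varepsilon_{i_{k,l}}$ under raising by orthogonal indices, which the paper uses tacitly. Where you genuinely diverge is the $\Psi$-equality: the paper reruns the same induction with Corollary \ref{folding of epsilon ast} in place of Proposition \ref{folding of epsilon}, whereas you deduce it formally from the $\Phi$-case applied to $({\bf i}^{\rm op},b^\ast)$, using Proposition \ref{compatible with Kashiwara's involution} and the identity $\Upsilon_{{\bf i}^{\rm op}}({\bf a})^{\rm op}=\Upsilon_{\bf i}({\bf a}^{\rm op})$. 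This avoids repeating the batch analysis, at the price of comparing the words $\Theta({\bf i}^{\rm op})$ and $\Theta({\bf i})^{\rm op}$, which differ by within-batch reversal. That comparison is correct, but ``commuting operators with equal exponents can be applied in any order'' by itself only yields the lower bounds for the reversed-order string computation; the clean way to close it is to observe that, by the $\Phi$-case, each batch in the $\Theta({\bf i}^{\rm op})$-computation starts from an $\omega$-fixed element (the $P_\infty^{-1}$-image of the corresponding partially raised element of $\breve{\mathcal{B}}(\infty)$), so Proposition \ref{folding of epsilon} and your two-bound argument apply verbatim to the reversed batch---equivalently, invoke the standard rank-two fact that $\varepsilon_i$ is unchanged by $\tilde{e}_j$ when $c_{i,j}=0$. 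Either patch is immediate, so there is no real gap; the paper's route via Corollary \ref{folding of epsilon ast} simply sidesteps this reordering issue altogether.
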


\begin{proof}
We take $b \in \breve{\mathcal{B}}_w(\infty)$, and write $\Phi_{\bf i} (b)$ as $(a_1, \ldots, a_r)$. We will show that \[\Phi_{\Theta({\bf i})}(P_\infty ^{-1}(b)) = (\underbrace{a_1, \ldots, a_1}_{m_{i_1}}, \ldots, \underbrace{a_r, \ldots, a_r}_{m_{i_r}}).\] It follows by assumption (O) and Proposition \ref{folding of epsilon} that \[\varepsilon_{i_{1, k}}(\tilde{e}_{i_{1, k-1}} ^{a_1} \cdots \tilde{e}_{i_{1, 1}} ^{a_1} b) = \varepsilon_{i_{1, k}}(b) = a_1\] for all $1 \le k \le m_{i_1}$ (see also the proof of Proposition \ref{folding of epsilon}). Therefore, the following equality holds: \[\Phi_{\Theta({\bf i})}(P_\infty ^{-1}(b)) = (\underbrace{a_1, \ldots, a_1}_{m_{i_1}}, \Phi_{\Theta({\bf i}_{\ge 2})}(P_\infty ^{-1}(b'))),\] where ${\bf i}_{\ge 2} \coloneqq (i_2, \ldots, i_r)$ and $b' \coloneqq \tilde{e}_{i_{1, m_{i_1}}} ^{a_1} \cdots \tilde{e}_{i_{1, 1}} ^{a_1} b$. Moreover, by induction on $r$, we deduce that \[\Phi_{\Theta({\bf i}_{\ge 2})}(P_\infty ^{-1}(b')) = (\underbrace{a_2, \ldots, a_2}_{m_{i_2}}, \ldots, \underbrace{a_r, \ldots, a_r}_{m_{i_r}}).\] From these, we obtain the assertion for $\Phi_{\bf i}$. The assertion for $\Psi_{\bf i}$ is shown similarly by using Corollary \ref{folding of epsilon ast} instead of Proposition \ref{folding of epsilon}.
\end{proof}

If $b \in \mathcal{B}_{\Theta(w)}(\infty)$ satisfies $\Phi_{\Theta({\bf i})}(b) = \Upsilon_{\bf i}(a_1, \ldots, a_r)$ for some $(a_1, \ldots, a_r) \in \z_{\ge 0} ^r$, then it is easily seen that $b \in \mathcal{B}^0 _{\Theta(w)}(\infty)$. Hence we obtain the following.

\vspace{2mm}\begin{cor}\label{corollary slice upsilon}
Let ${\bf i} = (i_1, \ldots, i_r) \in \breve{I}^r$ be a reduced word for $w \in \breve{W}$. Then the following equalities hold$:$
\begin{align*}
&\Upsilon_{\bf i}(\Phi_{\bf i}(\breve{\mathcal{B}}_{w}(\infty))) = \{(a_{k, l})_{1 \le k \le r, 1\le l \le m_{i_k}} \in \Phi_{\Theta({\bf i})}(\mathcal{B}_{\Theta(w)}(\infty)) \mid a_{k, 1} = \cdots = a_{k, m_{i_k}},\ 1 \le k \le r\},\ {\it and}\\
&\Upsilon_{\bf i}(\Psi_{\bf i}(\breve{\mathcal{B}}_{w}(\infty))) = \{(a_{k, l})_{1 \le k \le r, 1\le l \le m_{i_k}} \in \Psi_{\Theta({\bf i})}(\mathcal{B}_{\Theta(w)}(\infty)) \mid a_{k, 1} = \cdots = a_{k, m_{i_k}},\ 1 \le k \le r\}.
\end{align*}
\end{cor}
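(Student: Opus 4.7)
The inclusion ``$\supseteq$'' is the substantive content; the inclusion ``$\subseteq$'' follows immediately from Corollary \ref{Key corollary 1}, which gives $\Upsilon_{\bf i}(\Phi_{\bf i}(\breve{\mathcal{B}}_w(\infty))) = \Phi_{\Theta({\bf i})}(\mathcal{B}^0_{\Theta(w)}(\infty)) \subset \Phi_{\Theta({\bf i})}(\mathcal{B}_{\Theta(w)}(\infty))$, combined with the observation that every tuple in the image of $\Upsilon_{\bf i}$ is by definition constant on each $\omega$-orbit block (and analogously for $\Psi_{\bf i}$). For ``$\supseteq$'', the plan is to take $(a_{k,l}) \in \Phi_{\Theta({\bf i})}(\mathcal{B}_{\Theta(w)}(\infty))$ satisfying $a_{k,1} = \cdots = a_{k,m_{i_k}} =: a_k$, to fix a preimage $b \in \mathcal{B}_{\Theta(w)}(\infty)$ with $\Phi_{\Theta({\bf i})}(b) = (a_{k,l})$, and to reduce everything to proving that $b \in \mathcal{B}^0_{\Theta(w)}(\infty)$. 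Once this is in hand, Corollary \ref{Key corollary 1} gives $\Upsilon_{\bf i}(\Phi_{\bf i}(P_\infty(b))) = \Phi_{\Theta({\bf i})}(b) = \Upsilon_{\bf i}(a_1, \ldots, a_r)$, hence $(a_1, \ldots, a_r) = \Phi_{\bf i}(P_\infty(b)) \in \Phi_{\bf i}(\breve{\mathcal{B}}_w(\infty))$, and therefore $(a_{k,l}) \in \Upsilon_{\bf i}(\Phi_{\bf i}(\breve{\mathcal{B}}_w(\infty)))$.

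To establish $\omega(b)=b$, the idea is to compare two different string parametrizations of $\omega(b)$. Pushing $\omega$ through the recursive definition of $\Phi_{\Theta({\bf i})}(b)$ and using the intertwining relation (\ref{omega negative part}), one gets $\Phi_{\omega(\Theta({\bf i}))}(\omega(b)) = \Phi_{\Theta({\bf i})}(b)$, where $\omega(\Theta({\bf i}))$ denotes the reduced word obtained by applying $\omega$ entry-wise. Concretely, $\omega(\Theta({\bf i}))$ is the cyclic shift of $\Theta({\bf i})$ within each $\omega$-orbit block. By assumption (O) the simple reflections inside such a block pairwise commute, so $\omega(\Theta({\bf i}))$ is again a reduced word for the same element $\Theta(w)$ and $\omega(b) \in \mathcal{B}_{\Theta(w)}(\infty)$. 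A swap lemma---resting on the identity $\varepsilon_j(\tilde{e}_{j'}^a x) = \varepsilon_j(x)$ for orthogonal pairs $\alpha_j \perp \alpha_{j'}$---then shows that exchanging two adjacent commuting generators in a reduced word swaps the corresponding two entries of the string parametrization. Iterating, any within-block permutation of $\Theta({\bf i})$ induces the matching within-block permutation of $\Phi_{\Theta({\bf i})}$. Applied to the tuple $\Upsilon_{\bf i}(a_1, \ldots, a_r)$, which is constant on each block, all such permutations act trivially, and hence $\Phi_{\Theta({\bf i})}(\omega(b)) = \Phi_{\Theta({\bf i})}(b)$. Injectivity of $\Phi_{\Theta({\bf i})}$ on $\mathcal{B}_{\Theta(w)}(\infty)$ then forces $\omega(b) = b$.

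The statement for $\Psi_{\bf i}$ will be handled by the same reasoning applied to $b^\ast$ and the reversed reduced word ${\bf i}^{\mathrm{op}}$, using Proposition \ref{compatible with Kashiwara's involution} to commute $\omega$ with $\ast$. The main technical hurdle is the swap lemma for string parametrizations under exchange of adjacent commuting generators; this is the non-trivial content of the ``easily seen'' remark preceding the corollary, but it is routine given the product structure of a crystal along two orthogonal simple roots. All remaining steps are straightforward bookkeeping with the definition of $\Upsilon_{\bf i}$, the compatibility result Corollary \ref{Key corollary 1}, and the intertwining relation (\ref{omega negative part}).
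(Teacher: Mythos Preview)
Your proposal is correct and follows the same overall reduction as the paper: both directions come down to the claim that if $b \in \mathcal{B}_{\Theta(w)}(\infty)$ has $\Phi_{\Theta({\bf i})}(b)$ constant on each $\omega$-block, then $b \in \mathcal{B}^0_{\Theta(w)}(\infty)$, after which Corollary~\ref{Key corollary 1} finishes. Your argument for this key claim---compare the string parametrizations of $b$ and $\omega(b)$ via a swap lemma for adjacent commuting generators and then invoke injectivity of $\Phi_{\Theta({\bf i})}$---is valid but more elaborate than what the paper intends. The paper's ``easily seen'' is simply the reconstruction formula: since $\Phi_{\Theta({\bf i})}(b) = (a_{k,l})$ for $b \in \mathcal{B}_{\Theta(w)}(\infty)$ forces
\[
b = \tilde{f}_{i_{1,1}}^{\,a_{1,1}} \cdots \tilde{f}_{i_{r,m_{i_r}}}^{\,a_{r,m_{i_r}}} b_\infty,
\]
the block-constancy $a_{k,l} = a_k$ together with assumption~(O) gives $b = (\tilde{f}_{i_1}^\omega)^{a_1} \cdots (\tilde{f}_{i_r}^\omega)^{a_r} b_\infty$, which lies in $\mathcal{B}^0(\infty)$ by Proposition~\ref{orbit infinity}~(1). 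Your route has the minor virtue of not relying on this explicit inverse of $\Phi_{\Theta({\bf i})}$, and your swap lemma is genuinely the right tool if one ever needed to compare $\Phi$ along two different block-orderings; but for the present statement the paper's argument is a one-liner. The $\Psi_{\bf i}$ case is handled identically in both approaches, via $\ast$ and ${\bf i}^{\rm op}$.
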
\vspace{2mm}

Similarly, we obtain the following (see Proposition \ref{compatibility with lambda} (3), (4)).

\vspace{2mm}\begin{cor}\label{polytopes as slices}
Let ${\bf i} = (i_1, \ldots, i_r) \in \breve{I}^r$ be a reduced word for $w \in \breve{W}$, and $\lambda \in P_+ \cap (\mathfrak{t}^\ast)^0$. Then the following equalities hold$:$ 
\begin{align*}
&\Upsilon_{\bf i}(\Phi_{\bf i}(\breve{\mathcal{B}}_{w}(\breve{\lambda}))) = \{(a_{k, l})_{1 \le k \le r, 1\le l \le m_{i_k}} \in \Phi_{\Theta({\bf i})}(\mathcal{B}_{\Theta(w)}(\lambda)) \mid a_{k, 1} = \cdots = a_{k, m_{i_k}},\ 1 \le k \le r\},\ {\it and}\\
&\Upsilon_{\bf i}(\Psi_{\bf i}(\breve{\mathcal{B}}_{w}(\breve{\lambda}))) = \{(a_{k, l})_{1 \le k \le r, 1\le l \le m_{i_k}} \in \Psi_{\Theta({\bf i})}(\mathcal{B}_{\Theta(w)}(\lambda)) \mid a_{k, 1} = \cdots = a_{k, m_{i_k}},\ 1 \le k \le r\},
\end{align*}
where $\breve{\lambda} \coloneqq (P_\omega ^\ast)^{-1}(\lambda)$.
\end{cor}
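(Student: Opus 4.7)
The plan is to reduce the assertion to Corollary \ref{corollary slice upsilon} (the analogous statement for $\mathcal{B}(\infty)$-type Demazure crystals) by transporting everything through the projections $\pi_\lambda$ and $\pi_{\breve\lambda}$. Recall from Proposition \ref{Demazure crystal} that $\pi_\lambda$ restricts to a bijection $\widetilde{\mathcal{B}}_{\Theta(w)}(\lambda) \xrightarrow{\sim} \mathcal{B}_{\Theta(w)}(\lambda)$, and likewise $\pi_{\breve\lambda}$ gives $\widetilde{\breve{\mathcal{B}}}_w(\breve\lambda) \xrightarrow{\sim} \breve{\mathcal{B}}_w(\breve\lambda)$; by Remark \ref{parametrization for modules}, the maps $\Phi_{\bf i}$ and $\Psi_{\bf i}$ on the Demazure crystals of modules are, by definition, the parametrizations of the corresponding preimages in the Demazure crystals of $U_q(\mathfrak{u}^-)$-type. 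Thus both sides of the claimed equality can be rewritten in terms of objects living in $\breve{\mathcal{B}}_w(\infty)$ and $\mathcal{B}_{\Theta(w)}(\infty)$, bridged by the commutative square of Proposition \ref{compatibility with lambda} (3).

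For the inclusion ``$\subseteq$'', take $\breve{b} \in \breve{\mathcal{B}}_w(\breve\lambda)$ and let $\tilde{\breve{b}} \in \widetilde{\breve{\mathcal{B}}}_w(\breve\lambda) \subset \breve{\mathcal{B}}_w(\infty)$ be its unique preimage under $\pi_{\breve\lambda}$. Set $\tilde{b} \coloneqq P_\infty^{-1}(\tilde{\breve{b}}) \in \mathcal{B}_{\Theta(w)}^0(\infty)$ via Proposition \ref{orbit infinity} (3). The commutative diagram in Proposition \ref{compatibility with lambda} (3) gives $P_\lambda(\pi_\lambda(\tilde{b})) = \pi_{\breve\lambda}(\tilde{\breve{b}}) = \breve{b} \neq 0$, hence $\pi_\lambda(\tilde{b}) \neq 0$; in particular $\tilde{b} \in \widetilde{\mathcal{B}}_{\Theta(w)}(\lambda)$ and $b \coloneqq \pi_\lambda(\tilde{b}) \in \mathcal{B}_{\Theta(w)}(\lambda)$. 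Combining Remark \ref{parametrization for modules} with Corollary \ref{Key corollary 1} yields $\Phi_{\Theta({\bf i})}(b) = \Phi_{\Theta({\bf i})}(\tilde{b}) = \Upsilon_{\bf i}(\Phi_{\bf i}(\tilde{\breve{b}})) = \Upsilon_{\bf i}(\Phi_{\bf i}(\breve{b}))$, which exhibits $\Upsilon_{\bf i}(\Phi_{\bf i}(\breve{b}))$ as an element of $\Phi_{\Theta({\bf i})}(\mathcal{B}_{\Theta(w)}(\lambda))$ satisfying the slice condition.

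For the reverse inclusion ``$\supseteq$'', take $b \in \mathcal{B}_{\Theta(w)}(\lambda)$ with $\Phi_{\Theta({\bf i})}(b) = \Upsilon_{\bf i}(a_1, \ldots, a_r)$, and let $\tilde{b} \in \widetilde{\mathcal{B}}_{\Theta(w)}(\lambda) \subset \mathcal{B}_{\Theta(w)}(\infty)$ be its preimage under $\pi_\lambda$, so that $\Phi_{\Theta({\bf i})}(\tilde{b}) = \Phi_{\Theta({\bf i})}(b)$. By Corollary \ref{corollary slice upsilon} the slice condition forces $\tilde{b} \in \mathcal{B}^0_{\Theta(w)}(\infty)$, hence $\tilde{\breve{b}} \coloneqq P_\infty(\tilde{b}) \in \breve{\mathcal{B}}_w(\infty)$. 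The same commutative diagram gives $\pi_{\breve\lambda}(\tilde{\breve{b}}) = P_\lambda(b) \neq 0$, so $\tilde{\breve{b}} \in \widetilde{\breve{\mathcal{B}}}_w(\breve\lambda)$, and $\breve{b} \coloneqq \pi_{\breve\lambda}(\tilde{\breve{b}}) \in \breve{\mathcal{B}}_w(\breve\lambda)$ satisfies $\Upsilon_{\bf i}(\Phi_{\bf i}(\breve{b})) = \Phi_{\Theta({\bf i})}(b)$ by the same computation as above. The statement for $\Psi_{\bf i}$ is proved verbatim, invoking Proposition \ref{compatible with Kashiwara's involution} and Corollary \ref{folding of epsilon ast} (which underlie the $\Psi_{\bf i}$-part of Corollary \ref{Key corollary 1}) in place of Proposition \ref{folding of epsilon}. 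I do not expect a substantive obstacle; the only step requiring some care is the non-vanishing bookkeeping needed to stay inside the Demazure subcrystals when lifting through $P_\infty$ and $\pi_\lambda$, and this is handled uniformly by the commutative square in Proposition \ref{compatibility with lambda} (3).
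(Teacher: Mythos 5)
Your proof is correct and follows essentially the same route the paper intends: the paper deduces this corollary ``similarly'' to Corollary \ref{corollary slice upsilon} by invoking Proposition \ref{compatibility with lambda} (3), (4), which is exactly the transport through $\pi_\lambda$, $\pi_{\breve{\lambda}}$, $P_\infty$, $P_\lambda$ (together with Remark \ref{parametrization for modules} and Corollary \ref{Key corollary 1}) that you carry out in detail.
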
\vspace{2mm}

By the definitions of Littelmann's string polytopes and Nakashima-Zelevinsky's polytopes, we obtain the following as an immediate consequence of Corollary \ref{polytopes as slices}.

\vspace{2mm}\begin{cor}\label{c:folding of polytopes(crystal)}
Let ${\bf i} = (i_1, \ldots, i_r) \in \breve{I}^r$ be a reduced word for $w \in \breve{W}$, and $\lambda \in P_+ \cap (\mathfrak{t}^\ast)^0$. Then the following equalities hold$:$
\begin{align*}
&\Upsilon_{\bf i}(\Delta_{\bf i} ^{(\breve{\lambda}, w)}) = \{(a_{k, l})_{1 \le k \le r, 1\le l \le m_{i_k}} \in \Delta_{\Theta({\bf i})} ^{(\lambda, \Theta(w))} \mid a_{k, 1} = \cdots = a_{k, m_{i_k}},\ 1 \le k \le r\},\ {\it and}\\
&\Upsilon_{\bf i}(\widetilde{\Delta}_{\bf i} ^{(\breve{\lambda}, w)}) = \{(a_{k, l})_{1 \le k \le r, 1\le l \le m_{i_k}} \in \widetilde{\Delta}_{\Theta({\bf i})} ^{(\lambda, \Theta(w))} \mid a_{k, 1} = \cdots = a_{k, m_{i_k}},\ 1 \le k \le r\},
\end{align*}
where $\breve{\lambda} \coloneqq (P_\omega ^\ast)^{-1}(\lambda)$.
\end{cor}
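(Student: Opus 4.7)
The plan is to derive the corollary directly from Corollary \ref{polytopes as slices} applied to every positive integer dilation of $\lambda$, and then to pass from the resulting lattice-point identity to the level of closed cones and polytopes.

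I first observe that every positive integer multiple $k\lambda$ also lies in $P_+ \cap (\mathfrak{t}^\ast)^0$, with $(P_\omega^\ast)^{-1}(k\lambda) = k\breve{\lambda}$, so Corollary \ref{polytopes as slices} applies with $\lambda$ replaced by $k\lambda$. This yields, for each $k \in \mathbb{Z}_{>0}$,
\[\Upsilon_{\bf i}(\Phi_{\bf i}(\widetilde{\mathcal{B}}_w(k\breve{\lambda}))) = \{(a_{k', l}) \in \Phi_{\Theta({\bf i})}(\widetilde{\mathcal{B}}_{\Theta(w)}(k\lambda)) \mid a_{k', 1} = \cdots = a_{k', m_{i_{k'}}},\ 1 \le k' \le r\},\]
after transferring from $\mathcal{B}_w(k\breve{\lambda})$ (resp.\ $\mathcal{B}_{\Theta(w)}(k\lambda)$) to $\widetilde{\mathcal{B}}_w(k\breve{\lambda})$ (resp.\ $\widetilde{\mathcal{B}}_{\Theta(w)}(k\lambda)$) via the bijections $\pi_{k\breve{\lambda}}$ and $\pi_{k\lambda}$ of Proposition \ref{Demazure crystal} (cf.\ Remark \ref{parametrization for modules}). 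The same identity holds with $\Psi$ in place of $\Phi$.

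Next, I take the union of these identities over $k \in \mathbb{Z}_{>0}$ while recording the dilation parameter as an additional first coordinate. By Definition \ref{d:representation-theoretic polytopes}, this gives
\[(\mathrm{id}_{\mathbb{R}} \times \Upsilon_{\bf i})(\mathcal{S}_{\bf i}^{(\breve{\lambda}, w)}) = \{(k, (a_{k', l})) \in \mathcal{S}_{\Theta({\bf i})}^{(\lambda, \Theta(w))} \mid a_{k', 1} = \cdots = a_{k', m_{i_{k'}}},\ 1 \le k' \le r\},\]
together with the analogous identity for $\widetilde{\mathcal{S}}$. To lift this to the level of closed cones, I invoke Proposition \ref{string lattice points} (1): both $\mathcal{C}_{\bf i}^{(\breve{\lambda}, w)}$ and $\mathcal{C}_{\Theta({\bf i})}^{(\lambda, \Theta(w))}$ are rational convex polyhedral cones coinciding with the real closed cones generated by their lattice points $\mathcal{S}_{\bf i}^{(\breve{\lambda}, w)}$ and $\mathcal{S}_{\Theta({\bf i})}^{(\lambda, \Theta(w))}$. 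Since the linear map $\mathrm{id}_{\mathbb{R}} \times \Upsilon_{\bf i}$ and the intersection with the rational linear subspace defined by $a_{k', 1} = \cdots = a_{k', m_{i_{k'}}}$ ($1 \le k' \le r$) both preserve rational polyhedrality and commute with passing to the real closed cone generated by the lattice points, the lattice-point identity lifts to the same identity with $\mathcal{C}$ in place of $\mathcal{S}$.

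Finally, I slice at the hyperplane $\{k = 1\}$ and project away the first coordinate; this gives the claimed polytope identity for $\Delta$, and the parallel argument with $\Psi$ in place of $\Phi$ delivers the identity for $\widetilde{\Delta}$. The only step that is not formally immediate is the lift from lattice points to closed cones in the previous paragraph; this is routine but rests essentially on the rationality of both cones, supplied by Proposition \ref{string lattice points} (1), so that the cones are determined by their lattice points and behave well under linear maps and intersections with rational linear subspaces.
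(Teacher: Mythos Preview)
Your argument is correct and is precisely a detailed unpacking of what the paper records as ``an immediate consequence of Corollary \ref{polytopes as slices}'' together with the definitions in Definition \ref{d:representation-theoretic polytopes}. The only point worth noting is that your passage from the lattice-point identity to the cone identity does use, implicitly, that the polytope $\Delta_{\Theta({\bf i})}^{(\lambda, \Theta(w))}$ is bounded (so that $\mathcal{C}_{\Theta({\bf i})}^{(\lambda, \Theta(w))}$ meets $\{0\}\times\mathbb{R}^N$ only at the origin and hence the intersection with the rational subspace is again the closed cone generated by its lattice points with positive first coordinate); this is part of Proposition \ref{string lattice points} and is harmless, but you may want to make it explicit.
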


\vspace{2mm}\begin{rem}\normalfont
Corollary \ref{c:folding of polytopes(crystal)} is naturally extended to string polytopes for generalized Demazure modules, defined in \cite{Fuj}.
\end{rem}\vspace{2mm}

\section{Fixed point Lie subalgebras}

In this section, we prove our main result. Let us consider the fixed point Lie subalgebra by $\omega$ \[\mathfrak{g}^\omega \coloneqq \{x \in \mathfrak{g} \mid \omega(x) = x\}.\] Define $E_i ^\prime, F_i ^\prime, h_i ^\prime \in \mathfrak{g}^\omega$ and $\alpha^\prime _i \in (\mathfrak{t}^\ast)^0$ for $i \in \breve{I}$ by \[E_i ^\prime \coloneqq \sum_{0 \le k < m_i} E_{\omega^k(i)},\ F_i ^\prime \coloneqq \sum_{0 \le k < m_i} F_{\omega^k(i)},\ h_i ^\prime \coloneqq \sum_{0 \le k < m_i} h_{\omega^k(i)}\ {\rm and}\ \alpha_i ^\prime \coloneqq \frac{1}{m_i} \sum_{0 \le k < m_i} \alpha_{\omega^k(i)}.\] We set $c^\prime _{i, j} \coloneqq \langle \alpha_j ^\prime, h_i ^\prime \rangle$ for $i, j \in \breve{I}$. Then, it is easily checked that $\breve{c}_{i, j} = c^\prime _{j, i}$ for all $i, j \in \breve{I}$; namely, the matrix $C^\prime \coloneqq (c^\prime _{i, j})_{i, j \in \breve{I}}$ is the transpose of $\breve{C}$. In particular, the matrix $C^\prime$ is an indecomposable Cartan matrix of finite type.

\vspace{2mm}\begin{prop}[{see \cite[Proposition 8.3]{Kac}}]
The fixed point Lie subalgebra ${\mathfrak g}^\omega$ is the simple Lie algebra with Cartan matrix $C^\prime$ and Chevalley generators $E_i ^\prime, F_i ^\prime, h_i ^\prime$, $i \in \breve{I};$ in particular, the orbit Lie algebra $\breve{\mathfrak g}$ associated to $\omega$ is the (Langlands) dual Lie algebra of ${\mathfrak g}^\omega$. 
\end{prop}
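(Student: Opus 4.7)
The plan is to verify directly that the elements $\{E_i', F_i', h_i'\}_{i \in \breve{I}}$ lie in $\mathfrak{g}^\omega$, satisfy the Chevalley--Serre relations associated to the Cartan matrix $C'$, and generate the whole of $\mathfrak{g}^\omega$. That $E_i', F_i', h_i' \in \mathfrak{g}^\omega$ is immediate from the construction, since each is an $\omega$-invariant orbit sum of Chevalley generators of $\mathfrak{g}$.

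Next I would check the Chevalley relations. Commutativity $[h_i', h_j']=0$ is clear. For $[E_i', F_j']$ with $i,j \in \breve{I}$, expanding yields
\[
[E_i', F_j'] = \sum_{0\le k < m_i,\ 0\le l < m_j} [E_{\omega^k(i)}, F_{\omega^l(j)}] = \sum_{k,l} \delta_{\omega^k(i),\omega^l(j)}\, h_{\omega^k(i)}.
\]
If $i \neq j$ the Kronecker delta vanishes since the $\omega$-orbits are disjoint, and if $i = j$ assumption (O) forces $k = l$ (two distinct elements of the same orbit are not joined, hence distinct as indices), giving $[E_i',F_i'] = h_i'$. For $[h_i', E_j']$ one computes
\[
[h_i', E_j'] = \sum_{k,l} c^{\ast}_{\omega^k(i),\omega^l(j)} E_{\omega^l(j)},
\]
where $c^\ast$ denotes the appropriate Cartan pairing. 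Using the $\omega$-invariance $c_{\omega(a),\omega(b)}=c_{a,b}$ and the fact that as $k$ ranges over $0,\ldots,m_i-1$ the index $\omega^{k-l}(i)$ traverses the orbit of $i$ exactly once, the inner sum collapses to $c'_{ij}$ (by the very definition of $c'_{ij} = \langle \alpha_j', h_i'\rangle$), so $[h_i', E_j'] = c'_{ij} E_j'$; analogously $[h_i', F_j'] = -c'_{ij} F_j'$.

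Then I would verify the Serre relations $(\mathrm{ad}\, E_i')^{1-c'_{ij}} E_j' = 0$ for $i \neq j$. Expand $E_i' = \sum_k E_{\omega^k(i)}$; by assumption (O) the summands $E_{\omega^k(i)}$ mutually commute, so $(\mathrm{ad}\, E_i')^N$ distributes as a multinomial in commuting derivations, and the Serre relations of $\mathfrak{g}$ applied orbit-element by orbit-element force the expression to vanish at the stated exponent. This is essentially a bookkeeping check, but it is the main technical obstacle: one must show that the combinatorial multiplicities coming from the multinomial expansion align with the exponent $1 - c'_{ij}$. Since the paper restricts to simply-laced $\mathfrak{g}$ with assumption (O), this verification reduces to the four cases in Table \ref{table1}, and the computation in each case is short.

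Finally, to conclude that the subalgebra $\mathfrak{g}'$ generated by $\{E_i', F_i', h_i' \mid i \in \breve{I}\}$ is all of $\mathfrak{g}^\omega$ (and not merely a proper subalgebra), I would compare dimensions: on the one hand, $\mathfrak{g}^\omega$ has a known dimension computed from the $\omega$-eigenspace decomposition of $\mathfrak{g}$ (each root space in the orbit of length $m_i$ contributes one fixed vector); on the other hand, the finite-type simple Lie algebra with Cartan matrix $C'$, which by the Chevalley--Serre presentation is a quotient of $\mathfrak{g}'$, has the same dimension case by case from Table \ref{table1}. The two coincide, so $\mathfrak{g}' = \mathfrak{g}^\omega$ and no further relations are needed. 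The Langlands duality assertion follows at once, since $C' = \breve{C}^{\,t}$ by inspection of the definitions of $c'_{ij}$ and $\breve{c}_{ij}$. All of this is of course exactly the content of \cite[Proposition 8.3]{Kac}, to which the proof ultimately appeals.
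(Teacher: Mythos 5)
The paper gives no argument of its own for this proposition: it is quoted with the citation \cite[Proposition 8.3]{Kac}, and your direct verification (orbit sums satisfy the Chevalley--Serre relations for $C'$, then identify the generated subalgebra with $\mathfrak{g}^\omega$) is essentially the content of that citation, so in outline you are following the intended route. One simplification: the Serre relations do not need the case-by-case reduction to Table \ref{table1} that you anticipate. Since assumption (O) makes the summands of $E_i'$ commute, expand $({\rm ad}\,E_i')^{1-c'_{i,j}}E_{\omega^l(j)}$ multinomially; a monomial $\prod_k({\rm ad}\,E_{\omega^k(i)})^{a_k}E_{\omega^l(j)}$ can survive only if $a_k\le -c_{\omega^k(i),\omega^l(j)}$ for every $k$ (otherwise reorder the commuting factors so the offending one hits $E_{\omega^l(j)}$ first and apply the Serre relation of $\mathfrak{g}$), and summing these bounds gives $\sum_k a_k\le -c'_{i,j}<1-c'_{i,j}$, a contradiction; so every monomial vanishes, uniformly in all cases. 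Also note a small directional slip: the simple Lie algebra $\mathfrak{g}(C')$ surjects onto the subalgebra generated by $E_i',F_i',h_i'$ (the generators satisfy its defining relations), not the other way around; simplicity of $\mathfrak{g}(C')$ then makes this map an isomorphism onto that subalgebra, and the only remaining issue is whether the subalgebra is all of $\mathfrak{g}^\omega$.

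That remaining issue is the one genuine soft spot in your plan. Your dimension count rests on the parenthetical claim that each $\omega$-orbit of root spaces contributes exactly one fixed vector, i.e.\ that for a root $\alpha$ whose $\omega^\ast$-orbit has size $d$, the automorphism $\omega^{d}$ acts as $+1$ on $\mathfrak{g}_\alpha$. This is precisely where assumption (O) does real work and it is not automatic: for the order-two automorphism of $A_2$ (which violates (O)) one has $\omega([E_1,E_2])=[E_2,E_1]=-[E_1,E_2]$, so the $\omega$-fixed root $\alpha_1+\alpha_2$ contributes nothing and the naive count overstates $\dim\mathfrak{g}^\omega$; the same sign phenomenon is why the $A_{2n}$ folding is excluded from this paper. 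So you must either prove the ``$+1$ on each orbit of root spaces'' statement under (O) (which can be done, e.g.\ case by case from Table \ref{table1} or by an explicit choice of root vectors), or bypass the dimension comparison by showing directly that $\{E_i',F_i',h_i'\mid i\in\breve{I}\}$ generates $\mathfrak{g}^\omega$. With that point supplied, the rest of your argument, including $C'=\breve{C}^{\,t}$ and hence the Langlands-duality assertion, is correct.
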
\vspace{2mm}

Recall that $G$ is the connected, simply-connected simple algebraic group with ${\rm Lie}(G) = \mathfrak{g}$. The Lie algebra automorphism $\omega \colon \mathfrak{g} \xrightarrow{\sim} \mathfrak{g}$ induces an algebraic group automorphism $\omega \colon G \xrightarrow{\sim} G$ such that $\omega(\exp(x)) = \exp(\omega(x))$ for all $x \in \mathfrak{g}$. It is known that the fixed point subgroup \[G^\omega \coloneqq \{g \in G \mid \omega(g) = g\}\] is a connected simple algebraic group with ${\rm Lie}(G^\omega) = \mathfrak{g}^\omega$; note that $G^\omega$ is a Zariski closed subgroup of $G$. In addition, we see by Table 1 in Section 4 and a case-by-case argument that $G^\omega$ is simply-connected. Since the fixed point subgroup $(U^-)^\omega \coloneqq U^- \cap G^\omega$ is a Zariski closed subgroup of $U^-$, the coordinate ring $\mathbb{C}[(U^-)^\omega]$ is a quotient of $\mathbb{C}[U^-]$; denote by $\pi^\omega \colon \mathbb{C}[U^-] \twoheadrightarrow \mathbb{C}[(U^-)^\omega]$ the quotient map. We set $B^\omega \coloneqq B \cap G^\omega$, and consider the full flag variety $G^\omega/B^\omega$. Let $\iota^\omega \colon G^\omega/B^\omega \hookrightarrow G/B$ denote the natural injective map. Since $\omega(B) = B$, the automorphism $\omega \colon G \xrightarrow{\sim} G$ induces a variety automorphism $\omega \colon G/B \xrightarrow{\sim} G/B$, and the image of $\iota^\omega$ is identical to the fixed point subvariety $(G/B)^\omega$. In addition, the map $\iota^\omega$ induces a $\c$-linear isomorphism from the tangent space of $G^\omega/B^\omega$ at $e \bmod B^\omega$ to that of $(G/B)^\omega$ at $e \bmod B$, where $e \in G^\omega$ ($\subset G$) is the identity element; note that both of these tangent spaces are identified with the Lie subalgebra of $\mathfrak{g}^\omega$ generated by $\{F_i ^\prime \mid i \in \breve{I}\}$. Therefore, the map $\iota^\omega \colon G^\omega/B^\omega \rightarrow (G/B)^\omega$ is an isomorphism of varieties (see, for instance, \cite[Sect.\ 5]{Spr}). Here we note that since $\mathfrak{g}^\omega$ is the (Langlands) dual Lie algebra of $\breve{\mathfrak{g}}$, the Weyl group $\breve{W}$ of $\breve{\mathfrak{g}}$ is identified with that of $\mathfrak{g}^\omega$. We consider the Schubert variety $X(w) \subset G^\omega/B^\omega \simeq (G/B)^\omega$ corresponding to $w \in \breve{W}$; this is identified with a Zariski closed subvariety of $X(\Theta(w))$. Let us regard $(U^-)^\omega$ as an affine open subvariety of $G^\omega/B^\omega$, and take the intersection $(U^-)^\omega \cap X(w)$ in $G^\omega/B^\omega$ for $w \in \breve{W}$; this intersection is identified with a Zariski closed subvariety of $U^- \cap X(\Theta(w))$. Let $\pi^\omega _w \colon \mathbb{C}[U^- \cap X(\Theta(w))] \twoheadrightarrow \mathbb{C}[(U^-)^\omega \cap X(w)]$ be the restriction map for $w \in \breve{W}$. We take a reduced word ${\bf i} = (i_1, \ldots, i_r) \in \breve{I}^r$ for $w \in \breve{W}$, and regard the coordinate ring $\mathbb{C}[(U^-)^\omega \cap X(w)]$ as a $\mathbb{C}$-subalgebra of the polynomial ring $\mathbb{C}[t_1, \ldots, t_r]$ by the following birational morphism: 
\[\c^r \rightarrow (U^-)^\omega \cap X(w),\ (t_1, \ldots, t_r) \mapsto \exp(t_1 F_{i_1} ^\prime) \cdots \exp(t_r F_{i_r} ^\prime).\] 
Since $\Theta({\bf i}) = (i_{1, 1}, \ldots, i_{1, m_{i_1}}, \ldots, i_{r, 1}, \ldots, i_{r, m_{i_r}})$ is a reduced word for $\Theta(w) \in W$, the coordinate ring $\mathbb{C}[U^- \cap X(\Theta(w))]$ is regarded as a $\mathbb{C}$-subalgebra of the polynomial ring $\mathbb{C}[t_{k, l} \mid 1 \le k \le r,\ 1 \le l \le m_{i_k}]$ by the following birational morphism: 
\begin{align*}
\c^{m_{i_1} + \cdots + m_{i_r}} \rightarrow U^- \cap X(\Theta(w)),\ (t_{1, 1}, \ldots, t_{r, m_{i_r}}) \mapsto \exp(t_{1, 1} F_{i_{1, 1}}) \cdots \exp(t_{r, m_{i_r}} F_{i_{r, m_{i_r}}}).
\end{align*}
Also, under the inclusion map $(U^-)^\omega \cap X(w) \hookrightarrow U^- \cap X(\Theta(w))$, we have \[\exp(tF_{i_k} ^\prime) \mapsto \exp(tF_{i_{k, 1}}) \cdots \exp(tF_{i_{k, m_{i_k}}})\] for $t \in \c$ and $1 \le k \le r$. Hence we obtain the following.

\vspace{2mm}\begin{lem}\label{polynomial folding}
Define a surjective map $\pi^\omega _{\bf i} \colon \mathbb{C}[t_{k, l} \mid 1 \le k \le r,\ 1 \le l \le m_{i_k}] \twoheadrightarrow \mathbb{C}[t_1, \ldots, t_r]$ by $\pi^\omega _{\bf i}(t_{k, l}) \coloneqq t_k$ for $1 \le k \le r$ and $1 \le l \le m_{i_k}$. Then the following diagram is commutative$:$
\begin{align*}
\xymatrix{\mathbb{C}[U^- \cap X(\Theta(w))] \ar@{^{(}->}[r] \ar[d]^-{\pi^\omega _w} & \mathbb{C}[t_{k, l} \mid 1 \le k \le r,\ 1 \le l \le m_{i_k}] \ar[d]^-{\pi^\omega _{\bf i}}\\
\mathbb{C}[(U^-)^\omega \cap X(w)] \ar@{^{(}->}[r] & \mathbb{C}[t_1, \ldots, t_r].}
\end{align*}
\end{lem}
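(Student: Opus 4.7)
The plan is to verify the diagram by working out both compositions on the level of the ambient polynomial rings, and then restricting. The key input is the identity
\[\exp(tF_{i_k}^\prime) = \exp(tF_{i_{k,1}}) \cdots \exp(tF_{i_{k,m_{i_k}}}),\qquad 1 \le k \le r,\ t \in \mathbb{C},\]
already observed just before the statement; it is valid because $F_{i_{k,1}}, \ldots, F_{i_{k,m_{i_k}}}$ lie in a single $\omega$-orbit, and by assumption (O) they pairwise commute in $\mathfrak{u}^-$, so that $\exp(t\sum_{l} F_{i_{k,l}})$ factors as the product on the right in any order.

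First I would use this identity to make the closed embedding $\iota \colon (U^-)^\omega \cap X(w) \hookrightarrow U^- \cap X(\Theta(w))$ explicit in the chosen birational coordinates. Namely, if $(t_1, \ldots, t_r) \in \mathbb{C}^r$ parametrizes a general point of $(U^-)^\omega \cap X(w)$ via $(t_1,\ldots,t_r) \mapsto \exp(t_1 F_{i_1}^\prime)\cdots \exp(t_r F_{i_r}^\prime)$, then substituting the above identity expresses the same point as
\[\exp(t_1 F_{i_{1,1}})\cdots\exp(t_1 F_{i_{1,m_{i_1}}}) \cdots \exp(t_r F_{i_{r,1}})\cdots\exp(t_r F_{i_{r,m_{i_r}}}),\]
which is exactly the image under the birational parametrization of $U^- \cap X(\Theta(w))$ by $\mathbb{C}^{m_{i_1}+\cdots+m_{i_r}}$ of the point with $t_{k,l} = t_k$ for every $k,l$. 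Hence $\iota$ is given in these coordinates by the linear map $\mathbb{C}^r \hookrightarrow \mathbb{C}^{m_{i_1}+\cdots+m_{i_r}}$, $(t_1,\ldots,t_r)\mapsto (t_{k,l})$ with $t_{k,l}=t_k$.

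Next I would dualize. On coordinate rings, the morphism $\iota$ induces the pullback $\iota^\ast$, and a polynomial $f(t_{k,l})$ in $\mathbb{C}[t_{k,l}\mid 1\le k\le r,\ 1\le l\le m_{i_k}]$ pulls back to $f(t_k)$ in $\mathbb{C}[t_1,\ldots,t_r]$; that is, $\iota^\ast$ coincides with the map $\pi^\omega_{\bf i}$ defined in the lemma. On the other hand, under the identification of $\mathbb{C}[U^-\cap X(\Theta(w))]$ (resp.\ $\mathbb{C}[(U^-)^\omega\cap X(w)]$) with a subalgebra of the corresponding polynomial ring, the restriction map $\pi^\omega_w$ is by definition the pullback $\iota^\ast$ restricted to $\mathbb{C}[U^-\cap X(\Theta(w))]$. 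Therefore, the two paths around the diagram,
\[\mathbb{C}[U^- \cap X(\Theta(w))] \hookrightarrow \mathbb{C}[t_{k,l}] \xrightarrow{\pi^\omega_{\bf i}} \mathbb{C}[t_1,\ldots,t_r] \qquad \text{and} \qquad \mathbb{C}[U^- \cap X(\Theta(w))] \xrightarrow{\pi^\omega_w} \mathbb{C}[(U^-)^\omega \cap X(w)] \hookrightarrow \mathbb{C}[t_1,\ldots,t_r],\]
both realize $\iota^\ast$ on $\mathbb{C}[U^- \cap X(\Theta(w))]$, proving commutativity.

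There is essentially no analytic obstacle here: the only nontrivial point is the factorization of $\exp(tF_{i_k}^\prime)$, which is a direct consequence of assumption (O). The verification is then a bookkeeping exercise in identifying the birational parametrizations and transporting the closed embedding to coordinates; surjectivity of $\pi^\omega_{\bf i}$ is built into its definition, and well-definedness of $\pi^\omega_w$ is guaranteed by $(U^-)^\omega\cap X(w)$ being a Zariski closed subvariety of $U^-\cap X(\Theta(w))$.
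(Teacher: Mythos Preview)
Your argument is correct and is essentially the paper's approach: the paper simply states ``Hence we obtain the following'' after recording the identity $\exp(tF_{i_k}^\prime) \mapsto \exp(tF_{i_{k,1}}) \cdots \exp(tF_{i_{k,m_{i_k}}})$ under the inclusion $(U^-)^\omega \cap X(w) \hookrightarrow U^- \cap X(\Theta(w))$, and you have spelled out exactly the dualization that turns this into the commutativity of the diagram.
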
\vspace{2mm}

\begin{defi}\normalfont
Define a $\mathbb{C}$-algebra homomorphism $\Delta \colon U(\mathfrak{u}^-) \rightarrow U(\mathfrak{u}^-) \otimes U(\mathfrak{u}^-)$ by $\Delta(x) = x \otimes 1 + 1 \otimes x$ for $x \in \mathfrak{u}^-$.
\end{defi}\vspace{2mm}

Let us consider a perfect basis ${\bf B}^{\rm low} = \{\Xi^{\rm low} (b) \mid b \in \mathcal{B}(\infty)\} \subset U(\mathfrak{u}^-)$ that satisfies the following positivity conditions:
\begin{enumerate}
\item[${\rm (P)}_1$] the element $F_i \cdot \Xi^{\rm low} (b)$ belongs to $\sum_{b^\prime \in \mathcal{B}(\infty)} \mathbb{R}_{\ge 0} \Xi^{\rm low} (b^\prime)$ for every $b \in \mathcal{B}(\infty)$ and $i \in I$;
\item[${\rm (P)}_2$] the element $\Delta(\Xi^{\rm low} (b))$ belongs to $\sum_{b^\prime, b'' \in \mathcal{B}(\infty)} \mathbb{R}_{\ge 0} \Xi^{\rm low} (b^\prime) \otimes \Xi^{\rm low} (b'')$ for every $b \in \mathcal{B}(\infty)$. 
\end{enumerate}

\vspace{2mm}\begin{rem}\normalfont
In the paper \cite{FO}, the author and Oya used a perfect basis that satisfies slightly weaker positivity conditions; in it, positivity conditions are imposed only on certain coefficients of $\Delta(\Xi^{\rm low} (b))$. 
\end{rem}

\vspace{2mm}\begin{ex}\normalfont
Recall that $\mathfrak{g}$ is of simply-laced type. In this case, Lusztig proved that the specialization of the lower global basis at $q=1$ satisfies positivity conditions ${\rm (P)}_1, {\rm (P)}_2$ by using the geometric construction of the lower global basis \cite[Theorem 11.5]{Lus_quivers}. 
\end{ex}\vspace{2mm}

\begin{lem}\label{l:positivity lemma}
Let ${\bf B}^{\rm low} = \{\Xi^{\rm low} (b) \mid b \in \mathcal{B}(\infty)\} \subset U(\mathfrak{u}^-)$ be a perfect basis satisfying ${\rm (P)}_1, {\rm (P)}_2$. 
\begin{enumerate}
\item[{\rm (1)}] The perfect basis ${\bf B}^{\rm low}$ satisfies condition {\rm (D)} in Section {\rm 3}.
\item[{\rm (2)}] The element $\Xi_{\Theta(w)} ^{\rm up}(b) \cdot \Xi_{\Theta(w)} ^{\rm up}(b^\prime)$ belongs to $\sum_{b'' \in \mathcal{B}_{\Theta(w)}(\infty)} \mathbb{R}_{\ge 0} \Xi_{\Theta(w)} ^{\rm up}(b'')$ for all $w \in \breve{W}$ and $b, b^\prime \in \mathcal{B}_{\Theta(w)}(\infty);$ in addition, the coefficient of $\Xi_{\Theta(w)} ^{\rm up}(b'')$ is not equal to $0$ if $\Phi_{\Theta({\bf i})}(b'') = \Phi_{\Theta({\bf i})}(b) + \Phi_{\Theta({\bf i})}(b^\prime)$ or if $\Psi_{\Theta({\bf i})}(b'') = \Psi_{\Theta({\bf i})}(b) + \Psi_{\Theta({\bf i})}(b^\prime)$.
\item[{\rm (3)}] The coefficient of $t_{1, 1} ^{a_{1, 1}} \cdots t_{r, m_{i_r}} ^{a_{r, m_{i_r}}}$ in $\Xi_{\Theta(w)} ^{\rm up}(b) \in \mathbb{C}[t_{k, l} \mid 1 \le k \le r,\ 1 \le l \le m_{i_k}]$ is a nonnegative real number for all $w \in \breve{W}$, $b \in \mathcal{B}_{\Theta(w)}(\infty)$, and $a_{1, 1}, \ldots, a_{r, m_{i_r}} \in \mathbb{Z}_{\ge 0}$.
\end{enumerate}
\end{lem}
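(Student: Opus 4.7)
I plan to prove the three parts in the order (3), (2), (1): part (3) provides positive leading coefficients used in the second half of part (2), while part (1) is the subtlest piece. For part (3), the birational parametrization $(t_{k,l}) \mapsto \exp(t_{1,1} F_{i_{1,1}}) \cdots \exp(t_{r,m_{i_r}} F_{i_{r,m_{i_r}}})$ lets me express the polynomial representing $\Xi^{\rm up}_{\Theta(w)}(b)$ as the pullback of $\Xi^{\rm up}(b) \in \c[U^-]$ along this path. Expanding each exponential in power series and using the canonical pairing between $\c[U^-] \cong U(\mathfrak{u}^-)^\ast_{\rm gr}$ and $U(\mathfrak{u}^-)$, the coefficient of $\prod t_{k,l}^{a_{k,l}}$ comes out as $(\prod a_{k,l}!)^{-1} \langle \Xi^{\rm up}(b), F_{i_{1,1}}^{a_{1,1}} \cdots F_{i_{r,m_{i_r}}}^{a_{r,m_{i_r}}} \rangle$. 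Starting from $1 = \Xi^{\rm low}(b_\infty)$ and iterating $(P)_1$, the product $F_{i_{1,1}}^{a_{1,1}} \cdots F_{i_{r,m_{i_r}}}^{a_{r,m_{i_r}}}$ is a nonnegative combination of the perfect basis, so the pairing returns a nonnegative number.

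For part (2), since $\c[U^-] \cong U(\mathfrak{u}^-)^\ast_{\rm gr}$ is the dual Hopf algebra, the product of $\Xi^{\rm up}$'s is dual to $\Delta$ applied to $\Xi^{\rm low}$'s: writing $\Delta(\Xi^{\rm low}(b'')) = \sum_{b,b'} \alpha^{b''}_{b,b'} \Xi^{\rm low}(b) \otimes \Xi^{\rm low}(b')$ with $\alpha^{b''}_{b,b'} \ge 0$ by $(P)_2$, duality gives $\Xi^{\rm up}(b) \cdot \Xi^{\rm up}(b') = \sum_{b''} \alpha^{b''}_{b,b'} \Xi^{\rm up}(b'')$; applying $\eta_{\Theta(w)}$ and using Proposition~\ref{vanishing}~(4) to discard $b'' \notin \mathcal{B}_{\Theta(w)}(\infty)$ yields the first half. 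For the nonvanishing assertion, Proposition~\ref{string polytopes}~(1) together with part~(3) gives that each $\Xi^{\rm up}_{\Theta(w)}(b'')$ has leading monomial $\prod t^{\Phi_{\Theta({\bf i})}(b'')}$ (with respect to the lex order $<$) with strictly positive coefficient. Multiplicativity of $v_{\Theta({\bf i})}$ forces the leading exponent of the product to be $\Phi_{\Theta({\bf i})}(b) + \Phi_{\Theta({\bf i})}(b')$; injectivity of $\Phi_{\Theta({\bf i})}$ on $\mathcal{B}_{\Theta(w)}(\infty)$ together with nonnegativity of all $\alpha^{b''}_{b,b'}$ (which precludes cancellation in the leading monomial) then forces the unique $b''_0$ with $\Phi_{\Theta({\bf i})}(b''_0) = \Phi_{\Theta({\bf i})}(b) + \Phi_{\Theta({\bf i})}(b')$ to satisfy $\alpha^{b''_0}_{b,b'} > 0$. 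The $\Psi$-case is handled identically via $\tilde{v}_{\Theta({\bf i})}$ and Proposition~\ref{polyhedral realizations}~(1).

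For part (1), by Proposition~\ref{p:compatibility} the set $\{\Xi^{\rm low}_\lambda(b) \mid b \in \mathcal{B}_w(\lambda)\}$ is linearly independent, and the Demazure character formula gives $\dim V_w(\lambda) = |\mathcal{B}_w(\lambda)|$, so it suffices to prove each $\Xi^{\rm low}_\lambda(b)$ lies in $V_w(\lambda)$. I would fix a reduced word $(i_1, \ldots, i_r)$ for $w$ and use the classical description $V_w(\lambda) = \sum_{k_1,\ldots,k_r \ge 0} \c F_{i_1}^{(k_1)} \cdots F_{i_r}^{(k_r)} v_\lambda$ together with $\mathcal{B}_w(\lambda) = \{\tilde{f}_{i_1}^{k_1} \cdots \tilde{f}_{i_r}^{k_r} b_\lambda\} \setminus \{0\}$. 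Iteratively applying property~(iii) of the perfect basis (after $\pi_\lambda$) to $v_\lambda = \Xi^{\rm low}_\lambda(b_\lambda)$ produces, for each multi-index, an expansion $F_{i_1}^{(k_1)} \cdots F_{i_r}^{(k_r)} v_\lambda = c \Xi^{\rm low}_\lambda(\tilde{f}_{i_1}^{k_1} \cdots \tilde{f}_{i_r}^{k_r} b_\lambda) + (\text{corrections})$. By Demazure stability of $\mathcal{B}_w(\lambda)$ along the $i_k$-strings combined with $(P)_1$, the corrections are nonnegative combinations of $\Xi^{\rm low}_\lambda(b')$ with $b' \in \mathcal{B}_w(\lambda)$ having strictly higher $\varepsilon_{i_k}$-statistic at the step they first appear. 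The resulting upper-triangular system on $\mathcal{B}_w(\lambda)$ can be inverted inductively along the word to express each $\Xi^{\rm low}_\lambda(b)$ as an element of $V_w(\lambda)$.

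The hardest step will be part (1): I must verify that the corrections produced by property~(iii) along the reduced word really stay inside $\mathcal{B}_w(\lambda)$, and that the system is triangular enough to invert. This hinges on the delicate interplay between Demazure stability and positivity $(P)_1$, and requires careful bookkeeping along the word. Parts (2) and (3) follow more mechanically from Hopf duality and the valuation formalism of Sections 2--3.
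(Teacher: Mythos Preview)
Your arguments for part (3) and for the first half of part (2) are correct and are essentially the approach of \cite{FO} to which the paper defers. For the second half of (2) your argument is also valid, though the appeal to (3) is not needed: since the $\Xi^{\rm up}_{\Theta(w)}(b'')$ have pairwise distinct valuations $-\Phi_{\Theta({\bf i})}(b'')$ (injectivity of the string parametrization), the valuation of any nonzero linear combination is the minimum over the support, and additivity of $v_{\Theta({\bf i})}$ then forces some $b''_0$ with $\Phi_{\Theta({\bf i})}(b''_0)=\Phi_{\Theta({\bf i})}(b)+\Phi_{\Theta({\bf i})}(b')$ to occur. This is the ``general properties of valuations'' the paper cites from \cite{Kav}.

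There are, however, two real problems. First, your ordering is circular. Your proof of (2) invokes Proposition~\ref{vanishing} (3),(4) to know that $\{\Xi^{\rm up}_{\Theta(w)}(b''):b''\in\mathcal{B}_{\Theta(w)}(\infty)\}$ is a basis of $\c[U^-\cap X(\Theta(w))]$ and that the remaining restrictions vanish; but that proposition is stated for perfect bases satisfying condition (D), which is exactly part (1). In the paper (and in \cite{FO}) part (1) is established first, and only then are (2) and (3) deduced.

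Second, your sketch for (1) has a genuine gap at the very point you flag. After applying property (iii) of Definition~\ref{definition of perfect bases} at step $j$, the correction terms are indexed by $b'$ of the same weight with strictly larger $\varepsilon_{i_j}$. Nothing in property (iii), in $({\rm P})_1$, or in the bare string property of Demazure crystals forces such $b'$ to lie in $\mathcal{B}_w(\lambda)$: an element with large $\varepsilon_{i_j}$ need not have the head of its $i_j$-string inside $\mathcal{B}_{s_{i_{j+1}}\cdots s_{i_r}}(\lambda)$. Since the whole content of (1) is precisely that these unwanted $b'$ do not contribute, the phrase ``by Demazure stability combined with $({\rm P})_1$'' restates the conclusion rather than proving it. You need an additional argument, as in \cite[Proposition 4.3]{FO}, that actually uses positivity to exclude these terms; without it the triangular system you describe cannot be set up.
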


\begin{proof}
Parts (1), (3), and the first assertion of part (2) are proved in a way similar to \cite[Propositions 4.3, 4.7 and Corollary 4.6 (2)]{FO}. The second assertion of part (2) follows from general properties of valuations (see \cite[Sect.\ 6]{Kav}).
\end{proof}

\begin{thm}\label{Key proposition 1}
Let ${\bf i} = (i_1, \ldots, i_r) \in \breve{I}^r$ be a reduced word for $w \in \breve{W}$, and ${\bf B}^{\rm low} = \{\Xi^{\rm low} (b) \mid b \in \mathcal{B}(\infty)\} \subset U(\mathfrak{u}^-)$ a perfect basis satisfying ${\rm (P)}_1, {\rm (P)}_2$. Define an $\mathbb{R}$-linear surjective map $\Omega_{\bf i} \colon \mathbb{R}^{m_{i_1} + \cdots + m_{i_r}} \twoheadrightarrow \mathbb{R}^r$ by$:$ \[\Omega_{\bf i}(a_{1, 1}, \ldots, a_{1, m_{i_1}}, \ldots, a_{r, 1}, \ldots, a_{r, m_{i_r}}) = (a_{1, 1} + \cdots + a_{1, m_{i_1}}, \ldots, a_{r, 1} + \cdots + a_{r, m_{i_r}}).\] Then the following equalities hold for all $b \in \mathcal{B}_{\Theta(w)}(\infty)$$:$
\begin{align*}
&v_{\bf i}(\pi^\omega _w(\Xi_{\Theta(w)} ^{\rm up}(b))) = \Omega_{\bf i}(v_{\Theta({\bf i})}(\Xi_{\Theta(w)} ^{\rm up}(b))),\ {\it and}\\
&\tilde{v}_{\bf i}(\pi^\omega _w(\Xi_{\Theta(w)} ^{\rm up}(b))) = \Omega_{\bf i}(\tilde{v}_{\Theta({\bf i})}(\Xi_{\Theta(w)} ^{\rm up}(b))).
\end{align*}
\end{thm}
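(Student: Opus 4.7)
The plan is to compute $v_{\bf i}(\pi^\omega_w(\Xi_{\Theta(w)}^{\rm up}(b)))$ directly from the polynomial expansion of $\Xi_{\Theta(w)}^{\rm up}(b)$ and to use the positivity of the coefficients to rule out cancellations caused by the substitution $t_{k,l} \mapsto t_k$. Write $\Xi_{\Theta(w)}^{\rm up}(b) = \sum_{\bf a} c_{\bf a}\prod_{k,l} t_{k,l}^{a_{k,l}}$ in $\mathbb{C}[t_{k,l}]$: by Lemma \ref{l:positivity lemma} (3) every $c_{\bf a}$ is a nonnegative real number, and by Propositions \ref{string polytopes} (1) and \ref{polyhedral realizations} (1) the largest element of the support with respect to $<$ (resp.\ $\prec$) is $\Phi_{\Theta({\bf i})}(b)$ (resp.\ $\Psi_{\Theta({\bf i})}(b)$), with strictly positive coefficient. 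Lemma \ref{polynomial folding} identifies $\pi^\omega_w$ with the substitution $t_{k,l} \mapsto t_k$, so $\pi^\omega_w(\Xi_{\Theta(w)}^{\rm up}(b)) = \sum_{\bf a} c_{\bf a}\prod_k t_k^{\Omega_{\bf i}({\bf a})_k}$ and the coefficient of $\prod_k t_k^{\bf m}$ equals $\sum_{\Omega_{\bf i}({\bf a})={\bf m}} c_{\bf a} \geq 0$. Hence no cancellation occurs, and it suffices to show that $\Omega_{\bf i}(\Phi_{\Theta({\bf i})}(b))$ (resp.\ $\Omega_{\bf i}(\Psi_{\Theta({\bf i})}(b))$) is the largest element of $\Omega_{\bf i}(\mathrm{supp}\,\Xi_{\Theta(w)}^{\rm up}(b))$ with respect to $<$ (resp.\ $\prec$).

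The key inequality to prove is $\Omega_{\bf i}({\bf a}) \leq \Omega_{\bf i}(\Phi_{\Theta({\bf i})}(b))$ (in $<$) for every ${\bf a}$ in the support, strict unless ${\bf a} = \Phi_{\Theta({\bf i})}(b)$. Fix such an ${\bf a} \neq \Phi_{\Theta({\bf i})}(b)$ and let $k^*$ be the smallest index where the blocks ${\bf a}_{k^*} = (a_{k^*,1},\ldots,a_{k^*,m_{i_{k^*}}})$ and $\Phi_{\Theta({\bf i})}(b)_{k^*}$ differ. For $k < k^*$ the two coincide block-by-block, so $\Omega_{\bf i}({\bf a})_k = \Omega_{\bf i}(\Phi_{\Theta({\bf i})}(b))_k$, and the task reduces to the strict block-$k^*$ inequality $\sum_l a_{k^*,l} < \sum_l \Phi_{\Theta({\bf i})}(b)_{k^*,l}$. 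To prove this I would expand $\prod_{k,l} F_{i_{k,l}}^{a_{k,l}}$ in the perfect basis by iterating condition (iii) of Definition \ref{definition of perfect bases}, tracking when $\Xi^{\rm low}(b)$ arises, and use that by assumption (O) the generators $F_{i_{k,1}},\ldots,F_{i_{k,m_{i_k}}}$ within a single block commute and, by the argument in the proof of Proposition \ref{folding of epsilon}, the Kashiwara operators $\tilde e_{i_{k,l'}}$ preserve $\varepsilon_{i_{k,l}}$ for $l \neq l'$. Consequently, if $\tilde b \in \mathcal{B}(\infty)$ denotes the element obtained from $b$ by applying $\prod_l \tilde e_{i_{k,l}}^{\Phi_{\Theta({\bf i})}(b)_{k,l}}$ block-by-block for $k = 1,\ldots,k^*-1$ (unambiguous because those blocks agree for ${\bf a}$ and $\Phi_{\Theta({\bf i})}(b)$, and coordinate-wise agreement of a block forces the corresponding steps in the expansion to use only the leading branches of condition (iii)), then $\varepsilon_{i_{k^*,l}}(\tilde b) = \Phi_{\Theta({\bf i})}(b)_{k^*,l}$, and summing the leading- and higher-$\varepsilon$ bounds in block $k^*$ yields $\sum_l a_{k^*,l} \leq \sum_l \varepsilon_{i_{k^*,l}}(\tilde b)$, with equality only when every step takes the leading branch, which in turn forces ${\bf a}_{k^*} = \Phi_{\Theta({\bf i})}(b)_{k^*}$ and contradicts the choice of $k^*$.

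The hardest step will be this block-level bound: combining the leading and higher-$\varepsilon$ branches of condition (iii) iterated across a block, and using assumption (O) to convert what is naively a per-coordinate constraint into a sharp bound on the block sum. Once that is in hand, the largest monomial of $\pi^\omega_w(\Xi_{\Theta(w)}^{\rm up}(b))$ with respect to $<$ has exponent $\Omega_{\bf i}(\Phi_{\Theta({\bf i})}(b))$ with strictly positive coefficient (contributed in particular by ${\bf a} = \Phi_{\Theta({\bf i})}(b)$ itself), and therefore $v_{\bf i}(\pi^\omega_w(\Xi_{\Theta(w)}^{\rm up}(b))) = -\Omega_{\bf i}(\Phi_{\Theta({\bf i})}(b)) = \Omega_{\bf i}(v_{\Theta({\bf i})}(\Xi_{\Theta(w)}^{\rm up}(b)))$. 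The second equality follows by running the same argument in the reverse lexicographic order $\prec$ with $\Psi$ in place of $\Phi$, where the orthogonality input on the $\ast$-side is supplied by Corollary \ref{folding of epsilon ast} together with the $\ast$-stability of the perfect basis (condition (iv) of Definition \ref{definition of perfect bases} and Proposition \ref{p:*_property}).
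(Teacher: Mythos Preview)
Your approach is correct but follows a genuinely different route from the paper's own proof. The paper argues by induction on $r$ and on $\sum_{k,l} a_{k,l}$: it splits $b$ into a ``first-block'' piece $b_1$ and a ``remainder'' $b_2$, uses the multiplicativity of $v_{\bf i}$ together with the induction hypothesis to evaluate $v_{\bf i}(\pi^\omega_w(\Xi_{\Theta(w)}^{\rm up}(b_1)\cdot \Xi_{\Theta(w)}^{\rm up}(b_2)))$, and then sandwiches $v_{\bf i}(\pi^\omega_w(\Xi_{\Theta(w)}^{\rm up}(b)))$ between two quantities that turn out to coincide. The sandwich relies crucially on Lemma~\ref{l:positivity lemma}~(2), i.e.\ on the positivity of the structure constants in the product $\Xi_{\Theta(w)}^{\rm up}(b_1)\cdot \Xi_{\Theta(w)}^{\rm up}(b_2)$, which in turn comes from ${\rm (P)}_2$.

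By contrast, you bypass the inductive factorisation and Lemma~\ref{l:positivity lemma}~(2) entirely. Your argument is a direct combinatorial bound on the support: using commutativity within a block you may apply any $F_{i_{k^*,l_0}}^{(a_{k^*,l_0})}$ \emph{last}, so axiom~(iii) alone gives the per-coordinate bound $a_{k^*,l_0}\le \varepsilon_{i_{k^*,l_0}}(\tilde b)$ for every $l_0$, and summing over $l_0$ gives the block inequality (equality in the sum then forces equality in every coordinate, which is exactly your ``only leading branches'' observation and allows the recursion to the next block). This is a cleaner, non-inductive proof of the key estimate; the positivity hypothesis ${\rm (P)}_2$ enters only via Lemma~\ref{l:positivity lemma}~(3) to rule out cancellation under $\pi^\omega_{\bf i}$, not through the product expansion of the dual basis. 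One small remark: you do not actually need the \emph{strict} inequality $\Omega_{\bf i}({\bf a})<\Omega_{\bf i}(\Phi_{\Theta({\bf i})}(b))$ for ${\bf a}\ne\Phi_{\Theta({\bf i})}(b)$, only $\le$ together with the fact that $\Phi_{\Theta({\bf i})}(b)$ itself lies in the support with positive coefficient; but your stronger statement is true and your argument establishes it.
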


\begin{proof}
We prove the assertion only for $v_{\bf i}$ and $v_{\Theta({\bf i})}$; the proof of the assertion for $\tilde{v}_{\bf i}$ and $\tilde{v}_{\Theta({\bf i})}$ is similar. We imitate the proof of \cite[Theorem 5.1]{FO}. We write $\Phi_{\Theta({\bf i})}(b) = (a_{1, 1}, \ldots, a_{1, m_{i_1}}, \ldots, a_{r, 1}, \ldots, a_{r, m_{i_r}})$ for $b \in \mathcal{B}_{\Theta(w)}(\infty)$, and proceed by induction on $r = \ell(w)$ and $a_{1, 1} + \cdots + a_{r, m_{i_r}}$. 

We first consider the case $b \in \mathcal{B}_{s_{i_{1, 1}} \cdots s_{i_{1, m_{i_1}}}}(\infty)$, which includes the case $r = 1$. In this case, there exist $a_1, \ldots, a_{m_{i_1}} \in \z_{\ge 0}$ such that $b = \tilde{f}_{i_{1, 1}} ^{a_1} \cdots \tilde{f}_{i_{1, m_{i_1}}} ^{a_{m_{i_1}}} b_\infty$. Then it follows by the definition of $\Phi_{\Theta({\bf i})}$ and assumption (O) in Section 4 that 
\begin{align*}
-v_{\Theta({\bf i})}(\Xi_{\Theta(w)} ^{\rm up}(b)) &= \Phi_{\Theta({\bf i})}(b)\quad({\rm by\ Proposition}\ \ref{string polytopes}\ (1))\\
&= (a_1, \ldots, a_{m_{i_1}}, 0, \ldots, 0).
\end{align*}
Hence we deduce by the definition of $v_{\Theta({\bf i})}$ that $\Xi_{\Theta(w)} ^{\rm up}(b) = c t_{1, 1} ^{a_1} \cdots t_{1, m_{i_1}} ^{a_{m_{i_1}}} + ({\rm other\ terms})$ for some $c \in \c \setminus \{0\}$, where ``other terms'' means a linear combination of monomials of degree $a_1 + \cdots + a_{m_{i_1}}$ that are not equal to $t_{1, 1} ^{a_1} \cdots t_{1, m_{i_1}} ^{a_{m_{i_1}}}$. Here, Lemma \ref{l:positivity lemma} (3) implies that $c \in \mathbb{R}_{>0}$, and that the coefficients of the ``other terms'' are also positive real numbers. Therefore, we see from Lemma \ref{polynomial folding} that $\pi^\omega _w(\Xi_{\Theta(w)} ^{\rm up}(b)) = c^\prime t_1 ^{a_1 + \cdots + a_{m_{i_1}}} + ({\rm other\ terms})$ for some $c^\prime \in \mathbb{R}_{>0}$, where ``other terms'' means a linear combination of monomials in $\mathbb{C}[t_1, \ldots, t_r]$ of degree $a_1 + \cdots + a_{m_{i_1}}$ that are not equal to $t_1 ^{a_1 + \cdots + a_{m_{i_1}}}$. This implies by the definition of $v_{\bf i}$ that 
\begin{align*}
v_{\bf i}(\pi^\omega _w(\Xi_{\Theta(w)} ^{\rm up}(b))) &= -(a_1 + \cdots + a_{m_{i_1}}, 0, \ldots, 0)\\
&= \Omega_{\bf i}(v_{\Theta({\bf i})}(\Xi_{\Theta(w)} ^{\rm up}(b))).
\end{align*}

We next consider the case $r \ge 2$ and $a_{1, 1} = \cdots = a_{1, m_{i_1}} = 0$. In this case, $b$ is an element of $\mathcal{B}_{\Theta(w_{\ge 2})}(\infty)$, where $w_{\ge 2} \coloneqq s_{i_2} \cdots s_{i_r}$. By the definition of $v_{\Theta({\bf i})}$, the equalities $a_{1, 1} = \cdots = a_{1, m_{i_1}} = 0$ imply that $t_{1, 1}, \ldots, t_{1, m_{i_1}}$ do not appear in $\Xi_{\Theta(w)} ^{\rm up}(b)$, and hence that $t_1$ does not appear in $\pi^\omega _w(\Xi_{\Theta(w)} ^{\rm up}(b)) \in \mathbb{C}[t_1, \ldots, t_r]$. From these, we deduce that 
\begin{align*}
v_{\bf i}(\pi^\omega _w(\Xi_{\Theta(w)} ^{\rm up}(b))) &= (0, v_{{\bf i}_{\ge 2}}(\pi^\omega _{w_{\ge 2}}(\Xi_{\Theta(w_{\ge 2})} ^{\rm up}(b))))\\
&= (0, \Omega_{{\bf i}_{\ge 2}}(v_{\Theta({\bf i}_{\ge 2})}(\Xi_{\Theta(w_{\ge 2})} ^{\rm up}(b))))\quad({\rm by\ the\ induction}\ ({\rm on}\ r)\ {\rm hypothesis})\\
&= \Omega_{\bf i}(v_{\Theta({\bf i})}(\Xi_{\Theta(w)} ^{\rm up}(b))),
\end{align*}
where ${\bf i}_{\ge 2} \coloneqq (i_2, \ldots, i_r)$ is a reduced word for $w_{\ge 2}$. 

Finally, consider the case $(a_{1, 1}, \ldots, a_{1, m_{i_1}}) \neq (0, \ldots, 0)$ and $b \notin \mathcal{B}_{s_{i_{1, 1}} \cdots s_{i_{1, m_{i_1}}}}(\infty)$. We set $b_1 \coloneqq \tilde{f}_{i_{1, 1}} ^{a_{1, 1}} \cdots \tilde{f}_{i_{1, m_{i_1}}} ^{a_{1, m_{i_1}}} b_\infty$ and $b_2 \coloneqq \tilde{f}_{i_{2, 1}} ^{a_{2, 1}} \cdots \tilde{f}_{i_{r, m_{i_r}}} ^{a_{r, m_{i_r}}} b_\infty$. Then it follows by the definition of $\Phi_{\Theta({\bf i})}$ that $\Phi_{\Theta({\bf i})}(b_1) = (a_{1, 1}, \ldots, a_{1, m_{i_1}}, 0, \ldots, 0)$ and $\Phi_{\Theta({\bf i})}(b_2) = (0, \ldots, 0, a_{2, 1}, \ldots, a_{r, m_{i_r}})$; here we have used assumption (O) in Section 4. Hence Proposition \ref{string polytopes} (1) implies that 
\begin{align*}
v_{\Theta({\bf i})}(\Xi_{\Theta(w)} ^{\rm up}(b)) &= -(a_{1, 1}, \ldots, a_{r, m_{i_r}})\\
&= -(a_{1, 1}, \ldots, a_{1, m_{i_1}}, 0, \ldots, 0) -(0, \ldots, 0, a_{2, 1}, \ldots, a_{r, m_{i_r}})\\
&= v_{\Theta({\bf i})}(\Xi_{\Theta(w)} ^{\rm up}(b_1)) + v_{\Theta({\bf i})}(\Xi_{\Theta(w)} ^{\rm up}(b_2)).
\end{align*}
Also, we deduce from the induction (on $a_{1, 1} + \cdots + a_{r, m_{i_r}}$) hypothesis that 
\begin{align*}
&\Omega_{\bf i}\left(v_{\Theta({\bf i})}(\Xi_{\Theta(w)} ^{\rm up}(b_1)) + v_{\Theta({\bf i})}(\Xi_{\Theta(w)} ^{\rm up}(b_2))\right)\\
=\ &\Omega_{\bf i}\left(v_{\Theta({\bf i})}(\Xi_{\Theta(w)} ^{\rm up}(b_1))\right) + \Omega_{\bf i}\left(v_{\Theta({\bf i})}(\Xi_{\Theta(w)} ^{\rm up}(b_2))\right)\\
=\ &v_{\bf i}(\pi^\omega _w(\Xi_{\Theta(w)} ^{\rm up}(b_1))) + v_{\bf i}(\pi^\omega _w(\Xi_{\Theta(w)} ^{\rm up}(b_2)))\\
=\ &v_{\bf i}(\pi^\omega _w(\Xi_{\Theta(w)} ^{\rm up}(b_1) \cdot \Xi_{\Theta(w)} ^{\rm up}(b_2)))\\
&({\rm since}\ v_{\bf i}\ {\rm is\ a\ valuation\ and}\ \pi^\omega _w\ {\rm is\ a}\ \c\mathchar`-{\rm algebra\ homomorphism}).
\end{align*}
From these, it follows that 
\begin{align}\label{Key inequalities 1}
v_{\bf i}(\pi^\omega _w(\Xi_{\Theta(w)} ^{\rm up}(b_1) \cdot \Xi_{\Theta(w)} ^{\rm up}(b_2))) = \Omega_{\bf i} (v_{\Theta({\bf i})}(\Xi_{\Theta(w)} ^{\rm up}(b))). 
\end{align}
Here, by Lemma \ref{l:positivity lemma} (2), we have
\begin{align}\label{folded expansion 1}
\Xi_{\Theta(w)} ^{\rm up}(b_1) \cdot \Xi_{\Theta(w)} ^{\rm up}(b_2) = \sum_{b_3 \in \mathcal{B}_{\Theta(w)}(\infty)} C^{(b_3)} _{b_1, b_2} \Xi_{\Theta(w)} ^{\rm up}(b_3) 
\end{align}
for some $C^{(b_3)} _{b_1, b_2} \in \r_{\ge 0}$, $b_3 \in \mathcal{B}_{\Theta(w)}(\infty)$, with $C_{b_1, b_2} ^{(b)} \neq 0$. By applying $\pi^\omega _w$ to (\ref{folded expansion 1}), we obtain
\begin{align}\label{folded expansion 2}
\pi^\omega _w(\Xi_{\Theta(w)} ^{\rm up}(b_1) \cdot \Xi_{\Theta(w)} ^{\rm up}(b_2)) = \sum_{b_3 \in \mathcal{B}_{\Theta(w)}(\infty)} C^{(b_3)} _{b_1, b_2} \pi^\omega _w(\Xi_{\Theta(w)} ^{\rm up}(b_3)). 
\end{align}
Since $C^{(b_3)} _{b_1, b_2} \in \r_{\ge 0}$ for all $b_3 \in \mathcal{B}_{\Theta(w)}(\infty)$, Lemmas \ref{polynomial folding} and \ref{l:positivity lemma} (3) imply that no cancellations of monomials occur in the sum on the right-hand side of (\ref{folded expansion 2}). Therefore, we deduce by the definition of $v_{\bf i}$ that
\begin{align*}
-v_{\bf i}(\pi^\omega _w(\Xi_{\Theta(w)} ^{\rm up}(b_1) \cdot \Xi_{\Theta(w)} ^{\rm up}(b_2))) = \max\{-v_{\bf i}(\pi^\omega _w(\Xi_{\Theta(w)} ^{\rm up}(b_3))) \mid b_3 \in \mathcal{B}_{\Theta(w)}(\infty),\ C^{(b_3)} _{b_1, b_2} \neq 0\},
\end{align*}
where ``max'' means the maximum with respect to the lexicographic order $<$ in Definition \ref{d:valuations}. Since $C_{b_1, b_2} ^{(b)} \neq 0$, we obtain
\begin{align}\label{Key inequalities 2}
-v_{\bf i}(\pi^\omega _w(\Xi_{\Theta(w)} ^{\rm up}(b))) \le -v_{\bf i}(\pi^\omega _w(\Xi_{\Theta(w)} ^{\rm up}(b_1) \cdot \Xi_{\Theta(w)} ^{\rm up}(b_2))).
\end{align}
Now, by the definition of $v_{\Theta({\bf i})}$ together with the equality $-v_{\Theta({\bf i})}(\Xi_{\Theta(w)} ^{\rm up}(b)) = (a_{1, 1}, \ldots, a_{r, m_{i_r}})$, the monomial $t_{1, 1} ^{a_{1, 1}} \cdots t_{r, m_{i_r}} ^{a_{r, m_{i_r}}}$ appears in the polynomial $\Xi_{\Theta(w)} ^{\rm up}(b) \in \c[t_{1, 1}, \ldots, t_{r, m_{i_r}}]$. Since $C_{b_1, b_2} ^{(b)} \neq 0$ and $C^{(b_3)} _{b_1, b_2} \in \r_{\ge 0}$ for all $b_3 \in \mathcal{B}_{\Theta(w)}(\infty)$, we see by Lemmas \ref{polynomial folding} and \ref{l:positivity lemma} (3) that the monomial \[t_1 ^{a_{1, 1} + \cdots + a_{1, m_{i_1}}} \cdots t_r ^{a_{r, 1} + \cdots + a_{r, m_{i_r}}}\] appears in the polynomial $\pi^\omega _w(\Xi_{\Theta(w)} ^{\rm up}(b)) \in \c[t_1, \ldots, t_r]$, which implies that
\begin{equation}\label{Key inequalities 3}
\begin{aligned}
-\Omega_{\bf i} (v_{\Theta({\bf i})}(\Xi_{\Theta(w)} ^{\rm up}(b))) &= (a_{1, 1} + \cdots + a_{1, m_{i_1}}, \ldots, a_{r, 1} + \cdots + a_{r, m_{i_r}})\\ 
&\le -v_{\bf i}(\pi^\omega _w(\Xi_{\Theta(w)} ^{\rm up}(b))).
\end{aligned}
\end{equation}
By combining (\ref{Key inequalities 1}), (\ref{Key inequalities 2}), and (\ref{Key inequalities 3}), we conclude that \[\Omega_{\bf i} (v_{\Theta({\bf i})}(\Xi_{\Theta(w)} ^{\rm up}(b))) = v_{\bf i}(\pi^\omega _w(\Xi_{\Theta(w)} ^{\rm up}(b))) = v_{\bf i}(\pi^\omega _w(\Xi_{\Theta(w)} ^{\rm up}(b_1) \cdot \Xi_{\Theta(w)} ^{\rm up}(b_2))).\] This proves the theorem.
\end{proof}

Denote by $P^\prime \subset (\mathfrak{t}^\ast)^0$ the subgroup generated by $\varpi_i ^\prime \coloneqq \frac{1}{m_i}\sum_{0 \le k < m_i} \varpi_{\omega^k(i)}$, $i \in \breve{I}$. Since the set $\{h_i ^\prime \mid i \in \breve{I}\}$ is regarded as the set of simple coroots of $\mathfrak{g}^\omega$, the subgroup $P^\prime$ is identified with the weight lattice for $\mathfrak{g}^\omega$; in particular, an element $\lambda \in P \cap (\mathfrak{t}^\ast)^0$ gives an integral weight $\hat{\lambda}$ for $\mathfrak{g}^\omega$. Recall that for $w \in \breve{W}$, the Schubert variety $X(w) \subset G^\omega/B^\omega \simeq (G/B)^\omega$ is identified with a Zariski closed subvariety of $X(\Theta(w))$. The inclusion map $X(w) \hookrightarrow X(\Theta(w))$ induces a $B^\omega$-module homomorphism $H^0(X(\Theta(w)), \mathcal{L}_{\lambda}) \rightarrow H^0(X(w), \mathcal{L}_{\hat{\lambda}})$ (denoted also by $\pi^\omega _w$) for $\lambda \in P_+ \cap (\mathfrak{t}^\ast)^0$. Now we define $\mathbb{C}$-linear injective maps $\iota_\lambda \colon H^0(X(\Theta(w)), \mathcal{L}_{\lambda}) \hookrightarrow \mathbb{C}[U^- \cap X(\Theta(w))]$ and $\iota_{\hat{\lambda}} \colon H^0(X(w), \mathcal{L}_{\hat{\lambda}}) \hookrightarrow \mathbb{C}[(U^-)^\omega \cap X(w)]$ as in Lemma \ref{d:iota}. The following is an immediate consequence of the definitions. 

\vspace{2mm}\begin{prop}\label{fixed point lambda proposition}
For $\lambda \in P_+ \cap (\mathfrak{t}^\ast)^0$ and $w \in \breve{W}$, the following diagram is commutative$:$
\begin{align*}
\xymatrix{\c[U^- \cap X(\Theta(w))] \ar[r]^-{\pi^\omega _w} & \c[(U^-)^\omega \cap X(w)]\\
H^0(X(\Theta(w)), \mathcal{L}_{\lambda}) \ar@{^{(}->}[u]^-{\iota_\lambda} \ar[r]^-{\pi^\omega _w} & H^0(X(w), \mathcal{L}_{\hat{\lambda}})\ar@{^{(}->}[u]^-{\iota_{\hat{\lambda}}}.}
\end{align*}
\end{prop}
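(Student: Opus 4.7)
The plan is to reduce commutativity of the diagram to the single identity $\pi^\omega_w(\tau_\lambda) = \tau_{\hat\lambda}$ in $H^0(X(w), \mathcal L_{\hat\lambda})$, and then prove that identity by a one-line weight-space argument. First I would note that the character $\lambda$ of $B$, being $\omega$-invariant, restricts to $\hat\lambda$ on $B^\omega$, so the line bundle $\mathcal L_\lambda$ on $G/B$ pulls back along $G^\omega/B^\omega \hookrightarrow G/B$ to $\mathcal L_{\hat\lambda}$; this guarantees that the lower horizontal $\pi^\omega_w$ lands in $H^0(X(w), \mathcal L_{\hat\lambda})$ as claimed. For any $\tau \in H^0(X(\Theta(w)), \mathcal L_\lambda)$ one then has
\begin{align*}
\pi^\omega_w(\iota_\lambda(\tau)) = \left(\tau/\tau_\lambda\right)\big|_{(U^-)^\omega \cap X(w)} = \left(\pi^\omega_w(\tau)/\pi^\omega_w(\tau_\lambda)\right)\big|_{(U^-)^\omega \cap X(w)},
\end{align*}
where $\pi^\omega_w(\tau_\lambda)$ is nonvanishing on $(U^-)^\omega \cap X(w)$ because, by Lemma \ref{d:iota}, $\tau_\lambda$ is already nonvanishing on the larger open set $U^- \cap X(\Theta(w)) \supset (U^-)^\omega \cap X(w)$. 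Comparing with $\iota_{\hat\lambda}(\pi^\omega_w(\tau)) = (\pi^\omega_w(\tau)/\tau_{\hat\lambda})|_{(U^-)^\omega \cap X(w)}$ shows that the diagram commutes as soon as $\pi^\omega_w(\tau_\lambda) = \tau_{\hat\lambda}$.

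To prove this identity, I would pass to the Borel--Weil description $H^0(X(\Theta(w)), \mathcal L_\lambda) \cong V_{\Theta(w)}(\lambda)^\ast$ and $H^0(X(w), \mathcal L_{\hat\lambda}) \cong V_w(\hat\lambda)^\ast$, interpreting $\pi^\omega_w$ as the transpose of the $B^\omega$-equivariant inclusion $V_w(\hat\lambda) \hookrightarrow V_{\Theta(w)}(\lambda)$. The latter inclusion arises by viewing $V(\hat\lambda) \hookrightarrow V(\lambda)$ as the $\mathfrak g^\omega$-submodule generated by the common highest weight vector $v_{\hat\lambda} = v_\lambda$, together with the observation that the extremal weight vectors $v_{w\hat\lambda}$ and $v_{\Theta(w)\lambda}$ may be chosen equal (the identity $\Theta(w)|_{(\mathfrak t^\ast)^0} = P_\omega^\ast \circ w \circ (P_\omega^\ast)^{-1}$ ensures they lie in the same one-dimensional weight space of $V(\lambda)$). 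Once this is in place, I would characterize $\tau_\lambda = \Xi^{\rm up}_{\lambda, \Theta(w)}(b_\lambda)$ intrinsically as the unique element of $V_{\Theta(w)}(\lambda)^\ast$ of $T$-weight $-\lambda$ satisfying $\tau_\lambda(v_\lambda) = 1$, uniqueness following from the one-dimensionality of the $\lambda$-weight space of $V_{\Theta(w)}(\lambda)$; likewise for $\tau_{\hat\lambda}$.

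With those characterizations, the argument is immediate: $\pi^\omega_w(\tau_\lambda) \in V_w(\hat\lambda)^\ast$ is a $T^\omega$-eigenvector of weight $-\lambda|_{\mathfrak t^\omega} = -\hat\lambda$, so it lies in the one-dimensional $(-\hat\lambda)$-weight space and equals $c\,\tau_{\hat\lambda}$ for some $c \in \mathbb C$; evaluating at $v_{\hat\lambda} = v_\lambda$ gives $c = \tau_\lambda(v_\lambda) = 1$. The main obstacle I anticipate is the identification described in the second paragraph, namely realising the geometric restriction $\pi^\omega_w$ on global sections as the dual of an honest $B^\omega$-equivariant inclusion of Demazure modules; once this is settled, the weight-space argument cleanly sidesteps any need to compare the (possibly distinct) perfect bases used to normalise $\tau_\lambda$ and $\tau_{\hat\lambda}$.
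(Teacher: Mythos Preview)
Your proposal is correct. The paper itself offers no proof whatsoever, declaring the proposition ``an immediate consequence of the definitions.'' What you have written is precisely the natural way to unpack that claim: the only content in the commutativity is the identity $\pi^\omega_w(\tau_\lambda)=\tau_{\hat\lambda}$, and your weight-space argument (both sections lie in the one-dimensional $(-\hat\lambda)$-weight space of $V_w(\hat\lambda)^\ast$ and both evaluate to $1$ on the common highest weight vector) is exactly how one checks it. In particular you correctly observe that $\tau_\lambda$ admits an intrinsic characterisation independent of the perfect basis used to define it, so no compatibility between the perfect bases for $\mathfrak g$ and $\mathfrak g^\omega$ is needed.

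The obstacle you flag---identifying the geometric restriction on global sections with the transpose of the inclusion $V_w(\hat\lambda)\hookrightarrow V_{\Theta(w)}(\lambda)$ of Demazure modules---is real but standard Borel--Weil theory, and the paper is evidently taking it for granted. Once one accepts that the Borel--Weil isomorphism is functorial for equivariant closed immersions and that the highest weight line of $V(\lambda)$ is fixed by $\omega$ (hence serves as the highest weight line of $V(\hat\lambda)$), everything you wrote goes through. So your argument is not a different route from the paper's; it is simply a fully justified version of what the paper asserts without proof.
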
\vspace{2mm}

From this, we obtain the following by Propositions \ref{vanishing} (2), \ref{fixed point lambda proposition}, and Theorem \ref{Key proposition 1}.

\vspace{2mm}\begin{cor}\label{inclusion property}
The following equalities hold$:$
\begin{align*}
&\Omega_{\bf i}(\Delta(X(\Theta(w)), \mathcal{L}_\lambda, v_{\Theta({\bf i})}, \tau_\lambda)) \subset \Delta(X(w), \mathcal{L}_{\hat{\lambda}}, v_{\bf i}, \tau_{\hat{\lambda}}),\ {\it and}\\
&\Omega_{\bf i}(\Delta(X(\Theta(w)), \mathcal{L}_\lambda, \tilde{v}_{\Theta({\bf i})}, \tau_\lambda)) \subset \Delta(X(w), \mathcal{L}_{\hat{\lambda}}, \tilde{v}_{\bf i}, \tau_{\hat{\lambda}}).
\end{align*}
\end{cor}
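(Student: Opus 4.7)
The approach is to reduce the inclusion of Newton--Okounkov polytopes to a pointwise inclusion of their generating semigroups $S(\cdot)$, and then to establish the latter on basis-element sections using the three cited inputs. Writing $\widetilde{\Omega}_{\bf i}\colon \mathbb{R}\times\mathbb{R}^{m_{i_1}+\cdots+m_{i_r}}\to \mathbb{R}\times\mathbb{R}^r$ for the map $\mathrm{id}_{\mathbb{R}}\times \Omega_{\bf i}$, continuity and $\mathbb{R}$-linearity of $\widetilde{\Omega}_{\bf i}$ imply that it takes the smallest closed real cone generated by a set $T$ into the smallest closed real cone generated by $\widetilde{\Omega}_{\bf i}(T)$. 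Hence it is enough to show
\[\widetilde{\Omega}_{\bf i}\bigl(S(X(\Theta(w)),\mathcal{L}_\lambda,v_{\Theta({\bf i})},\tau_\lambda)\bigr)\subset C(X(w),\mathcal{L}_{\hat{\lambda}},v_{\bf i},\tau_{\hat{\lambda}});\]
intersecting the resulting cone inclusion with the height-one slice $\{1\}\times\mathbb{R}^r$ then yields the claimed inclusion of polytopes, and the same strategy handles $\tilde v$.

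To prove the inclusion of semigroups, fix $k>0$ and $\sigma\in H^0(X(\Theta(w)),\mathcal{L}_\lambda^{\otimes k})\setminus\{0\}$. Expand $\iota_{k\lambda}(\sigma)$ in the dual perfect basis: by Proposition~\ref{vanishing}(2) this is an expansion in $\{\Xi^{\rm up}_{\Theta(w)}(b)\mid b\in\widetilde{\mathcal{B}}_{\Theta(w)}(k\lambda)\}$. Since by Proposition~\ref{string polytopes}(1) the valuations $v_{\Theta({\bf i})}(\Xi^{\rm up}_{\Theta(w)}(b))$ are pairwise distinct (they are the injective string parametrizations), the value $v_{\Theta({\bf i})}(\sigma/\tau_\lambda^k)$ coincides with $v_{\Theta({\bf i})}(\Xi^{\rm up}_{\Theta(w)}(b))$ for the unique leading-term index $b$. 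Replacing $\sigma$ by the leading basis section $\Xi^{\rm up}_{k\lambda,\Theta(w)}(\pi_{k\lambda}(b))$ leaves the point $(k,v_{\Theta({\bf i})}(\sigma/\tau_\lambda^k))$ unchanged, so we may assume $\sigma=\Xi^{\rm up}_{k\lambda,\Theta(w)}(\pi_{k\lambda}(b))$, whence $\iota_{k\lambda}(\sigma)=\Xi^{\rm up}_{\Theta(w)}(b)$.

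Now set $\sigma':=\pi^\omega_w(\sigma)\in H^0(X(w),\mathcal{L}_{\hat\lambda}^{\otimes k})$. The commutative square of Proposition~\ref{fixed point lambda proposition} yields
\[\iota_{k\hat\lambda}(\sigma')=\pi^\omega_w(\iota_{k\lambda}(\sigma))=\pi^\omega_w(\Xi^{\rm up}_{\Theta(w)}(b)),\]
which is nonzero because its valuation appears in the statement of Theorem~\ref{Key proposition 1}; injectivity of $\iota_{k\hat\lambda}$ then gives $\sigma'\ne 0$. Applying Theorem~\ref{Key proposition 1} directly,
\[v_{\bf i}(\sigma'/\tau_{\hat\lambda}^k)=v_{\bf i}\bigl(\pi^\omega_w(\Xi^{\rm up}_{\Theta(w)}(b))\bigr)=\Omega_{\bf i}\bigl(v_{\Theta({\bf i})}(\Xi^{\rm up}_{\Theta(w)}(b))\bigr)=\Omega_{\bf i}\bigl(v_{\Theta({\bf i})}(\sigma/\tau_\lambda^k)\bigr),\]
so $\widetilde{\Omega}_{\bf i}(k,v_{\Theta({\bf i})}(\sigma/\tau_\lambda^k))=(k,v_{\bf i}(\sigma'/\tau_{\hat\lambda}^k))\in S(X(w),\mathcal{L}_{\hat\lambda},v_{\bf i},\tau_{\hat\lambda})$. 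The argument for $\tilde v_{\bf i}$ and $\tilde v_{\Theta({\bf i})}$ is formally identical, invoking instead the second equality of Theorem~\ref{Key proposition 1} and Proposition~\ref{polyhedral realizations}(1) in place of Proposition~\ref{string polytopes}(1).

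The only delicate point, which the three cited results are precisely designed to address, is the non-vanishing of $\pi^\omega_w(\Xi^{\rm up}_{\Theta(w)}(b))$: without it one cannot produce a well-defined section $\sigma'$ realising the valuation $\Omega_{\bf i}(v_{\Theta({\bf i})}(\sigma/\tau_\lambda^k))$. Since this non-vanishing is already built into Theorem~\ref{Key proposition 1} (whose statement requires $v_{\bf i}$ of that element to be defined), the remainder of the argument is a direct diagram chase combining Propositions~\ref{vanishing}(2) and \ref{fixed point lambda proposition} with Theorem~\ref{Key proposition 1}.
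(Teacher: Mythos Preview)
Your argument is correct and is precisely the detailed unpacking of what the paper records in one line as ``by Propositions~\ref{vanishing}(2), \ref{fixed point lambda proposition}, and Theorem~\ref{Key proposition 1}'': reduce an arbitrary section to a basis section via Proposition~\ref{vanishing}(2), push it through $\pi^\omega_w$ using the commutative square of Proposition~\ref{fixed point lambda proposition}, and read off the valuation via Theorem~\ref{Key proposition 1}. The non-vanishing of $\pi^\omega_w(\Xi^{\rm up}_{\Theta(w)}(b))$ is indeed implicit in the statement of Theorem~\ref{Key proposition 1}, as you note.
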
\vspace{2mm}

The following is the main result of this paper.

\vspace{2mm}\begin{thm}\label{surjectivity}
Let ${\bf i} = (i_1, \ldots, i_r) \in \breve{I}^r$ be a reduced word for $w \in \breve{W}$, and $\lambda \in P_+ \cap (\mathfrak{t}^\ast)^0$. Then the maps 
\begin{align*}
&\Omega_{\bf i} \colon \Delta(X(\Theta(w)), \mathcal{L}_\lambda, v_{\Theta({\bf i})}, \tau_\lambda) \rightarrow \Delta(X(w), \mathcal{L}_{\hat{\lambda}}, v_{\bf i}, \tau_{\hat{\lambda}})\ {\it and}\\ 
&\Omega_{\bf i} \colon \Delta(X(\Theta(w)), \mathcal{L}_\lambda, \tilde{v}_{\Theta({\bf i})}, \tau_\lambda) \rightarrow \Delta(X(w), \mathcal{L}_{\hat{\lambda}}, \tilde{v}_{\bf i}, \tau_{\hat{\lambda}})
\end{align*}
are surjective.
\end{thm}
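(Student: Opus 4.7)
The inclusion $\Omega_{\bf i}\bigl(\Delta(X(\Theta(w)),\mathcal L_\lambda,v_{\Theta({\bf i})},\tau_\lambda)\bigr)\subseteq\Delta(X(w),\mathcal L_{\hat\lambda},v_{\bf i},\tau_{\hat\lambda})$ is already given by Corollary \ref{inclusion property}, so only the opposite inclusion remains. Translating to Littelmann's string polytopes via Proposition \ref{string polytopes}(2) (the Newton--Okounkov polytopes are the negatives of these), the task reduces to showing $\Delta_{\bf i}^{(\hat\lambda,w)}\subseteq\Omega_{\bf i}\bigl(\Delta_{\Theta({\bf i})}^{(\lambda,\Theta(w))}\bigr)$, with the parallel statement for $\widetilde\Delta$ using $\tilde v_{\bf i}$ and Proposition \ref{polyhedral realizations}(2).

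The plan is to chase a given point $b\in\Delta_{\bf i}^{(\hat\lambda,w)}$ around the square of maps shown in the introduction, using the auxiliary pair $(\mathfrak g',\omega')$ satisfying ${\rm (C)}_1$ and ${\rm (C)}_2$. The crucial preliminary step is to set up the auxiliary weight correctly. I would choose $\lambda'\in(P')_+\cap((\mathfrak t')^\ast)^0$ so that, under the Lie algebra isomorphism $\breve{\mathfrak g}'\simeq\mathfrak g^\omega$ coming from ${\rm (C)}_1$ together with Langlands duality, the weight $\breve\lambda'$ of $\breve{\mathfrak g}'$ corresponds to $\hat\lambda$. A direct computation with the formulas $h_i'=\sum_k h_{\omega^k(i)}$ and $P_\omega^{-1}(\breve h_i)=\tfrac1{m_i}\sum_k h_{\omega^k(i)}$ (and their analogues for $\omega'$) then shows that under the twin isomorphism $(\mathfrak g')^{\omega'}\simeq\breve{\mathfrak g}$ the fixed-point weight $\widehat{\lambda'}$ corresponds not to $\breve\lambda$ itself but to $L\breve\lambda$, where $L=m_i m'_i$ is the constant from ${\rm (C)}_2$. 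By the scaling identity $\Delta_{\bf i}^{(k\mu,w)}=k\cdot\Delta_{\bf i}^{(\mu,w)}$, this means the small polytope on the $(\mathfrak g',\omega')$-side of the loop is $L\cdot\Delta_{\bf i}^{(\breve\lambda,w)}$ rather than $\Delta_{\bf i}^{(\breve\lambda,w)}$.

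Given this, the chase goes as follows. Corollary \ref{c:folding of polytopes(crystal)} applied to $(\mathfrak g',\omega')$ gives $\Upsilon_{\bf i}^{(\omega')}(b)\in\Delta_{\Theta'({\bf i})}^{(\lambda',\Theta'(w))}$; Corollary \ref{inclusion property} for $(\mathfrak g',\omega')$ then places $\Omega_{\bf i}^{(\omega')}\Upsilon_{\bf i}^{(\omega')}(b)$ in $L\cdot\Delta_{\bf i}^{(\breve\lambda,w)}$, and by direct computation this element equals $M'b$ with $M'=\mathrm{diag}(m'_{i_1},\ldots,m'_{i_r})$. Corollary \ref{c:folding of polytopes(crystal)} applied to $(\mathfrak g,\omega)$, combined with the same scaling for the weight $L\breve\lambda$, yields $\Upsilon_{\bf i}^{(\omega)}(M'b)\in L\cdot\Delta_{\Theta({\bf i})}^{(\lambda,\Theta(w))}$. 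A final application of $\Omega_{\bf i}^{(\omega)}$ together with ${\rm (C)}_2$ gives
\[
L\,b \;=\; MM'b \;=\; \Omega_{\bf i}^{(\omega)}\Upsilon_{\bf i}^{(\omega)}(M'b) \;\in\; \Omega_{\bf i}^{(\omega)}\bigl(L\cdot\Delta_{\Theta({\bf i})}^{(\lambda,\Theta(w))}\bigr) \;=\; L\cdot\Omega_{\bf i}^{(\omega)}\bigl(\Delta_{\Theta({\bf i})}^{(\lambda,\Theta(w))}\bigr)
\]
by linearity of $\Omega_{\bf i}^{(\omega)}$; dividing by $L$ yields $b\in\Omega_{\bf i}^{(\omega)}\bigl(\Delta_{\Theta({\bf i})}^{(\lambda,\Theta(w))}\bigr)$, which is the desired inclusion.

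The main obstacle is precisely the weight-bookkeeping in the first step: the asymmetry between $h_i'=\sum_k h_{\omega^k(i)}$ (no division by $m_i$) and $P_\omega^{-1}(\breve h_i)=\tfrac1{m_i}\sum_k h_{\omega^k(i)}$ (with division) is what produces the factor $L$ when $\hat\lambda$ is transported around the diamond of identifications, and it is this dilation of the intermediate polytope that is ultimately divided out at the end. Without tracking the factor $L$, the naive chase would give the absurd conclusion $L\cdot\Delta_{\bf i}^{(\hat\lambda,w)}\subseteq\Delta_{\bf i}^{(\hat\lambda,w)}$, which fails for every non-trivial bounded polytope when $L>1$. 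The Nakashima--Zelevinsky case is handled identically, with $\tilde v_{\bf i}$ in place of $v_{\bf i}$ and the tilde analogues of Propositions \ref{string polytopes}, \ref{polyhedral realizations}, and Corollary \ref{c:folding of polytopes(crystal)} throughout.
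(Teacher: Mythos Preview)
Your proof is correct and follows essentially the same route as the paper's own argument: both chase a point around the diamond $\Upsilon_{\bf i}^{(\omega')}\to\Omega_{\bf i}^{(\omega')}\to\Upsilon_{\bf i}^{(\omega)}\to\Omega_{\bf i}^{(\omega)}$ and use that the composite is $L\cdot{\rm id}_{\mathbb{R}^r}$. The paper normalizes by replacing $\lambda$ with $L\lambda$ at the outset (so the final map is literally the one whose surjectivity is wanted) whereas you carry the factor $L$ through and divide at the end; your version is more explicit about the weight bookkeeping---in particular the identity $\widehat{\lambda'}=L\breve\lambda$---which the paper leaves to the reader.
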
\vspace{2mm}

In order to prove this theorem, we consider a pair $((\mathfrak{g}, \omega \colon I \rightarrow I), (\mathfrak{g}^\prime, \omega^\prime \colon I^\prime \rightarrow I^\prime))$ of a simply-laced simple Lie algebra and its Dynkin diagram automorphism. We assume that these satisfy the following conditions:
\begin{enumerate}
\item[${\rm (C)}_1$] the fixed point Lie subalgebra $(\mathfrak{g}^\prime)^{\omega^\prime}$ is isomorphic to the orbit Lie algebra $\breve{\mathfrak{g}}$ associated to $\omega$; this condition implies that the index set $\breve{I}$ for $\breve{\mathfrak{g}}$ is identified with the index set $\breve{I}^\prime\ (= \breve{(I^\prime)})$ for $(\mathfrak{g}^\prime)^{\omega^\prime}$;
\item[${\rm (C)}_2$] if we set $m_i \coloneqq \min\{k \in \mathbb{Z}_{>0} \mid \omega^k(i) = i\}$, $i \in \breve{I}$, and $m_i ^\prime \coloneqq \min\{k \in \mathbb{Z}_{>0} \mid (\omega^\prime)^k(i) = i\}$, $i \in \breve{I}^\prime$, then the product $m_i \cdot m_i ^\prime$ is independent of the choice of $i \in \breve{I} \simeq \breve{I}^\prime$.
\end{enumerate}

\vspace{2mm}\begin{rem}\normalfont
Since the orbit Lie algebra $\breve{\mathfrak g}$ associated to $\omega$ is the (Langlands) dual Lie algebra of the fixed point Lie subalgebra ${\mathfrak g}^\omega$, a pair $((\mathfrak{g}, \omega), (\mathfrak{g}^\prime, \omega^\prime))$ satisfies conditions ${\rm (C)}_1$ and ${\rm (C)}_2$ if and only if a pair $((\mathfrak{g}^\prime, \omega^\prime), (\mathfrak{g}, \omega))$ satisfies these.
\end{rem}\vspace{2mm}

The following three figures give the list of nontrivial pairs satisfying conditions ${\rm (C)}_1$ and ${\rm (C)}_2$:
\begin{align*}
\xymatrix{                   
      & A_{2n-1}\ \begin{xy}
\ar@{-} (20,4) *++!D{} *\cir<3pt>{};
(30,4) *++!D!R(0.4){} *\cir<3pt>{}="B",
\ar@{-} "B";(35,4) \ar@{.} (35,4);(40,4)^*!U{}
\ar@{-} (40,4);(45,4) *++!D{} *\cir<3pt>{}="C"
\ar@{-} "C";(54,0) *++!L{} *\cir<3pt>{}="D"
\ar@{-} "D";(45,-4) *++!D{} *\cir<3pt>{}="E"
\ar@{-} "E";(40,-4) \ar@{.} (40,-4);(35,-4)^*!U{}
\ar@{-} (35,-4);(30,-4) *++!D{} *\cir<3pt>{}="F"
\ar@{-} "F";(20,-4) *++!D{} *\cir<3pt>{}
\end{xy} \ar@{-}[rd]^-{\substack{{\rm fixed\ point}\\{\rm Lie\ subalgebra}}} &   \\
B_n\ \begin{xy}
\ar@{-} (50,0) *++!D{} *\cir<3pt>{};
(60,0) *++!D{} *\cir<3pt>{}="C"
\ar@{-} "C";(65,0) \ar@{.} (65,0);(70,0)^*!U{}
\ar@{-} (70,0);(75,0) *++!D{} *\cir<3pt>{}="D"
\ar@{=>} "D";(85,0) *++!D{} *\cir<3pt>{}="E"
\end{xy} \ar@{-}[ru]^-{\substack{{\rm orbit}\\{\rm Lie\ algebra}}} &                            & \ar@{-}[ld]^-{\substack{{\rm orbit}\\{\rm Lie\ algebra}}} C_n\ \begin{xy}
\ar@{-} (50,0) *++!D{} *\cir<3pt>{};
(60,0) *++!D{} *\cir<3pt>{}="C"
\ar@{-} "C";(65,0) \ar@{.} (65,0);(70,0)^*!U{}
\ar@{-} (70,0);(75,0) *++!D{} *\cir<3pt>{}="D"
\ar@{<=} "D";(85,0) *++!D{} *\cir<3pt>{}="E"
\end{xy},\\
                         & \ar@{-}[lu]^-{\substack{{\rm fixed\ point}\\{\rm Lie\ subalgebra}}} D_{n+1}\ \begin{xy}
\ar@{-} (20,0) *++!D{} *\cir<3pt>{};
(30,0) *++!D!R(0.4){} *\cir<3pt>{}="B"
\ar@{-} "B";(35,0) \ar@{.} (35,0);(40,0)^*!U{}
\ar@{-} (40,0);(45,0) *++!D{} *\cir<3pt>{}="C"
\ar@{-} "C";(54,4) *++!L{} *\cir<3pt>{}
\ar@{-} "C";(54,-4) *++!L{} *\cir<3pt>{},
\end{xy}&   
} 
\end{align*}
\begin{align*}
\xymatrix{                   
      & E_6\ \begin{xy}
\ar@{-} (40,0) *++!D{} *\cir<3pt>{};
(30,0) *++!D!R(0.4){} *\cir<3pt>{}="C",
\ar@{-} "C";(21,4) *++!L{} *\cir<3pt>{}="D"
\ar@{-} "C";(21,-4) *++!L{} *\cir<3pt>{}="E"
\ar@{-} "D";(11,4) *++!L{} *\cir<3pt>{}
\ar@{-} "E";(11,-4) *++!L{} *\cir<3pt>{}
\end{xy} \ar@{-}[rd]^-{\substack{{\rm fixed\ point}\\{\rm Lie\ subalgebra}}} &   \\
F_4\ \begin{xy}
\ar@{-} (50,0) *++!D{} *\cir<3pt>{};
(60,0) *++!D{} *\cir<3pt>{}="B"
\ar@{=>} "B";(70,0) *++!D{} *\cir<3pt>{}="C"
\ar@{-} "C";(80,0) *++!D{} *\cir<3pt>{}
\end{xy} \ar@{-}[ru]^-{\substack{{\rm orbit}\\{\rm Lie\ algebra}}} &                            & \ar@{-}[ld]^-{\substack{{\rm orbit}\\{\rm Lie\ algebra}}} F_4\ \begin{xy}
\ar@{-} (50,0) *++!D{} *\cir<3pt>{};
(60,0) *++!D{} *\cir<3pt>{}="B"
\ar@{<=} "B";(70,0) *++!D{} *\cir<3pt>{}="C"
\ar@{-} "C";(80,0) *++!D{} *\cir<3pt>{}
\end{xy},\\
                         & \ar@{-}[lu]^-{\substack{{\rm fixed\ point}\\{\rm Lie\ subalgebra}}} E_6\ \begin{xy}
\ar@{-} (20,0) *++!D{} *\cir<3pt>{};
(30,0) *++!D!R(0.4){} *\cir<3pt>{}="C",
\ar@{-} "C";(39,4) *++!L{} *\cir<3pt>{}="D"
\ar@{-} "C";(39,-4) *++!L{} *\cir<3pt>{}="E",
\ar@{-} "D";(49,4) *++!L{} *\cir<3pt>{}
\ar@{-} "E";(49,-4) *++!L{} *\cir<3pt>{}
\end{xy}&   
} 
\end{align*}
\begin{align*}
\xymatrix{                   
      & D_4\ \begin{xy}
\ar@{-} (21,0) *++!D{} *\cir<3pt>{};
(30,0) *++!D!R(0.4){} *\cir<3pt>{}="C",
\ar@{-} "C";(21,4) *++!L{} *\cir<3pt>{}
\ar@{-} "C";(21,-4) *++!L{} *\cir<3pt>{}
\end{xy} \ar@{-}[rd]^-{\substack{{\rm fixed\ point}\\{\rm Lie\ subalgebra}}} &   \\
G_2\ \begin{xy}
\ar@3{->} *++!D{} *\cir<3pt>{};
(10,0) *++!D{} *\cir<3pt>{}
\end{xy} \ar@{-}[ru]^-{\substack{{\rm orbit}\\{\rm Lie\ algebra}}} &                            & \ar@{-}[ld]^-{\substack{{\rm orbit}\\{\rm Lie\ algebra}}} G_2\ \begin{xy}
\ar@3{<-} *++!D{} *\cir<3pt>{};
(10,0) *++!D{} *\cir<3pt>{}
\end{xy}.\\
                         & \ar@{-}[lu]^-{\substack{{\rm fixed\ point}\\{\rm Lie\ subalgebra}}} D_4\ \begin{xy}
\ar@{-} (29,0) *++!D{} *\cir<3pt>{};
(20,0) *++!D!R(0.4){} *\cir<3pt>{}="C",
\ar@{-} "C";(29,4) *++!L{} *\cir<3pt>{}
\ar@{-} "C";(29,-4) *++!L{} *\cir<3pt>{}
\end{xy}&   
} 
\end{align*}
By this list and Table \ref{table1} in Section 4, we obtain the following.

\vspace{2mm}\begin{prop}
For a simply-laced simple Lie algebra $\mathfrak{g}$ with a Dynkin diagram automorphism $\omega$, there exists a simply-laced simple Lie algebra $\mathfrak{g}^\prime$ with a Dynkin diagram automorphism $\omega^\prime$ such that $((\mathfrak{g}, \omega), (\mathfrak{g}^\prime, \omega^\prime))$ satisfies conditions ${\rm (C)}_1$ and ${\rm (C)}_2$.
\end{prop}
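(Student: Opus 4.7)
The plan is to prove the proposition by a direct case-by-case check, combining the three pair-diagrams displayed immediately above the statement with the classification in Table~\ref{table1} of Section~4 of nontrivial Dynkin diagram automorphisms of simply-laced simple Lie algebras satisfying (O). When $\omega$ is the identity on $I$, one takes $(\mathfrak{g}^\prime, \omega^\prime) = (\mathfrak{g}, \mathrm{id})$, so that (C)$_1$ is immediate and (C)$_2$ holds with $L=1$. Otherwise Table~\ref{table1} forces $(\mathfrak{g}, \omega)$ to lie in one of the four nontrivial families $A_{2n-1}$, $D_{n+1}$ (each with order $2$), $E_6$ (order $2$), or $D_4$ (order $3$), with respective orbit Lie algebras $C_n$, $B_n$, $F_4$, $G_2$.

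For each such family, the three diagrams above read off an explicit matching $(\mathfrak{g}^\prime, \omega^\prime)$: $A_{2n-1}$ is paired with $D_{n+1}$ (and vice versa), $E_6$ with another copy of $E_6$, and $D_4$ with another copy of $D_4$. Condition (C)$_1$ is then transparent from the diagrams: the fixed point Lie subalgebra of $(\mathfrak{g}^\prime, \omega^\prime)$, being the Langlands dual of its own orbit Lie algebra, coincides with the orbit Lie algebra $\breve{\mathfrak{g}}$ of $(\mathfrak{g}, \omega)$ ($C_n = B_n^{\vee}$, $F_4 = F_4^{\vee}$, $G_2 = G_2^{\vee}$, respectively).

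To verify (C)$_2$, I would write out the two vectors of orbit sizes $(m_i)_{i \in \breve{I}}$ and $(m_i^\prime)_{i \in \breve{I}^\prime}$. In each of the three nontrivial pairings the two vectors are reverses of one another: $A_{2n-1}$ gives $(2,\ldots,2,1)$ while $D_{n+1}$ gives $(1,\ldots,1,2)$; the two copies of $E_6$ give the mirror of $(2,2,1,1)$; and the two $D_4$'s give $(3,1)$ and $(1,3)$. The identification $\breve{I} \simeq \breve{I}^\prime$ coming from the isomorphism $\breve{\mathfrak{g}} \cong (\mathfrak{g}^\prime)^{\omega^\prime}$ combined with Langlands duality interchanges long and short simple roots, so it pairs size-$2$ (resp.\ size-$3$) orbits on one side with size-$1$ orbits on the other, yielding the constant products $L = 2, 2, 2, 3$.

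The only subtle point is getting the identification $\breve{I} \simeq \breve{I}^\prime$ right; naively matching the left-to-right labels of the two copies of $F_4$ (or $G_2$) in the relevant diagram does not give a constant $L$, and one must instead use the Langlands-dual identification that swaps short and long simple roots. Once this is carried out, condition (C)$_2$ reduces to a short finite check in each of the four nontrivial cases, completing the proof.
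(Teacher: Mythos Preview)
Your proposal is correct and follows essentially the same approach as the paper, which simply states that the proposition follows ``by this list and Table~\ref{table1} in Section~4'' without further elaboration. You have usefully spelled out the verification of ${\rm (C)}_2$ in each case, including the observation that the identification $\breve{I} \simeq \breve{I}^\prime$ must be the Langlands-dual one swapping long and short simple roots; this is implicit in the paper's diagrams (where the two copies of $F_4$ and $G_2$ are drawn with oppositely oriented arrows) but is worth making explicit.
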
\vspace{2mm}

For simplicity, we consider only the pair $(A_{2n-1}, D_{n+1})$; we note that all the arguments below carry over to the other pairs. Denote the Weyl group of type $A_{2n-1}$ by $W^{A_{2n-1}}$, the Schubert variety of type $A_{2n-1}$ by $X^{A_{2n-1}}(w)$, and so on. We identify $\breve{I} \coloneqq \{1, \ldots, n\}$ with the set of vertices of the Dynkin diagram of type $B_n$, and also with that of type $C_n$ as follows:
\begin{align*}
B_n\ \begin{xy}
\ar@{-} (50,0) *++!D{1} *\cir<3pt>{};
(60,0) *++!D{2} *\cir<3pt>{}="C"
\ar@{-} "C";(65,0) \ar@{.} (65,0);(70,0)^*!U{}
\ar@{-} (70,0);(75,0) *++!D{n-1} *\cir<3pt>{}="D"
\ar@{=>} "D";(85,0) *++!D{n} *\cir<3pt>{}="E"
\end{xy}\hspace{-1mm},\\ C_n\ \begin{xy}
\ar@{-} (50,0) *++!D{1} *\cir<3pt>{};
(60,0) *++!D{2} *\cir<3pt>{}="C"
\ar@{-} "C";(65,0) \ar@{.} (65,0);(70,0)^*!U{}
\ar@{-} (70,0);(75,0) *++!D{n-1} *\cir<3pt>{}="D"
\ar@{<=} "D";(85,0) *++!D{n} *\cir<3pt>{}="E"
\end{xy}\hspace{-1mm}.
\end{align*}
Note that the Weyl group $W^{B_n}$ is isomorphic to the Weyl group $W^{C_n}$. As we have seen in Section 4, the Weyl group $W^{B_n}\ (\simeq W^{C_n})$ is regarded as a specific subgroup of $W^{A_{2n-1}}$ (resp., of $W^{D_{n+1}}$); let $\Theta \colon W^{B_n} \hookrightarrow W^{A_{2n-1}}$ (resp., $\Theta' \colon W^{B_n} \hookrightarrow W^{D_{n+1}}$) be the inclusion map. Take a reduced word ${\bf i} = (i_1, \ldots, i_r) \in \breve{I}^r$ for $w \in W^{B_n} \simeq W^{C_n}$. The reduced word ${\bf i}$ induces a reduced word $\Theta({\bf i})$ (resp., $\Theta'({\bf i})$) for $\Theta(w)$ (resp., for $\Theta'(w)$); see Section 4. By Corollary \ref{Key corollary 1} and Theorem \ref{Key proposition 1}, we obtain the following diagrams; we denote the map $\Omega_{\bf i} \colon \Phi_{\Theta({\bf i})} (\mathcal{B}^{A_{2n-1}} _{\Theta(w)}(\infty)) \rightarrow \Phi_{\bf i} (\mathcal{B}^{C_n} _w(\infty))$ by $\Omega_{\bf i} ^{A, C}$, the map $\Upsilon_{\bf i} \colon \Phi_{\bf i} (\mathcal{B}^{B_n} _w(\infty)) \rightarrow \Phi_{\Theta({\bf i})} (\mathcal{B}^{A_{2n-1}} _{\Theta(w)}(\infty))$ by $\Upsilon_{\bf i} ^{B, A}$, and so on.
\begin{align*}
\xymatrix{                   
      & \Phi_{\Theta({\bf i})} (\mathcal{B}^{A_{2n-1}} _{\Theta(w)}(\infty)) \ar@{->}[rd]^-{\Omega_{\bf i} ^{A, C}} &   \\
\Phi_{\bf i} (\mathcal{B}^{B_n} _w(\infty)) \ar@{^{(}->}[ru]^-{\Upsilon_{\bf i} ^{B, A}} &                            & \ar@{^{(}->}[ld]^-{\Upsilon_{\bf i} ^{C, D}} \Phi_{\bf i} (\mathcal{B}^{C_n} _w(\infty)),\\
                         & \ar@{->}[lu]^-{\Omega_{\bf i} ^{D, B}} \Phi_{\Theta'({\bf i})} (\mathcal{B}^{D_{n+1}} _{\Theta'(w)}(\infty))&   
} 
\end{align*}
\begin{align*}
\xymatrix{                   
      & \Psi_{\Theta({\bf i})} (\mathcal{B}^{A_{2n-1}} _{\Theta(w)}(\infty)) \ar@{->}[rd]^-{\Omega_{\bf i} ^{A, C}} &   \\
\Psi_{\bf i} (\mathcal{B}^{B_n} _w(\infty)) \ar@{^{(}->}[ru]^-{\Upsilon_{\bf i} ^{B, A}} &                            & \ar@{^{(}->}[ld]^-{\Upsilon_{\bf i} ^{C, D}} \Psi_{\bf i} (\mathcal{B}^{C_n} _w(\infty)).\\
                         & \ar@{->}[lu]^-{\Omega_{\bf i} ^{D, B}} \Psi_{\Theta'({\bf i})} (\mathcal{B}^{D_{n+1}} _{\Theta'(w)}(\infty))&   
} 
\end{align*}

\begin{proof}[Proof of Theorem \ref{surjectivity}]
We give a proof of the assertion only for the map \[\Omega_{\bf i} ^{A, C} \colon \Delta(X^{A_{2n-1}} (\Theta(w)), \mathcal{L}_{\lambda}, v_{\Theta({\bf i})}, \tau_{\lambda}) \rightarrow \Delta(X^{C_n}(w), \mathcal{L}_{\hat{\lambda}}, v_{\bf i}, \tau_{\hat{\lambda}});\] the proofs for the other cases are similar. Because
\begin{align*}
&\Delta(X^{A_{2n-1}}(\Theta(w)), \mathcal{L}_{2\lambda}, v_{\Theta({\bf i})}, \tau_{2\lambda}) = 2\Delta(X^{A_{2n-1}}(\Theta(w)), \mathcal{L}_{\lambda}, v_{\Theta({\bf i})}, \tau_{\lambda})\ {\rm and}\\
&\Delta(X^{C_n}(w), \mathcal{L}_{2\hat{\lambda}}, v_{\bf i}, \tau_{2\hat{\lambda}}) = 2\Delta(X^{C_n}(w), \mathcal{L}_{\hat{\lambda}}, v_{\bf i}, \tau_{\hat{\lambda}}), 
\end{align*}
it suffices to prove that the map 
\begin{equation}\label{goal of surjectivity}
\begin{aligned}
\Omega_{\bf i} ^{A, C} \colon \Delta(X^{A_{2n-1}}(\Theta(w)), \mathcal{L}_{2\lambda}, v_{\Theta({\bf i})}, \tau_{2\lambda}) \rightarrow \Delta(X^{C_n}(w), \mathcal{L}_{2\hat{\lambda}}, v_{\bf i}, \tau_{2\hat{\lambda}}) 
\end{aligned}
\end{equation}
is surjective. By the definitions of $\Omega_{\bf i}$ and $\Upsilon_{\bf i}$, we see that $\Omega_{\bf i} ^{A, C} \circ \Upsilon_{\bf i} ^{B, A} (a_1, \ldots, a_r) = (a_1 ^\prime, \ldots, a_r ^\prime)$ and $\Omega_{\bf i} ^{D, B} \circ \Upsilon_{\bf i} ^{C, D} (a_1, \ldots, a_r) = (a'' _1, \ldots, a'' _r)$ for $(a_1, \ldots, a_r) \in \r^r$, where 
\begin{equation}\label{explicit description of composite}
\begin{aligned}
&a_k ^\prime \coloneqq \begin{cases}
2a_k\quad(i_k = 1, \ldots, n-1),\\
a_k\quad\ (i_k = n),
\end{cases}\\
&a'' _k \coloneqq \begin{cases}
a_k\quad\ (i_k = 1, \ldots, n-1),\\
2a_k\quad(i_k = n)
\end{cases}
\end{aligned}
\end{equation}
for $k = 1, \ldots, r$. From these, it follows that the composite map $\Omega_{\bf i} ^{A, C} \circ \Upsilon_{\bf i} ^{B, A} \circ \Omega_{\bf i} ^{D, B} \circ \Upsilon_{\bf i} ^{C, D}$ is identical to $2 \cdot {\rm id}_{\r^r}$. This implies that the map \[\Omega_{\bf i} ^{A, C} \circ \Upsilon_{\bf i} ^{B, A} \circ \Omega_{\bf i} ^{D, B} \circ \Upsilon_{\bf i} ^{C, D} \colon \Delta(X^{C_n} (w), \mathcal{L}_{\hat{\lambda}}, v_{\bf i}, \tau_{\hat{\lambda}}) \rightarrow \Delta(X^{C_n} (w), \mathcal{L}_{2\hat{\lambda}}, v_{\bf i}, \tau_{2\hat{\lambda}})\] doubles each of the coordinates, and hence is surjective. Therefore, the map \eqref{goal of surjectivity} is also surjective. This proves the theorem.
\end{proof}

\begin{ex}\normalfont
Consider the case $n = 2$:
\begin{align*}
\xymatrix{                   
      & A_3\ \begin{xy}
\ar@{-} (45,4) *++!D{1} *\cir<3pt>{};(54,0) *++!D{2} *\cir<3pt>{}="D"
\ar@{-} "D";(45,-4) *++!U{3} *\cir<3pt>{}="E"
\end{xy} \ar@{-}[rd]^-{\substack{{\rm fixed\ point}\\{\rm Lie\ subalgebra}}} &   \\
B_2\ \begin{xy}
\ar@{=>} (75,0) *++!D{1} *\cir<3pt>{}="C";(85,0) *++!D{2} *\cir<3pt>{}
\end{xy} \ar@{-}[ru]^-{\substack{{\rm orbit}\\{\rm Lie\ algebra}}} &                            & \ar@{-}[ld]^-{\substack{{\rm orbit}\\{\rm Lie\ algebra}}} C_2\ \begin{xy}
\ar@{<=} (75,0) *++!D{1} *\cir<3pt>{}="C";(85,0) *++!D{2} *\cir<3pt>{}
\end{xy}.\\
                         & \ar@{-}[lu]^-{\substack{{\rm fixed\ point}\\{\rm Lie\ subalgebra}}} D_3\ \begin{xy}
\ar@{-} (45,0) *++!D{1} *\cir<3pt>{}="C";(54,4) *++!D{2} *\cir<3pt>{}
\ar@{-} "C";(54,-4) *++!U{3} *\cir<3pt>{},
\end{xy}&   
} 
\end{align*}
Set ${\bf i} \coloneqq (1, 2, 1) \in \breve{I}^3$; this is a reduced word for $w \coloneqq s_1 s_2 s_1 \in W^{B_2} \simeq W^{C_2}$. By the definitions of $\Theta$ and $\Theta'$, we have $\Theta({\bf i}) = (1, 3, 2, 1, 3)$ and $\Theta'({\bf i}) = (1, 2, 3, 1)$. Then, it follows from \cite[Sect.\ 1]{Lit} that 
\begin{align*}
&\Phi_{\Theta({\bf i})}(\mathcal{B}_{\Theta(w)} ^{A_3}(\infty)) = \{(a_1, \ldots, a_5) \in \z_{\ge 0} ^5 \mid a_3 \ge a_4,\ a_3 \ge a_5\},\\ 
&\Phi_{\bf i}(\mathcal{B}_w ^{B_2}(\infty)) = \{(a_1, a_2, a_3) \in \z_{\ge 0} ^3 \mid a_2 \ge a_3\},\\
&\Phi_{\bf i}(\mathcal{B}_w ^{C_2}(\infty)) = \{(a_1, a_2, a_3) \in \z_{\ge 0} ^3 \mid 2 a_2 \ge a_3\},\\ 
&\Phi_{\Theta'({\bf i})}(\mathcal{B}_{\Theta'(w)} ^{D_3}(\infty)) = \{(a_1, \ldots, a_4) \in \z_{\ge 0} ^4 \mid a_2 + a_3 \ge a_4\}.
\end{align*}
In addition, the maps $\Omega_{\bf i} ^{A, C} \colon \r^5 \twoheadrightarrow \r^3$, $\Upsilon_{\bf i} ^{B, A} \colon \r^3 \hookrightarrow \r^5$, $\Omega_{\bf i} ^{D, B} \colon \r^4 \twoheadrightarrow \r^3$, and $\Upsilon_{\bf i} ^{C, D} \colon \r^3 \hookrightarrow \r^4$ are given by
\begin{align*}
&\Omega_{\bf i} ^{A, C}(a_1, \ldots, a_5) \coloneqq (a_1 + a_2, a_3, a_4 + a_5),\ \Upsilon_{\bf i} ^{B, A}(a_1, a_2, a_3) \coloneqq (a_1, a_1, a_2, a_3, a_3),\\ 
&\Omega_{\bf i} ^{D, B}(a_1, \ldots, a_4) \coloneqq (a_1, a_2 + a_3, a_4),\ \Upsilon_{\bf i} ^{C, D}(a_1, a_2, a_3) \coloneqq (a_1, a_2, a_2, a_3).
\end{align*}
Through the map $\Omega_{\bf i} ^{A, C}$, the conditions $a_3 \ge a_4,\ a_3 \ge a_5$ for $\Phi_{\Theta({\bf i})}(\mathcal{B}_{\Theta(w)} ^{A_3}(\infty))$ correspond to the condition $2 a_2 \ge a_3$ for $\Phi_{\bf i}(\mathcal{B}_w ^{C_2}(\infty))$; hence we see that $\Omega_{\bf i} ^{A, C}(\Phi_{\Theta({\bf i})}(\mathcal{B}_{\Theta(w)} ^{A_3}(\infty))) = \Phi_{\bf i}(\mathcal{B}_w ^{C_2}(\infty))$. Similarly, we observe that the following equalities hold: 
\begin{align*}
&\Omega_{\bf i} ^{D, B}(\Phi_{\Theta'({\bf i})}(\mathcal{B}_{\Theta'(w)} ^{D_3}(\infty))) = \Phi_{\bf i}(\mathcal{B}_w ^{B_2}(\infty)),\\
&\Upsilon_{\bf i} ^{B, A}(\Phi_{\bf i}(\mathcal{B}_w ^{B_2}(\infty))) = \{(a_1, \ldots, a_5) \in \Phi_{\Theta({\bf i})}(\mathcal{B}_{\Theta(w)} ^{A_3}(\infty)) \mid a_1 = a_2,\ a_4 = a_5\},\\
&\Upsilon_{\bf i} ^{C, D}(\Phi_{\bf i}(\mathcal{B}_w ^{C_2}(\infty))) = \{(a_1, \ldots, a_4) \in \Phi_{\Theta'({\bf i})}(\mathcal{B}_{\Theta'(w)} ^{D_3}(\infty)) \mid a_2 = a_3\}.
\end{align*}
Take $\lambda \in P_+ ^{A_3} \cap (\mathfrak{t}^\ast)^0$ and set $\lambda_i \coloneqq \langle\lambda, h_i ^{A_3}\rangle$ for $i = 1, 2, 3$. The condition $\lambda \in (\mathfrak{t}^\ast)^0$ implies that $\lambda_1 = \lambda_3$. By the definition of $\hat{\lambda}$, it follows that $\langle\hat{\lambda}, h_1 ^{C_2}\rangle = 2 \lambda_1 = 2 \lambda_3$ and $\langle\hat{\lambda}, h_2 ^{C_2}\rangle = \lambda_2$. Therefore, we see from Proposition \ref{string polytopes} (2) and \cite[Sect.\ 1]{Lit} that $-\Delta(X^{A_{2n-1}} (\Theta(w)), \mathcal{L}_{\lambda}, v_{\Theta({\bf i})}, \tau_{\lambda})$ (resp., $-\Delta(X^{C_n}(w), \mathcal{L}_{\hat{\lambda}}, v_{\bf i}, \tau_{\hat{\lambda}})$) is given by the following conditions:
\begin{align*}
&(a_1, \ldots, a_5) \in \r_{\ge 0} ^5,\ a_3 \ge a_4,\ a_3 \ge a_5,\ a_5 \le \lambda_1,\ a_4 \le \lambda_1,\\ 
&a_3 \le \lambda_2 + a_4 + a_5,\ a_2 \le \lambda_1 + a_3 - 2a_5,\ a_1 \le \lambda_1 + a_3 -2a_4\\
({\rm resp.},\ &(a_1, a_2, a_3) \in \r_{\ge 0} ^3,\ 2 a_2 \ge a_3,\ a_3 \le 2 \lambda_1,\ a_2 \le \lambda_2 + a_3,\ a_1 \le 2 \lambda_1 + 2a_2 -2a_3).
\end{align*}
Hence it follows that \[\Omega_{\bf i} ^{A, C}(\Delta(X^{A_{2n-1}} (\Theta(w)), \mathcal{L}_{\lambda}, v_{\Theta({\bf i})}, \tau_{\lambda})) = \Delta(X^{C_n}(w), \mathcal{L}_{\hat{\lambda}}, v_{\bf i}, \tau_{\hat{\lambda}}).\]
\end{ex}\vspace{2mm}

\section{Relation with similarity of crystal bases}

In this section, we study the relation of the folding procedure discussed in Sections 4, 5 with a similarity of crystal bases. 

First we review (a variant of) a similarity property of crystal bases, following \cite[Sect.\ 5]{Kas5}. Let $\mathfrak{g}, I, P, \{\alpha_i, h_i \mid i \in I\}$ be as in Section 2, and take $m_i \in \z_{>0}$ for every $i \in I$. We set $\tilde{\alpha}_i \coloneqq m_i \alpha_i$, $\tilde{h}_i \coloneqq \frac{1}{m_i} h_i$ for $i \in I$, and denote by $\widetilde{P} \subset P$ the set of those $\lambda \in P$ such that $\langle\lambda, \tilde{h}_i\rangle \in \mathbb{Z}$ for all $i \in I$. We impose the following condition on $\{m_i \mid i \in I\}$: 
\[\tilde{\alpha}_i \in \widetilde{P}\ {\rm for\ all}\ i \in I.\]
Then, it is easily seen that the matrix $(\langle \tilde{\alpha}_j, \tilde{h}_i\rangle)_{i, j \in I}$ is an indecomposable Cartan matrix of finite type. Let $\mathfrak{g}^\prime$ be the corresponding simple Lie algebra. Note that the set $\widetilde{P}$ is identified with the weight lattice for $\mathfrak{g}^\prime$. Let us write $\mathcal{B}(\infty)$ for $\mathfrak{g}$ as $\mathcal{B}^{\mathfrak{g}}(\infty)$, $\mathcal{B}(\lambda)$ for ${\mathfrak g}$ as $\mathcal{B}^{\mathfrak{g}}(\lambda)$, and so on.

\vspace{2mm}\begin{prop}[{see the proof of \cite[Theorem 5.1]{Kas5}}]\label{similarity existence}
There exists a unique map $S_\infty \colon \mathcal{B}^{\mathfrak{g}^\prime}(\infty) \rightarrow \mathcal{B}^{\mathfrak{g}}(\infty)$ satisfying the following conditions$:$
\begin{enumerate}
\item[{\rm (i)}] $S_\infty (b_\infty ^{\mathfrak{g}^\prime}) = b_\infty ^{\mathfrak{g}}$,
\item[{\rm (ii)}] $S_\infty (\tilde{X}_i b) = \tilde{X}_i ^{m_i} S_\infty (b)$ for all $i \in I$, $b \in \mathcal{B}^{\mathfrak{g}^\prime}(\infty)$, and $X \in \{e, f\}$, where $S_\infty(0) \coloneqq 0$.
\end{enumerate}
\end{prop}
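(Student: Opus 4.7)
Uniqueness is immediate: $\mathcal{B}^{\mathfrak{g}^\prime}(\infty)$ is generated as a crystal from $b_\infty^{\mathfrak{g}^\prime}$ by iterated application of the lowering operators $\tilde{f}_i$, so any $b$ admits (at least one) expression $b = \tilde{f}_{i_1} \cdots \tilde{f}_{i_r} b_\infty^{\mathfrak{g}^\prime}$, and conditions (i) and (ii) force
\[S_\infty(b) = \tilde{f}_{i_1}^{m_{i_1}} \cdots \tilde{f}_{i_r}^{m_{i_r}} b_\infty^{\mathfrak{g}}.\]
The real work is to show that this prescription is independent of the chosen expression, and that the resulting map then actually satisfies (ii) for both $X = e$ and $X = f$.

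My plan is to proceed by induction on $-\mathrm{wt}(b) \in \sum_{i \in I} \mathbb{Z}_{\ge 0}\,\alpha_i^{\mathfrak{g}^\prime}$, establishing simultaneously the auxiliary compatibility
\[\varepsilon_i^{\mathfrak{g}}(S_\infty(b)) = m_i \cdot \varepsilon_i^{\mathfrak{g}^\prime}(b) \qquad (i \in I).\]
Once this $\varepsilon$-identity is available, condition (ii) for $X = e$ is automatic, since $\varepsilon_i^{\mathfrak{g}}(S_\infty(b))$ is a multiple of $m_i$ and hence $\tilde{e}_i^{m_i}$ acts nontrivially on $S_\infty(b)$ exactly when $\tilde{e}_i$ acts nontrivially on $b$; condition (ii) for $X = f$ then follows by applying the $X = e$ case to $\tilde{f}_i b$ and using that both crystals are semi-normal in a sufficiently strong sense.

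The main obstacle is proving well-definedness together with the $\varepsilon$-identity, because a priori two distinct expressions for $b$ on the $\mathfrak{g}^\prime$-side could produce different elements after the $m_i$-fold rescaling on the $\mathfrak{g}$-side. The route I would follow is to first construct, for every dominant integral weight $\lambda$ for $\mathfrak{g}^\prime$ (viewed as an element of $\widetilde{P} \subset P$), an analogous similarity map $S_\lambda \colon \mathcal{B}^{\mathfrak{g}^\prime}(\lambda) \to \mathcal{B}^{\mathfrak{g}}(\lambda)$ at the level of integrable highest weight crystals. A convenient approach is to compare the string parametrizations: with respect to a common reduced word ${\bf i}$ for the longest element, the defining linear inequalities for $\Phi_{\bf i}^{\mathfrak{g}^\prime}(\mathcal{B}^{\mathfrak{g}^\prime}(\lambda))$ and $\Phi_{\bf i}^{\mathfrak{g}}(\mathcal{B}^{\mathfrak{g}}(\lambda))$ rescale coherently under $(a_1, \ldots, a_r) \mapsto (m_{i_1} a_1, \ldots, m_{i_r} a_r)$, yielding a well-defined set-theoretic map whose independence of the choice of ${\bf i}$ is inherited from that of the string parametrizations themselves. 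The desired $S_\infty$ is then assembled by passing to the projective limit of the $S_\lambda$'s along the canonical injections $\widetilde{\mathcal{B}}(\lambda) \hookrightarrow \mathcal{B}(\infty)$; conditions (i) and (ii) transfer directly, and the $\varepsilon$-identity at the $\mathcal{B}(\infty)$-level drops out from its $\mathcal{B}(\lambda)$-analogue via Proposition 2.3(4).
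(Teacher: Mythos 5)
Your uniqueness argument is fine, and the deduction of the $X=f$ case of (ii) from the $X=e$ case can be made to work --- though not via semi-normality, since $\mathcal{B}(\infty)$ is not semi-normal ($\tilde{f}_i$ never annihilates); the correct repair is to use the identity $\varepsilon_i(S_\infty(\tilde{f}_i b))=m_i\,\varepsilon_i(\tilde{f}_i b)\ge m_i$ together with the fact that $\tilde{f}_i^{m_i}\tilde{e}_i^{m_i}x=x$ whenever $\varepsilon_i(x)\ge m_i$. The genuine gap is in the existence step. The two claims carrying all the weight --- that the defining inequalities of the string polytopes for $\mathfrak{g}$ and $\mathfrak{g}'$ correspond under $(a_1,\ldots,a_r)\mapsto(m_{i_1}a_1,\ldots,m_{i_r}a_r)$ for a common reduced word, and that the resulting identification is independent of the reduced word (equivalently, that the piecewise-linear transition maps between string parametrizations, including those attached to rank-two braid moves of type $B_2$ and $G_2$, commute with this rescaling) --- are precisely the substance of the proposition, and you assert them rather than prove them. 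Explicit inequality systems are available in Littelmann's simple form only for special (``nice'') reduced words, and in general only through the $\mathbf{i}$-trail description of \cite{BZ}; verifying the rescaling there, and verifying word-independence, is real work that the sketch does not do. Note moreover that in this very paper the string-coordinate rescaling formula is \emph{deduced from} the similarity map (Proposition \ref{p:last proposition}, via the $\varepsilon$-identities), so importing such statements from the literature risks circularity. Finally, the announced induction on $-\mathrm{wt}(b)$ establishing $\varepsilon_i(S_\infty(b))=m_i\varepsilon_i(b)$ is never carried out, and the compatibility of the maps $S_\lambda$ with the inclusions $\widetilde{\mathcal{B}}(\lambda)\hookrightarrow\mathcal{B}(\infty)$, needed for the passage to the limit, is also left unverified.

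For comparison: the paper offers no proof of Proposition \ref{similarity existence} at all; it defers to the proof of \cite[Theorem 5.1]{Kas5}, which is purely crystal-theoretic (essentially a reduction to elementary crystals $B_i$, where the rescaling $b_i(n)\mapsto b_i(m_in)$ obviously has the required property, combined with strict embeddings and the tensor-product rule) and makes no use of string polytopes. If you want to keep your route, the cleanest repair is either to actually carry out the weight induction proving the $\varepsilon$-identity directly (reducing to rank-two verifications and using Kashiwara's involution for the $\varepsilon_i(\,\cdot\,^\ast)$ counterpart), or to follow Kashiwara's embedding argument as the paper intends.
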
\vspace{2mm}

If $\mathfrak{g}$ is of type $B_n$ and $(m_1, \ldots, m_{n-1}, m_n) = (1, \ldots, 1, 2)$, then $\mathfrak{g}^\prime$ is the simple Lie algebra of type $C_n$. Conversely, if $\mathfrak{g}$ is of type $C_n$ and $(m_1, \ldots, m_{n-1}, m_n) = (2, \ldots, 2, 1)$, then $\mathfrak{g}^\prime$ is the simple Lie algebra of type $B_n$. Hence we obtain the following.

\vspace{2mm}\begin{cor}\label{similarity existence B,C}
The following hold.
\begin{enumerate}
\item[{\rm (1)}] There exists a unique map $S_\infty ^{B, C} \colon \mathcal{B}^{B_n}(\infty) \rightarrow \mathcal{B}^{C_n}(\infty)$ satisfying the following conditions$:$
\begin{enumerate}
\item[{\rm (i)}] $S_\infty ^{B, C}(b_\infty ^{B_n}) = b_\infty ^{C_n}$,
\item[{\rm (ii)}] $S_\infty ^{B, C}(\tilde{X}_i b) = \begin{cases}
\widetilde{X}_i ^2 S_\infty ^{B, C}(b)\quad(i = 1, \ldots, n-1),\\
\widetilde{X}_n S_\infty ^{B, C}(b)\quad(i = n)
\end{cases}$\\
for all $b \in \mathcal{B}^{B_n}(\infty)$ and $X \in \{e, f\}$, where $S_\infty ^{B, C}(0) \coloneqq 0$.
\end{enumerate}
\item[{\rm (2)}] There exists a unique map $S_\infty ^{C, B} \colon \mathcal{B}^{C_n}(\infty) \rightarrow \mathcal{B}^{B_n}(\infty)$ satisfying the following conditions$:$
\begin{enumerate}
\item[{\rm (i)}] $S_\infty ^{C, B}(b_\infty ^{C_n}) = b_\infty ^{B_n}$,
\item[{\rm (ii)}] $S_\infty ^{C, B}(\tilde{X}_{i}b) = \begin{cases}
\widetilde{X}_i S_\infty ^{C, B}(b)\quad(i = 1, \ldots, n-1),\\
\widetilde{X}_n ^2 S_\infty ^{C, B}(b)\quad(i = n)
\end{cases}$\\
for all $b \in \mathcal{B}^{C_n}(\infty)$ and $X \in \{e, f\}$, where $S_\infty ^{C, B}(0) \coloneqq 0$.
\end{enumerate}
\end{enumerate}
\end{cor}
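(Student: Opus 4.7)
The plan is to deduce both assertions as immediate instances of Proposition \ref{similarity existence}; no new idea is required, only choosing the parameters $\{m_i\}$ correctly in each of the two directions and checking the hypotheses.

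For part (1), I would apply Proposition \ref{similarity existence} with the Lie algebra $\mathfrak{g}$ in the statement of that proposition taken to be $C_n$ (so that the target crystal $\mathcal{B}^{\mathfrak{g}}(\infty)$ is $\mathcal{B}^{C_n}(\infty)$), and with the multiplicities
\[m_1=m_2=\cdots=m_{n-1}=2,\qquad m_n=1.\]
The only point to verify is that the associated Cartan matrix $\bigl(\langle \tilde{\alpha}_j,\tilde{h}_i\rangle\bigr)_{i,j\in I}=\bigl(\tfrac{m_j}{m_i}c_{ij}^{C_n}\bigr)_{i,j}$ is that of $B_n$, and that $\tilde{\alpha}_i\in\widetilde P$ for every $i$. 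A direct computation using the Cartan matrix of $C_n$ (which has $c_{n-1,n}^{C_n}=-2$ and $c_{n,n-1}^{C_n}=-1$, and agrees with the $A_{n-1}$ matrix in the upper left block) shows that the only non-obvious off-diagonal entries become $\tfrac{m_n}{m_{n-1}}\cdot(-2)=-1$ and $\tfrac{m_{n-1}}{m_n}\cdot(-1)=-2$, which is precisely the $B_n$ Cartan matrix; and all the ratios $m_j/m_i$ that appear in nonzero entries of $C^{C_n}$ are either $1$, $1/2$ (multiplying $-2$) or $2$ (multiplying $-1$), so all entries are integers, i.e.\ $\tilde{\alpha}_i\in\widetilde P$. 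Proposition \ref{similarity existence} then yields the unique map $S_\infty\colon\mathcal{B}^{B_n}(\infty)\to\mathcal{B}^{C_n}(\infty)$ with $S_\infty(b_\infty^{B_n})=b_\infty^{C_n}$ and $S_\infty(\tilde X_i b)=\tilde X_i^{m_i}S_\infty(b)$, which is exactly the map $S_\infty^{B,C}$ in the statement.

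For part (2), I would repeat the argument with the roles swapped: take $\mathfrak{g}=B_n$ and
\[m_1=m_2=\cdots=m_{n-1}=1,\qquad m_n=2.\]
Using $c_{n-1,n}^{B_n}=-1$ and $c_{n,n-1}^{B_n}=-2$, the rescaled entries $\tfrac{m_n}{m_{n-1}}\cdot(-1)=-2$ and $\tfrac{m_{n-1}}{m_n}\cdot(-2)=-1$ reproduce the Cartan matrix of $C_n$, and again all scaled entries are integral so the condition $\tilde{\alpha}_i\in\widetilde P$ holds. Proposition \ref{similarity existence} then gives the desired $S_\infty^{C,B}\colon\mathcal{B}^{C_n}(\infty)\to\mathcal{B}^{B_n}(\infty)$ with the stated compatibilities, again uniquely.

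There is really no substantive obstacle here; the entire content of the corollary is a bookkeeping check that the Cartan matrices of $B_n$ and $C_n$ are related by the rescaling prescribed by the two choices of $(m_1,\ldots,m_n)$ above, and that the integrality hypothesis $\tilde{\alpha}_i\in\widetilde P$ is satisfied. The existence and uniqueness of the maps are then quoted directly from Proposition \ref{similarity existence}, and matching $m_i$ against the exponents appearing in condition (ii) of each part of the corollary is immediate.
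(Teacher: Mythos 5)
Your proposal is correct and follows exactly the paper's route: the corollary is obtained by applying Proposition \ref{similarity existence} with $\mathfrak{g}=C_n$, $(m_1,\ldots,m_n)=(2,\ldots,2,1)$ (so $\mathfrak{g}'=B_n$, giving $S_\infty^{B,C}$) and with $\mathfrak{g}=B_n$, $(m_1,\ldots,m_n)=(1,\ldots,1,2)$ (so $\mathfrak{g}'=C_n$, giving $S_\infty^{C,B}$), which is precisely what the paper states immediately before the corollary. Your explicit check of the rescaled Cartan-matrix entries and of the integrality condition $\tilde{\alpha}_i\in\widetilde{P}$ just spells out the bookkeeping the paper leaves implicit.
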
\vspace{2mm}

It is easily seen that the composite map $S_\infty ^{C, B} \circ S_\infty ^{B, C}$ is identical to the map $S_2 ^{B} \colon \mathcal{B}^{B_n}(\infty) \rightarrow \mathcal{B}^{B_n}(\infty)$ given by the following conditions:
\begin{enumerate}
\item[{\rm (i)}] $S_2 ^{B}(b_\infty ^{B_n}) = b_\infty ^{B_n}$,
\item[{\rm (ii)}] $S_2 ^{B}(\tilde{X}_i b) = \tilde{X}_i ^2 S_2 ^{B}(b)$ for all $i \in I$, $b \in \mathcal{B}^{B_n}(\infty)$, and $X \in \{e, f\}$, where $S_2 ^B(0) \coloneqq 0$,
\item[{\rm (iii)}] $\varepsilon_i(S_2 ^{B}(b)) = 2 \varepsilon_i(b)$ and $\varphi_i(S_2 ^{B}(b)) = 2 \varphi_i(b)$ for all $i \in I$ and $b \in \mathcal{B}^{B_n}(\infty);$
\end{enumerate}
see also \cite[Theorem 3.1]{Kas5}. Similar result holds for the composite map $S_\infty ^{B, C} \circ S_\infty ^{C, B} \colon \mathcal{B}^{C_n}(\infty) \rightarrow \mathcal{B}^{C_n}(\infty)$. Recall that the Weyl group of type $B_n$ is isomorphic to that of type $C_n$. By conditions (i) and (ii) in Corollary \ref{similarity existence B,C} (1) (resp., (2)), it follows that \[S_\infty ^{B, C}(\mathcal{B}_w ^{B_n}(\infty)) \subset \mathcal{B}_w ^{C_n}(\infty)\ {\rm (resp.,}\ S_\infty ^{C, B}(\mathcal{B}_w ^{C_n}(\infty)) \subset \mathcal{B}_w ^{B_n}(\infty))\] for all $w \in W^{B_n} \simeq W^{C_n}$.

\vspace{2mm}\begin{prop}\label{p:last proposition}
Let ${\bf i} = (i_1, \ldots, i_r) \in I^r$ be a reduced word for $w \in W^{B_n} \simeq W^{C_n}$. Then the following equalities hold for all $b \in \mathcal{B}^{B_n} _w(\infty)$ and $b^\prime \in \mathcal{B}^{C_n}_w(\infty)$$:$
\begin{align*}
&\Phi_{\bf i}(S_\infty ^{B, C}(b)) = \Omega_{\bf i} ^{A, C} \circ \Upsilon_{\bf i} ^{B, A}(\Phi_{\bf i}(b)),\ \Phi_{\bf i}(S_\infty ^{C, B}(b^\prime)) = \Omega_{\bf i} ^{D, B} \circ \Upsilon_{\bf i} ^{C, D}(\Phi_{\bf i} (b^\prime)),\ {\it and}\\
&\Psi_{\bf i}(S_\infty ^{B, C}(b)) = \Omega_{\bf i} ^{A, C} \circ \Upsilon_{\bf i} ^{B, A} (\Psi_{\bf i} (b)),\ \Psi_{\bf i}(S_\infty ^{C, B}(b^\prime)) = \Omega_{\bf i} ^{D, B} \circ \Upsilon_{\bf i} ^{C, D} (\Psi_{\bf i} (b^\prime)).
\end{align*}
\end{prop}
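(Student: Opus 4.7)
The plan is to directly compute $\Phi_{\bf i}(S_\infty^{B,C}(b))$ from the defining property of the similarity in Corollary \ref{similarity existence B,C}, and then match the result against the explicit formula \eqref{explicit description of composite} for $\Omega_{\bf i}^{A,C} \circ \Upsilon_{\bf i}^{B,A}$; the argument for $S_\infty^{C,B}$ is entirely parallel, and the $\Psi_{\bf i}$ identities will be deduced from the $\Phi_{\bf i}$ ones via Kashiwara's involution.

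The key input I would use is the equality $\varepsilon_i(S_\infty^{B,C}(b)) = m_i\, \varepsilon_i(b)$ for all $b \in \mathcal{B}^{B_n}(\infty)$ and $i \in I$, where $m_i = 2$ for $i \neq n$ and $m_n = 1$. Iterating condition (ii) of Corollary \ref{similarity existence B,C} (1) gives $S_\infty^{B,C}(\tilde{e}_i^k b) = \tilde{e}_i^{m_i k} S_\infty^{B,C}(b)$ for all $k \ge 0$; since $S_\infty^{B,C}$ takes values in $\mathcal{B}^{C_n}(\infty)$ and hence never hits $0$, setting $k = \varepsilon_i(b)$ and $k = \varepsilon_i(b)+1$ confines $\varepsilon_i(S_\infty^{B,C}(b))$ to the range $[m_i \varepsilon_i(b),\, m_i \varepsilon_i(b) + m_i - 1]$. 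The refinement to the lower endpoint is built into Kashiwara's construction in \cite[Theorem 5.1]{Kas5}, and is automatic when $m_i = 1$.

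With this input, the $\Phi_{\bf i}$ identity follows by iterating the greedy algorithm defining the string parametrization. Writing $\Phi_{\bf i}(b) = (a_1, \ldots, a_r)$, the key equality yields $\varepsilon_{i_1}(S_\infty^{B,C}(b)) = m_{i_1} a_1$, and the iterated form of (ii) yields $\tilde{e}_{i_1}^{m_{i_1} a_1} S_\infty^{B,C}(b) = S_\infty^{B,C}(\tilde{e}_{i_1}^{a_1} b)$. Repeating the same two steps with $\tilde{e}_{i_1}^{a_1} b,\ \tilde{e}_{i_2}^{a_2} \tilde{e}_{i_1}^{a_1} b, \ldots$ in place of $b$ produces successive entries $m_{i_2} a_2, \ldots, m_{i_r} a_r$, so $\Phi_{\bf i}(S_\infty^{B,C}(b)) = (m_{i_1} a_1, \ldots, m_{i_r} a_r)$. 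Since $m_{i_k} = 2$ if $i_k \neq n$ and $m_{i_k} = 1$ if $i_k = n$, formula \eqref{explicit description of composite} identifies this tuple with $\Omega_{\bf i}^{A,C} \circ \Upsilon_{\bf i}^{B,A}(\Phi_{\bf i}(b))$. The computation for $\Phi_{\bf i}(S_\infty^{C,B}(b'))$ is the same with the roles of the $m_i$'s exchanged.

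For the $\Psi_{\bf i}$ identities, I would use $\Psi_{\bf i}(b) = \Phi_{{\bf i}^{\rm op}}(b^\ast)^{\rm op}$ together with the commutation $S_\infty^{B,C} \circ \ast = \ast \circ S_\infty^{B,C}$. Granted this, applying the already-established $\Phi$-identity to ${\bf i}^{\rm op}$ and $b^\ast$ and then reversing the resulting tuple yields $\Psi_{\bf i}(S_\infty^{B,C}(b))_k = m_{i_k}\, \Psi_{\bf i}(b)_k$, which again matches $\Omega_{\bf i}^{A,C} \circ \Upsilon_{\bf i}^{B,A}(\Psi_{\bf i}(b))$ by \eqref{explicit description of composite}. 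The commutation of the similarity with Kashiwara's involution is the analog for similarities of Proposition \ref{compatible with Kashiwara's involution}; it can be obtained by checking that both $\ast \circ S_\infty^{B,C}$ and $S_\infty^{B,C} \circ \ast$ satisfy the uniqueness characterization of the similarity formulated with respect to the $\ast$-Kashiwara operators. Cleanly establishing this $\ast$-compatibility, and rigorously pinpointing the exact value of $\varepsilon_i(S_\infty^{B,C}(b))$, are the principal technical obstacles; both are expected to follow from Kashiwara's construction in \cite{Kas5}, but neither is a formal consequence of the sparse defining conditions listed in Proposition \ref{similarity existence}.
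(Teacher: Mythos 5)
Your overall strategy coincides with the paper's: reduce the four identities, via the explicit formula \eqref{explicit description of composite}, to the scaling statements $\varepsilon_i(S_\infty^{B,C}(b)) = m_i\varepsilon_i(b)$ and $\varepsilon_i(S_\infty^{B,C}(b)^\ast) = m_i\varepsilon_i(b^\ast)$, compute $\Phi_{\bf i}$ by the greedy algorithm using condition (ii) of Corollary \ref{similarity existence B,C}, and deduce the $\Psi_{\bf i}$ identities through Kashiwara's involution. The problem is that the step you yourself flag as an obstacle --- pinning $\varepsilon_i(S_\infty^{B,C}(b))$ to the lower endpoint $2\varepsilon_i(b)$ of your interval when $i \neq n$ --- is exactly where the content of the proposition lies, and ``built into Kashiwara's construction'' is not an argument: as you admit, it is not a formal consequence of the defining conditions in Proposition \ref{similarity existence}, and you give no way to extract it. The paper closes this gap with a short self-contained trick that you are free to use: the composite $S_\infty^{C,B}\circ S_\infty^{B,C}$ equals the map $S_2^B$, which satisfies $\varepsilon_i(S_2^B(b)) = 2\varepsilon_i(b)$ for all $i$. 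If one had $\tilde{e}_i^{\,2\varepsilon_i(b)+1}S_\infty^{B,C}(b) \neq 0$ for some $i \in \{1,\ldots,n-1\}$, then applying $S_\infty^{C,B}$ --- which for these indices intertwines $\tilde{e}_i$ with exponent $1$, so no information is lost --- gives $\tilde{e}_i^{\,2\varepsilon_i(b)+1}S_2^B(b) \neq 0$, contradicting the doubling property of $S_2^B$; hence $\varepsilon_i(S_\infty^{B,C}(b)) = 2\varepsilon_i(b)$ exactly. The analogous argument (with $B$ and $C$ exchanged, and the critical index now $i=n$) handles $S_\infty^{C,B}$.

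On the $\ast$-side your position is closer to the paper's, which also outsources this point: it cites the proof of \cite[Theorem 5.1]{Kas5} directly for $\varepsilon_i(S_\infty^{B,C}(b)^\ast) = m_i\varepsilon_i(b^\ast)$, and that is a legitimate appeal because the construction there is carried out through embeddings compatible with the $\ast$-structure. However, your proposed alternative --- checking that $\ast\circ S_\infty^{B,C}$ and $S_\infty^{B,C}\circ\ast$ both satisfy ``the uniqueness characterization of the similarity formulated with respect to the $\ast$-Kashiwara operators'' --- is circular as stated: the uniqueness in Proposition \ref{similarity existence} is formulated with the ordinary Kashiwara operators, and verifying that $\ast\circ S_\infty^{B,C}\circ\ast$ satisfies condition (ii) for those operators is equivalent to the very $\ast$-equivariance you are trying to prove. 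Either cite \cite{Kas5} for the $\varepsilon_i(\,\cdot^\ast)$ scaling as the paper does (and note that the iteration for $\Psi_{\bf i}$ then needs $S_\infty^{B,C}$ to intertwine the $\ast$-Kashiwara operators with the same exponents, which is what the cited proof actually provides), or prove the $\ast$-compatibility honestly; as written, this step is asserted rather than established.
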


\begin{proof}
We prove the assertion only for $S_\infty ^{B, C}$; the proof of the assertion for $S_\infty ^{C, B}$ is similar. By equation (\ref{explicit description of composite}) in the proof of Theorem \ref{surjectivity}, it suffices to prove that 
\begin{align*}
&\varepsilon_i(S_\infty ^{B, C}(b)) = \begin{cases}
2\varepsilon_i (b)\quad(i = 1, \ldots, n-1),\\
\varepsilon_i (b)\quad\ (i = n),\end{cases}\\
&\varepsilon_i(S_\infty ^{B, C}(b)^\ast) = \begin{cases}
2\varepsilon_i (b^\ast)\quad(i = 1, \ldots, n-1),\\
\varepsilon_i (b^\ast)\quad\ (i = n)\end{cases} 
\end{align*}
for all $b \in \mathcal{B}^{B_n}(\infty)$. The assertion for $\varepsilon_i(S_\infty ^{B, C}(b)^\ast)$ follows immediately from the proof of \cite[Theorem 5.1]{Kas5}. We will prove the assertion for $\varepsilon_i(S_\infty ^{B, C}(b))$. If $i = n$, then this is obvious by condition (ii) in Corollary \ref{similarity existence B,C} (1). For $i = 1, \ldots, n-1$, we see by condition (ii) in Corollary \ref{similarity existence B,C} (1) that \[\tilde{e}_i ^{2 \varepsilon_i(b)}S_\infty ^{B, C}(b) = S_\infty ^{B, C}(\tilde{e}_i ^{\varepsilon_i(b)}b) \neq 0.\] Suppose, for a contradiction, that $\tilde{e}_i ^{2 \varepsilon_i(b)+1}S_\infty ^{B, C}(b) \neq 0$. Then we have 
\begin{align*}
\tilde{e}_i ^{2 \varepsilon_i(b)+1}S_2 ^{B}(b) &= \tilde{e}_i ^{2 \varepsilon_i(b)+1}S_\infty ^{C, B} \circ S_\infty ^{B, C}(b)\\
&= S_\infty ^{C, B}(\tilde{e}_i ^{2 \varepsilon_i(b)+1}S_\infty ^{B, C}(b))\quad({\rm by\ condition\ (ii)\ in\ Corollary}\ \ref{similarity existence B,C}\ {\rm (2)})\\
&\neq 0,
\end{align*}
which contradicts condition (iii) for $S^{B} _2$ above. Therefore, the equality $\tilde{e}_i ^{2 \varepsilon_i(b)+1}S_\infty ^{B, C}(b) = 0$ holds. From these, we deduce that $\varepsilon_i(S_\infty ^{B, C}(b)) = 2\varepsilon_i(b)$. This proves the proposition.
\end{proof}

\begin{rem}\normalfont
Proposition \ref{p:last proposition} is naturally extended to an arbitrary pair $((\mathfrak{g}, \omega), (\mathfrak{g}^\prime, \omega^\prime))$ satisfying conditions ${\rm (C)}_1$ and ${\rm (C)}_2$ in Section 5.
\end{rem}\vspace{2mm}

\appendix
\section{Case of affine Lie algebras}

Our arguments in this paper are naturally extended to symmetrizable Kac-Moody algebras. The following figures give the list of nontrivial pairs of automorphisms of simply-laced affine Dynkin diagrams satisfying conditions ${\rm (C)}_1$ and ${\rm (C)}_2$ in Section 5; we have used Kac's notation.
\begin{align*}
\xymatrix{                   
      & A_{2n-1} ^{(1)} \ar@{-}[rd]^-{\substack{{\rm fixed\ point}\\{\rm Lie\ subalgebra}}} &   &  &  & D_{n+1} ^{(1)} \ar@{-}[rd]^-{\substack{{\rm fixed\ point}\\{\rm Lie\ subalgebra}}} &   \\
D_{n+1} ^{(2)} \ar@{-}[ru]^-{\substack{{\rm orbit}\\{\rm Lie\ algebra}}} &                            & \ar@{-}[ld]^-{\substack{{\rm orbit}\\{\rm Lie\ algebra}}} C_n ^{(1)}, & & A_{2n-1} ^{(2)} \ar@{-}[ru]^-{\substack{{\rm orbit}\\{\rm Lie\ algebra}}} &                            & \ar@{-}[ld]^-{\substack{{\rm orbit}\\{\rm Lie\ algebra}}} B_n ^{(1)},\\
        & \ar@{-}[lu]^-{\substack{{\rm fixed\ point}\\{\rm Lie\ subalgebra}}} D_{n+2} ^{(1)}&   &                        &  & \ar@{-}[lu]^-{\substack{{\rm fixed\ point}\\{\rm Lie\ subalgebra}}} D_{2n} ^{(1)}&   
} 
\end{align*}
\begin{align*}
\xymatrix{                   
      & D_{2n} ^{(1)} \ar@{-}[rd]^-{\substack{{\rm fixed\ point}\\{\rm Lie\ subalgebra}}} &   &  &  & A_3 ^{(1)} \ar@{-}[rd]^-{\substack{{\rm fixed\ point}\\{\rm Lie\ subalgebra}}} &   \\
A_{2n-2} ^{(2)} \ar@{-}[ru]^-{\substack{{\rm orbit}\\{\rm Lie\ algebra}}} &                            & \ar@{-}[ld]^-{\substack{{\rm orbit}\\{\rm Lie\ algebra}}} A_{2n-2} ^{(2)}, & & A_1 ^{(1)} \ar@{-}[ru]^-{\substack{{\rm orbit}\\{\rm Lie\ algebra}}} &                            & \ar@{-}[ld]^-{\substack{{\rm orbit}\\{\rm Lie\ algebra}}} A_1 ^{(1)},\\
        & \ar@{-}[lu]^-{\substack{{\rm fixed\ point}\\{\rm Lie\ subalgebra}}} D_{2n} ^{(1)}&   &                        &  & \ar@{-}[lu]^-{\substack{{\rm fixed\ point}\\{\rm Lie\ subalgebra}}} A_3 ^{(1)}&   
} 
\end{align*}
\begin{align*}
\xymatrix{                   
      & E_6 ^{(1)} \ar@{-}[rd]^-{\substack{{\rm fixed\ point}\\{\rm Lie\ subalgebra}}} &   &  &  & D_4 ^{(1)} \ar@{-}[rd]^-{\substack{{\rm fixed\ point}\\{\rm Lie\ subalgebra}}} &   \\
E_6 ^{(2)} \ar@{-}[ru]^-{\substack{{\rm orbit}\\{\rm Lie\ algebra}}} &                            & \ar@{-}[ld]^-{\substack{{\rm orbit}\\{\rm Lie\ algebra}}} F_4 ^{(1)}, & & D_4 ^{(3)} \ar@{-}[ru]^-{\substack{{\rm orbit}\\{\rm Lie\ algebra}}} &                            & \ar@{-}[ld]^-{\substack{{\rm orbit}\\{\rm Lie\ algebra}}} G_2 ^{(1)}.\\
        & \ar@{-}[lu]^-{\substack{{\rm fixed\ point}\\{\rm Lie\ subalgebra}}} E_7 ^{(1)}&   &                        &  & \ar@{-}[lu]^-{\substack{{\rm fixed\ point}\\{\rm Lie\ subalgebra}}} E_6 ^{(1)}&   
} 
\end{align*}
\vspace{4mm}


\begin{thebibliography}{99}
\bibitem{And}
D. Anderson, Okounkov bodies and toric degenerations, Math.\ Ann.\ {\bf 356} (2013), 1183-1202.
\bibitem{BK}
A. Berenstein and D. Kazhdan, Geometric and unipotent crystals II: From unipotent bicrystals to crystal bases, in Quantum Groups, Contemp.~Math.~Vol.~433, Amer.\ Math.\ Soc., Providence, RI, 2007, 13-88.
\bibitem{BZ}
A. Berenstein and A. Zelevinsky, Tensor product multiplicities, canonical bases and totally positive varieties, Invent.\ Math.\ {\bf 143} (2001), 77-128.
\bibitem{FaFL}
X. Fang, G. Fourier, and P. Littelmann, Essential bases and toric degenerations arising from birational sequences, preprint 2015, arXiv:1510.02295v2.
\bibitem{FeFL}
E. Feigin, G. Fourier, and P. Littelmann, Favourable modules: filtrations, polytopes, Newton-Okounkov bodies and flat degenerations, preprint 2013, arXiv:1306.1292v5, to appear in Transform.\ Groups.
\bibitem{FRS}
J. Fuchs, U. Ray, and C. Schweigert, Some automorphisms of generalized Kac-Moody
algebras, J. Algebra {\bf 191} (1997), 518-540.
\bibitem{FSS}
J. Fuchs, B. Schellekens, and C. Schweigert, From Dynkin diagram symmetries to fixed point structures, Comm.\ Math.\ Phys.\ {\bf 180} (1996), 39-97.
\bibitem{Fuj}
N. Fujita, Newton-Okounkov bodies for Bott-Samelson varieties and string polytopes for generalized Demazure modules, preprint 2015, arXiv:1503.08916v2.
\bibitem{FN}
N. Fujita and S. Naito, Newton-Okounkov convex bodies of Schubert varieties and polyhedral realizations of crystal bases, Math.\ Z.\ {\bf 285} (2017), 325-352.
\bibitem{FO}
N. Fujita and H. Oya, A comparison of Newton-Okounkov polytopes of Schubert varieties, preprint 2016, arXiv:1610.08783v1.
\bibitem{GLS}
C. Geiss, B. Leclerc, and J. Schr\"{o}er, Preprojective algebras and cluster algebras, in Trends in Representation Theory of Algebras and Related Topics, EMS Ser.\ Congr.\ Rep., Eur.\ Math.\ Soc., Z\"{u}rich (2008), 253-283.
\bibitem{HK}
M. Harada and K. Kaveh, Integrable systems, toric degenerations, and Okounkov bodies, Invent.\ Math.\ {\bf 202} (2015), 927-985.
\bibitem{Kac}
V. G. Kac, Infinite-Dimensional Lie Algebras, 3rd ed., Cambridge University Press, Cambridge, 1990.
\bibitem{KOP1}
S. J. Kang, S. J. Oh, and E. Park, Perfect bases for integrable modules over generalized Kac-Moody algebras, Algebr.~Represent.~Theory {\bf 14} (2011), 571-587.
\bibitem{KOP2}
S. J. Kang, S. J. Oh, and E. Park, Categorification of quantum generalized Kac-Moody algebras and crystal bases, Internat.\ J. Math.\ {\bf 23} (2012), 1250116.
\bibitem{Kas1}
M. Kashiwara, Crystallizing the $q$-analogue of universal enveloping algebras, Comm.\ Math.\ Phys.\ {\bf 133} (1990), 249-260.
\bibitem{Kas2}
M. Kashiwara, On crystal bases of the $q$-analogue of universal enveloping algebras, Duke Math.\ J. {\bf 63} (1991), 465-516.
\bibitem{Kas3}
M. Kashiwara, Global crystal bases of quantum groups, Duke Math.\ J. {\bf 69} (1993), 455-485.
\bibitem{Kas4}
M. Kashiwara, The crystal base and Littelmann's refined Demazure character formula, Duke Math.\ J. {\bf 71} (1993), 839-858.
\bibitem{Kas5}
M. Kashiwara, Similarity of crystal bases, Contemp.\ Math.\ {\bf 194} (1996), 177-186.
\bibitem{Kav}
K. Kaveh, Crystal bases and Newton-Okounkov bodies, Duke Math.\ J. {\bf 164} (2015), 2461-2506.
\bibitem{KK1}
K. Kaveh and A. G. Khovanskii, Convex bodies and algebraic equations on affine varieties, preprint 2008, arXiv:0804.4095v1; a short version with title {\it Algebraic equations and convex bodies} appeared in Perspectives in Analysis, Geometry, and Topology, Progr.\ Math.~Vol.~296, Birkh\"{a}user, 2012, 263-282.
\bibitem{KK2}
K. Kaveh and A. G. Khovanskii, Newton-Okounkov bodies, semigroups of integral points, graded algebras and intersection theory, Ann.\ of Math.\ {\bf 176} (2012), 925-978.
\bibitem{Kir}
V. Kiritchenko, Newton-Okounkov polytopes of flag varieties, preprint 2015, arXiv:1506.00362v2, to appear in Transform.\ Groups.
\bibitem{Kum}
S. Kumar, Kac-Moody Groups, their Flag Varieties and Representation Theory, Progr.\ Math.~Vol.~204, Birkh\"{a}user, 2002.
\bibitem{LM}
R. Lazarsfeld and M. Mustata, Convex bodies associated to linear series, Ann.\ Sci.\ de I'ENS\ {\bf 42} (2009), 783-835.
\bibitem{Lus_can}
G. Lusztig, Canonical bases arising from quantized enveloping algebras, J.\ Amer.\ Math.\ Soc.\ {\bf 3} (1990), 447-498.
\bibitem{Lus_quivers}
G. Lusztig, Quivers, perverse sheaves, and quantized enveloping algebras, J.\ Amer.\ Math.\ Soc.\ {\bf 4} (1991), 365-421.
\bibitem{Lus1}
G. Lusztig, Introduction to Quantum Groups, Progr.\ Math.~Vol.~110, Birkh\"{a}user, 1993.
\bibitem{Lus2}
G. Lusztig, Semicanonical bases arising from enveloping algebras, Adv.\ Math.\ {\bf 151} (2000), 129-139.
\bibitem{Lit}
P. Littelmann, Cones, crystals, and patterns, Transform.\ Groups {\bf 3} (1998), 145-179.
\bibitem{Nak1}
T. Nakashima, Polyhedral realizations of crystal bases for integrable highest
weight modules, J.\ Algebra {\bf 219} (1999), 571-597.
\bibitem{Nak2}
T. Nakashima, Polytopes for crystallized Demazure modules and extremal vectors, Comm.\ Algebra, {\bf 30} (2002), 1349-1367.
\bibitem{NS1}
S. Naito and D. Sagaki, Crystal bases and diagram automorphisms, in Representation Theory of Algebraic Groups and Quantum Groups, Adv.\ Stud.\ Pure Math.\ Vol.\ 40, Math Soc.\ Japan, Tokyo, 2004, 321-341.
\bibitem{NS2}
S. Naito and D. Sagaki, Crystal base elements of an extremal weight module fixed by a diagram automorphism, Algebr.\ Represent.\ theory {\bf 8} (2005), 689-707.
\bibitem{NZ}
T. Nakashima and A. Zelevinsky, Polyhedral realizations of crystal bases for quantized
Kac-Moody algebras, Adv.\ Math.\ {\bf 131} (1997), 253-278.
\bibitem{Oko1}
A. Okounkov, Brunn-Minkowski inequality for multiplicities, Invent.\ Math.\ {\bf 125} (1996), 405-411.
\bibitem{Oko2}
A. Okounkov, Why would multiplicities be log-concave?, in The Orbit Method in Geometry and Physics, Progr.\ Math.~Vol.~213, Birkh\"{a}user, 2003, 329-347.
\bibitem{Sag}
D. Sagaki, Crystal bases, path models, and a twining character formula for Demazure modules, Publ.\ Res.\ Inst.\ Math.\ Sci.\ {\bf 38} (2002), 245-264.
\bibitem{Sav}
A. Savage, Quiver varieties and Demazure modules, Math.\ Ann.\ {\bf 335} (2006), 31-46.
\bibitem{Spr}
T. A. Springer, Linear Algebraic Groups, 2nd ed., Progr.\ Math.~Vol.~9, Birkh\"{a}user, 1998.
\end{thebibliography}
\end{document}